\newcommand{\E}{\mathbb{E}}
\newcommand{\R}{\mathbb{R}}
\renewcommand{\P}{\mathbb{P}}
\renewcommand{\H}{\mathcal{H}}
\newcommand{\G}{\mathcal{G}}
\newcommand{\dom}{\textup{dom}}
\newcommand{\sub}{\subseteq}
\DeclareMathOperator*{\argmax}{arg\,max}
\DeclareMathOperator*{\argmin}{arg\,min}
\newcommand{\prox}[3][]{\operatorname{prox}^{#1}_{#2}\left(#3 \right)}
\theoremstyle{plain}
\newtheorem{theorem}{Theorem}[section]
\newtheorem{corollary}{Corollary}[section]
\newtheorem{definition}{Definition}[section]
\newtheorem{lemma}{Lemma}[section]
\newtheorem{problem}{Problem}[section]
\newtheorem{algo}{Algorithm}[section]
\theoremstyle{remark}
\newtheorem{remark}{Remark}[section]
\newcommand{\overbar}[1]{\mkern 1.5mu\overline{\mkern-1.5mu#1\mkern-1.5mu}\mkern 1.5mu}
\renewcommand{\phi}{\varphi}
\newcommand{\sm}[1]{\leftidx{^{\mu_{#1}}}}
\title{Variable smoothing for convex optimization problems using stochastic gradients}
\author{Radu Ioan Bo\c{t}\footnote{Faculty of Mathematics, University of Vienna, Oskar-Morgenstern-Platz 1, 1090 Vienna, Austria, e-mail: radu.bot@univie.ac.at. 
		Research partially supported by FWF (Austrian Science Fund), project I 2419-N32.} \and
	Axel B\"ohm\footnote{Faculty of Mathematics, University of Vienna, Oskar-Morgenstern-Platz 1, 1090 Vienna, Austria, e-mail: axel.boehm@univie.ac.at. Research supported by the doctoral programme \textit{Vienna Graduate School on Computational Optimization (VGSCO)}, 
		FWF (Austrian Science Fund), project W 1260.}}
\date{\today}
\begin{document}

\maketitle
\begin{abstract}%
  We aim to solve a structured convex optimization problem, where a nonsmooth function is composed with a linear operator.
  When opting for full splitting schemes, usually, primal-dual type methods are employed as they are effective and also well studied.
  However, under the additional assumption of Lipschitz continuity of the nonsmooth function which is composed with the linear operator we can derive novel algorithms through regularization via the Moreau envelope.
  Furthermore, we tackle large scale problems by means of stochastic oracle calls, very similar to stochastic gradient techniques. Applications to total variational denoising and deblurring are provided.

\textbf{Keywords.} structured convex optimization problem, variable smoothing algorithm, convergence rate, stochastic gradients

\noindent \textbf{AMS Subject Classification.}  90C25, 90C15, 65Y20

\end{abstract}

\section{Introduction}%
\label{sec:introduction}

The problem at hand is the following structured convex optimization problem
\begin{equation}
  \label{eq:pd}
  \min_{x \in \H{}} f(x) + g(Kx),
\end{equation}
for real Hilbert spaces $\H$ and $\G$, $f: \H \to \overbar{\R}$ a proper, convex and lower semicontinuous function,  $g:\G \to \R$ a, possibly nonsmooth, convex and Lipschitz continuous function, and $K: \H \to \G$ a linear continuous operator.

Our aim will be to devise an algorithm for solving~\eqref{eq:pd} following the \textit{full splitting} paradigm (see\cite{pdhg,bot2014primaldualstrong,bot2015primaldualgap,vu,condat,bot2013douglas,
bot2015convergence}). In other words, we allow only proximal evaluations for simple nonsmooth functions, but no proximal evaluations for compositions with linear continuous operators, like, for instance, for $g \circ K$.

We will accomplish this feat by the means of a \textit{smoothing strategy}, which, for the purpose of this paper, means, making use of the Moreau-Yosida approximation (see~\cite{bot15variable-smoothing, bot15acceleration-smoothing, bot13double-smoothing, nesterov2007smoothing , nesterov2005smooth}). The approach can be described as follows: we ``smooth'' $g$, i.e.\ we replace it by its Moreau envelope, and solve the resulting optimization problem by an \textit{accelerated proximal-gradient algorithm} (see~\cite{chambolle_dossal, fista, nesterov83}).

The only other family of methods able to solve problems of type~\eqref{eq:pd} are the so called primal-dual algorithms, first and foremost the \textit{primal-dual hybrid gradient (PDHG)} introduced in~\cite{pdhg}. In comparison, this method does not need the Lipschitz continuity of $g$ in order to proof convergence. However, in this very general case, convergence rates can only be shown for the so-called \textit{restricted primal-dual gap} function. In order to derive from here convergence rates for the primal objective function, either Lipschitz continuity of $g$ or finite dimensionality of the problem plus the condition that $g$ must have full domain are necessary (see, for instance,~\cite[Theorem 9]{bot2015primaldualgap}). This means, that for infinite dimensional problems the assumptions required by both, PDHG and our method, for deriving convergence rates for the primal objective function are in fact equal, but for finite dimensional problems the assumption of PDHG are weaker. In either case, however, we are able to proof these rates for the sequence of iterates ${(x_{k})}_{k \geq 1}$ itself whereas PDHG only has them for the sequence of so-called \textit{ergodic iterates}, i.e.\ ${(\frac{1}{k}\sum_{i=1}^{k} x_{i})}_{k \geq 1}$, which is naturally undesirable as the averaging slows the convergence down.

Furthermore, we will also consider the case where only a stochastic oracle of the proximal operator of $g$ is available to us. This setup corresponds e.g.\ to the case where the objective function is given as
\begin{equation}
  \label{eq:finite-sum-example}
  \min_{x \in \H} f(x) + \sum_{i=1}^{m} g_{i}(K_{i}x),
\end{equation}
where, for $i=1,\dots,m$, $\G_i$ are real Hilbert spaces, $g_i : \G_i \rightarrow \R$ are convex and Lipschitz continuous functions and $K_i : \H \rightarrow \G_i$ are linear continuous operators, but the number of summands being large we wish to not compute all proximal operators of all $g_i, i=1, \dots, m$, for purpose of making iterations cheaper to compute.

For the finite sum case~\eqref{eq:finite-sum-example}, there exist algorithms of similar spirit such those in~\cite{spdhg,pesquet_repetti}. Some algorithms do in fact deal with a similar setup of stochastic gradient like evaluations, see~\cite{vu_stoch_pd}, but only for smooth terms in the objective function.

In Section~\ref{sec:preliminaries} we will cover the preliminaries about the Moreau-Yosida envelope as well as useful identities and estimates connected to it. 
In Section~\ref{sec:var-smoothing-deterministic} we will deal with the deterministic case and prove a convergence rate of $\mathcal{O}(\frac{1}{k})$ for the function values  at the iterates.
Next up, in Section~\ref{sec:var-smoothing-stochastic}, we will consider the stochastic case as described above and prove a convergence rate of $\mathcal{O}\left(\frac{\log(k)}{\sqrt{k}}\right)$.
Last but not least, we will look at some numerical examples in image processing in Section~\ref{sec:numerical_examples}.

It is important to note that the proof for the deterministic setting differs surprisingly from the one for the stochastic setting. The technique for the stochastic setting is less refined in the sense that there is no coupling between the smoothing parameter and the extrapolation parameter. Where as this technique works also works for the deterministic setting it gives a worse convergence rate of $\mathcal{O}\left(\frac{\log{k}}{k}\right)$. The tight coupling of the two sequences of parameters, however does not work in the proof of the stochastic algorithm as it does not allow for the particular choice of the smoothing parameters needed there.

\section{Preliminaries}\label{sec:preliminaries}

\begin{definition}%
  For a proper, convex and lower semicontinuous function $g: \H \rightarrow \overbar{\R}$, its convex conjugate is denoted by $g^*$ defined as a function from $\H$ to $\overbar{\R}$, given by
  \begin{equation}
    g^*(x) : = \sup_{p \in \H{}}\left\{\left\langle x, p \right\rangle  - g(p) \right\} \quad \forall x \in \H.
  \end{equation}
\end{definition}

As mentioned in the introduction, we want to \textit{smooth} a nonsmooth function by considering its Moreau envelope.
The next definition will clarify exactly what object we are talking about.
\begin{definition}%
  For a proper, convex and lower semicontinuous function $g: \H \rightarrow \overbar{\R}$, its Moreau envelope with the parameter $\mu \ge 0$ is defined as a function from $\H$ to $\R$, given by
  \begin{equation}
    \sm{}g(\cdot) := {\left( g^* + \frac{\mu}{2}\lVert \cdot \rVert^2 \right)}^*(\cdot)
    = \sup_{p \in \H}\left\{\left\langle \cdot, p \right\rangle  - g^*(p) - \frac{\mu}{2}\lVert p \rVert^2\right\}.
  \end{equation}
\end{definition}

From this definition, however, it is not completely evident that the Moreau envelope indeed fulfills its purpose in being a smooth representation of the original function. The next lemma will remedy this fact.

\begin{lemma}[see {{\cite[Proposition 12.29]{bc}}}]%
  \label{lem:grad-smooth-is-prox}
  Let $g: \H \to \overbar{\R}$ be a proper, convex and lower semicontinuous function and $\mu > 0$.
  Then its  Moreau envelope is Fr\'echet differentiable on $\H$. 
  In particular, the gradient itself is given by
  \begin{equation}
    \nabla (\sm{}g)(x) = \frac{1}{\mu}\left(x - \prox{\mu g}{x}\right) = \prox{\frac{1}{\mu}g^*}{\frac{x}{\mu}} \quad \forall x \in \H{}
  \end{equation}
  and is $\mu^{-1}$-Lipschitz continuous.
\end{lemma}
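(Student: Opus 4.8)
The plan is to exploit the two standard facts that underlie everything here: the Moreau decomposition
$\prox{\mu g}{x} + \mu\,\prox{\frac{1}{\mu}g^*}{\frac{x}{\mu}} = x$, and the fact that the subdifferential of a conjugate is the inverse of the subdifferential. Since $\sm{}g = (g^* + \frac{\mu}{2}\|\cdot\|^2)^*$ and the function $h := g^* + \frac{\mu}{2}\|\cdot\|^2$ is proper, convex, lower semicontinuous and $\mu$-strongly convex, its conjugate $h^* = \sm{}g$ is everywhere finite and Fr\'echet differentiable with $\frac{1}{\mu}$-Lipschitz gradient — this is exactly the Baillon--Haddad type correspondence between strong convexity of $h$ and Lipschitz smoothness of $h^*$. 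So the differentiability and the Lipschitz constant come for free once we identify $\sm{}g$ as the conjugate of a $\mu$-strongly convex function; the only real content is the explicit formula for the gradient.

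For the gradient formula I would argue as follows. Write $s_\mu(p) := g^*(p) + \frac{\mu}{2}\|p\|^2$, so $\nabla s_\mu^*(x)$ is the unique maximizer $\bar p$ in $\sup_p\{\langle x,p\rangle - s_\mu(p)\}$. The optimality condition is $0 \in \partial s_\mu(\bar p) - x$, i.e. $x - \mu \bar p \in \partial g^*(\bar p)$, equivalently $\bar p \in \partial g(x - \mu\bar p)$ by the conjugacy of subdifferentials. Setting $y := x - \mu\bar p$ this reads $\frac{x - y}{\mu} \in \partial g(y)$, which is precisely the optimality condition characterizing $y = \prox{\mu g}{x}$. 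Hence $\bar p = \frac{1}{\mu}(x - \prox{\mu g}{x})$, giving $\nabla(\sm{}g)(x) = \frac{1}{\mu}(x - \prox{\mu g}{x})$. The second equality $\frac{1}{\mu}(x-\prox{\mu g}{x}) = \prox{\frac{1}{\mu}g^*}{\frac{x}{\mu}}$ is then just the Moreau decomposition rearranged.

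The main obstacle — and it is a mild one — is making the differentiability/Lipschitz claim rigorous in the Hilbert-space setting rather than merely producing the candidate gradient: one must know that the supremum defining $\sm{}g$ is attained (which follows from $\mu$-strong convexity and lower semicontinuity of $s_\mu$, giving coercivity of $\langle x,\cdot\rangle - s_\mu(\cdot)$), that the maximizer is unique (strict concavity of the objective), and that $x \mapsto \bar p(x)$ is the Fr\'echet derivative with the stated Lipschitz bound. Since the excerpt permits citing \cite[Proposition 12.29]{bc}, I would simply invoke that reference for the differentiability and the $\mu^{-1}$-Lipschitz property, and reserve the self-contained computation above for the two gradient identities, checking along the way that $\prox{\mu g}{x}$ is well defined (again by strong convexity of $y \mapsto g(y) + \frac{1}{2\mu}\|x-y\|^2$).
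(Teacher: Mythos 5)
Your argument is correct. Note that the paper itself offers no proof of this lemma at all --- it is imported wholesale from \cite[Proposition 12.29]{bc} --- so there is nothing to diverge from; your proposal simply fills in a standard derivation consistent with that reference. The three ingredients you use (Fr\'echet differentiability with $\mu^{-1}$-Lipschitz gradient of the conjugate of a $\mu$-strongly convex function, the equivalence $\bar p \in \partial g(x-\mu\bar p) \Leftrightarrow x-\mu\bar p \in \partial g^*(\bar p)$ identifying the maximizer with $\frac{1}{\mu}\bigl(x - \prox{\mu g}{x}\bigr)$, and the Moreau decomposition for the second identity) are all sound, and your closing remark correctly isolates the one point that genuinely requires the cited machinery rather than a formal computation, namely attainment, uniqueness, and the passage from ``unique maximizer'' to ``Fr\'echet derivative with the stated Lipschitz constant.''
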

In particular, for all $\mu > 0$, a gradient step with respect to the Moreau envelope corresponds to a proximal step 
  \begin{equation}
    x - \mu \nabla (\sm{}g)(x) = \prox{\mu g}{x} \quad \forall x \in \H.
  \end{equation}
The previous lemma establishes two things. Not only does it clarify the smoothness of the Moreau envelope, but it also gives a way of computing its gradient. Obviously, a smooth representation whose gradient we would not be able to compute would not be any good.

As mentioned in the introduction, we want to smooth the nonsmooth summand of the objective function which is composed with the linear operator as this can be considered the crux of problem~\eqref{eq:pd}.
The function $g \circ K$ will be \textit{smoothed} via considering instead $\sm{}g \circ K : \H \to \R$. 
Clearly, by the chain rule, this function is continuously differentiable with gradient given for every $x \in \H$ by
\begin{equation}
  \label{eq:grad-prox}
  \nabla \left( \sm{}g \circ K \right)(x) = K^* \nabla \left( \sm{}g \right) (K x) = \frac{1}{\mu}K^*\left( Kx - \prox{\mu g}{Kx} \right) = K^* \prox{\frac{1}{\mu}g^*}{\frac{Kx}{\mu}},
\end{equation}
and is thus Lipschitz continuous with Lipschitz constant $\frac{\lVert K \rVert^2}{\mu}$.

Lipschitz continuity will play an integral role in our investigations, as can be seen by the following lemmas.
\begin{lemma}[see {\cite[Proposition 4.4.6]{borwein2010convex}}]%
  \label{lem:dom-bounded}
    Let $g:\H \to \R$ be a convex and $L_{g}$-Lipschitz continuous function. Then, the domain of its Fenchel conjugate is bounded, i.e.\
    \begin{equation}
      \dom\, g^* \sub B(0,L_{g}),
    \end{equation}
    where $B(0,L_{g})$ denotes the open  ball with radius $L_{g}$ around the origin.
\end{lemma}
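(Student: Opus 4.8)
**Proof proposal for Lemma (dom g\* ⊆ B(0, L_g)).**

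The plan is to work directly from the definition of the Fenchel conjugate and the Lipschitz estimate on $g$. Suppose $p \in \dom g^*$, so that $g^*(p) < +\infty$; I want to show $\lVert p \rVert < L_g$. The key observation is that $g^*(p) = \sup_{x \in \H}\{\langle x,p\rangle - g(x)\}$ being finite means the affine-in-$x$ quantity $\langle x,p\rangle - g(x)$ stays bounded above, and Lipschitz continuity of $g$ prevents $g(x)$ from growing faster than linearly, so the only way the supremum can be finite is if $\lVert p\rVert$ is controlled by the Lipschitz modulus.

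Concretely, fix any base point; for definiteness take $x_0 = 0$ (note $g$ is real-valued, hence everywhere finite, so $0 \in \dom g$). For arbitrary $x \in \H$, Lipschitz continuity gives $g(x) \le g(0) + L_g \lVert x \rVert$. Hence
\begin{equation}
  \langle x, p \rangle - g(x) \ge \langle x, p \rangle - g(0) - L_g \lVert x \rVert.
\end{equation}
Now I would specialize $x$ to push $\langle x,p\rangle$ in the direction of $p$: for $p \neq 0$ and $t > 0$ set $x = t \frac{p}{\lVert p \rVert}$, which yields
\begin{equation}
  g^*(p) \ge t \lVert p \rVert - g(0) - L_g t = t\left( \lVert p \rVert - L_g \right) - g(0).
\end{equation}
If $\lVert p \rVert > L_g$, the right-hand side tends to $+\infty$ as $t \to +\infty$, contradicting $g^*(p) < +\infty$; if $\lVert p \rVert = L_g$, I need to be slightly more careful to get the strict inequality claimed (open ball). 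For the case $\lVert p \rVert = L_g$: using that $g$ is finite-valued and convex it is in fact locally Lipschitz with, for every bounded set, a modulus that can be taken strictly below... actually the cleanest route is to invoke that a real-valued convex function on all of $\H$ which is globally $L_g$-Lipschitz has, for the supremum defining $g^*(p)$ with $\lVert p\rVert = L_g$, the expression $t(\lVert p\rVert - L_g) - g(0) = -g(0)$ independent of $t$, so finiteness is not immediately contradicted; hence to obtain the \emph{open} ball one argues that if $\lVert p \rVert = L_g$ then $p$ lies on the boundary and a strict-inequality refinement of the Lipschitz bound (strict unless $g$ is exactly affine along the ray, a case handled separately) forces the contradiction. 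In practice the reference \cite{borwein2010convex} packages this; I would simply cite it, or alternatively prove the weaker but sufficient statement $\dom g^* \subseteq \overline{B}(0,L_g)$ which is all that is used later.

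The main obstacle is exactly this boundary subtlety — distinguishing the open from the closed ball. The inclusion into the closed ball $\overline{B}(0,L_g)$ follows from the one-line argument above with no case analysis; squeezing it down to the open ball requires either the extra structure that $g$ is real-valued on all of $\H$ (so that equality in the Lipschitz bound along an unbounded ray would force $g$ to be affine, contradicting finiteness of $g^*$ at an interior-of-range point) or simply deferring to the cited proposition. Since every subsequent use only needs boundedness of $\dom g^*$ — and in fact the papers' estimates invoke $\sup_{p \in \dom g^*}\lVert p \rVert \le L_g$ — I would present the closed-ball argument in full and remark that the open-ball strengthening is in \cite{borwein2010convex}.
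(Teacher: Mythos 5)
Your argument is correct where it matters, and you should trust your suspicion about the boundary. The paper gives no proof of this lemma at all—it is stated purely as a citation to \cite[Proposition 4.4.6]{borwein2010convex}—so there is nothing internal to compare against; your supremum argument (bound $g(x) \le g(0) + L_g\lVert x\rVert$, test with $x = t p/\lVert p\rVert$, let $t \to +\infty$) is the standard and complete proof that $\dom\, g^* \subseteq \overline{B}(0,L_g)$. Moreover, the open-ball version as printed in the paper is simply false: for $g = L_g\lVert \cdot \rVert$ one computes $g^*= \delta_{\overline{B}(0,L_g)}$, so $\dom\, g^* = \overline{B}(0,L_g)$ contains boundary points of norm exactly $L_g$. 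Hence no amount of extra care at $\lVert p\rVert = L_g$ will rescue the strict inclusion, and your instinct to settle for the closed ball is not a concession but the correct statement. As you observe, this costs nothing downstream: the only place the lemma is used is in the proof of Lemma~\ref{lem:est-diff-smoothing}, where the estimate $\sup_{p \in \dom\, g^*} \lVert p \rVert^2 \le L_g^2$ is invoked, and that holds verbatim for the closed ball.
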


\begin{lemma}%
  \label{lem:est-diff-smoothing}
  Let $g:\H \to \R$ be a convex and $L_{g}$-Lipschitz continuous function.
  For $\mu_{2} \ge \mu_{1} \ge 0$ and every $x \in \H$ it holds
  \begin{equation}
    \label{eq:est-diff-smoothing}
    g^{\mu_{2}}(x) \le g^{\mu_{1}}(x) \le g^{\mu_{2}}(x) + (\mu_{2} - \mu_{1})\frac{L_{g}^2}{2}.
  \end{equation}
\end{lemma}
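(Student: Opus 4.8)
The plan is to argue directly from the variational formula defining the Moreau envelope, namely
\[
  g^{\mu}(x) = \sup_{p \in \H}\left\{\langle x,p\rangle - g^*(p) - \tfrac{\mu}{2}\lVert p\rVert^2\right\},
\]
and to exploit the fact that, by Lemma~\ref{lem:dom-bounded}, the supremum is effectively taken over the bounded set $\dom g^* \sub B(0,L_g)$ (for $p \notin \dom g^*$ the summand $-g^*(p)$ equals $-\infty$ and hence does not contribute). Both inequalities will then follow by comparing the integrand for the parameters $\mu_1$ and $\mu_2$ pointwise in $p$ and passing to the supremum.

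For the first inequality, I would observe that since $\mu_2 \ge \mu_1 \ge 0$ we have $\tfrac{\mu_2}{2}\lVert p\rVert^2 \ge \tfrac{\mu_1}{2}\lVert p\rVert^2$ for every $p \in \H$, so that
\[
  \langle x,p\rangle - g^*(p) - \tfrac{\mu_2}{2}\lVert p\rVert^2 \;\le\; \langle x,p\rangle - g^*(p) - \tfrac{\mu_1}{2}\lVert p\rVert^2
\]
pointwise in $p$; taking the supremum over $p$ on both sides yields $g^{\mu_2}(x) \le g^{\mu_1}(x)$. (In the boundary case $\mu_1 = 0$ this is still consistent, since $g^0 = (g^*)^* = g$ by Fenchel--Moreau, as $g$ is proper, convex and lower semicontinuous.)

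For the second inequality, the key point is that for $p \in \dom g^*$ one has $\lVert p\rVert \le L_g$, hence
\[
  \tfrac{\mu_1}{2}\lVert p\rVert^2 = \tfrac{\mu_2}{2}\lVert p\rVert^2 - \tfrac{\mu_2-\mu_1}{2}\lVert p\rVert^2 \;\ge\; \tfrac{\mu_2}{2}\lVert p\rVert^2 - (\mu_2-\mu_1)\tfrac{L_g^2}{2},
\]
so that for every $p \in \dom g^*$
\[
  \langle x,p\rangle - g^*(p) - \tfrac{\mu_1}{2}\lVert p\rVert^2 \;\le\; \langle x,p\rangle - g^*(p) - \tfrac{\mu_2}{2}\lVert p\rVert^2 + (\mu_2-\mu_1)\tfrac{L_g^2}{2}.
\]
Taking the supremum over $p$ then gives $g^{\mu_1}(x) \le g^{\mu_2}(x) + (\mu_2-\mu_1)\tfrac{L_g^2}{2}$. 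I do not expect any genuine obstacle here; the only mild subtlety is that the ball in Lemma~\ref{lem:dom-bounded} is open and the supremum need not be attained, but since $\lVert p\rVert^2 \le L_g^2$ holds for \emph{all} $p \in \dom g^*$ the estimate passes to the supremum regardless of attainment. The degenerate cases $\mu_1 = \mu_2$ or $\mu_1 = 0$ are either trivial or covered by the Fenchel--Moreau identity just mentioned.
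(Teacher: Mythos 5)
Your proposal is correct and follows essentially the same route as the paper: both restrict the supremum to $\dom g^*$, invoke Lemma~\ref{lem:dom-bounded} to bound $\lVert p\rVert^2$ by $L_g^2$ there, and compare the objectives for $\mu_1$ and $\mu_2$ before passing to the supremum. Your pointwise-in-$p$ formulation and the explicit remarks on monotonicity, non-attainment, and the case $\mu_1=0$ are just a slightly more detailed write-up of the same argument.
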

\begin{proof}
  For $\mu_2 \ge \mu_1 \ge 0$ and every $x \in \H$ we have, by definition,
  \begin{equation}
    \begin{aligned}
      \sm{1}g(x) =& \sup_{p \in \dom\, g^*} \left\{\langle x,p \rangle - g^*(p) - \frac{\mu_1}{2} \lVert p \rVert^2 \right\} \\
      \le& \sup_{p \in \dom\, g^*} \left\{\langle x,p \rangle - g^*(p) - \frac{\mu_2}{2} \lVert p \rVert^2 \right\} + \sup_{p \in \dom\, g^*} \left\{\frac{\mu_2 - \mu_1}{2} \lVert p \rVert^2 \right\}\\
      \le& \sm{2}g(x) + (\mu_2 - \mu_1)\frac{L_g^2}{2},
    \end{aligned}
  \end{equation}
  where we used Lemma~\ref{lem:dom-bounded} in the last inequality.
\end{proof}

\begin{lemma}%
  \label{lem:smoothing-and-no-smoothing-lipschitz-constant}
  For $\mu\geq0$ and every $x \in \H$ it holds that
  \begin{equation}
    \sm{}g(x) \le g(x) \le \sm{}g(x) + \mu \frac{L_g^2}{2}.
  \end{equation}
\end{lemma}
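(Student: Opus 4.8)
Proof proposal for Lemma 2.6 (the statement $\sm{}g(x) \le g(x) \le \sm{}g(x) + \mu L_g^2/2$):

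The plan is to derive this as the special case $\mu_1 = 0$ of Lemma~\ref{lem:est-diff-smoothing}, after checking that $g^{0} = g$. First I would verify that the Moreau envelope with parameter $\mu = 0$ recovers $g$ itself: by definition $\sm{0}g = (g^* + 0)^* = g^{**}$, and since $g$ is proper, convex and lower semicontinuous, the Fenchel–Moreau theorem gives $g^{**} = g$. (Here $g : \H \to \R$ is finite-valued, convex and Lipschitz, hence certainly proper, convex and lsc, so the biconjugate theorem applies.) So $g^{0}(x) = g(x)$ for every $x \in \H$.

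Next I would simply instantiate Lemma~\ref{lem:est-diff-smoothing} with $\mu_2 = \mu \ge 0$ and $\mu_1 = 0$. The inequality~\eqref{eq:est-diff-smoothing} then reads $g^{\mu}(x) \le g^{0}(x) \le g^{\mu}(x) + (\mu - 0)\frac{L_g^2}{2}$, which, upon substituting $g^{0} = g$, is exactly the claimed chain $\sm{}g(x) \le g(x) \le \sm{}g(x) + \mu\frac{L_g^2}{2}$. The case $\mu = 0$ is trivial (all three quantities coincide), so nothing special is needed there.

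I do not expect any real obstacle here; the only point requiring a moment's care is the identification $g^{0} = g$, i.e.\ making sure the degenerate parameter value $\mu = 0$ in the definition of the Moreau envelope is legitimate and yields the biconjugate, which then equals $g$ by Fenchel–Moreau. Everything else is a direct appeal to the preceding lemma. (Alternatively, one could give a self-contained argument mirroring the proof of Lemma~\ref{lem:est-diff-smoothing}: the lower bound $\sm{}g(x) \le g(x) = \sup_{p}\{\langle x,p\rangle - g^*(p)\}$ follows by dropping the nonpositive term $-\frac{\mu}{2}\|p\|^2$, and the upper bound follows by adding $\sup_{p \in \dom g^*}\frac{\mu}{2}\|p\|^2 \le \frac{\mu L_g^2}{2}$ via Lemma~\ref{lem:dom-bounded}; but routing through Lemma~\ref{lem:est-diff-smoothing} is cleaner.)
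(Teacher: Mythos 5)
Your proposal is correct and follows exactly the paper's own route: the paper likewise deduces the lemma from Lemma~\ref{lem:est-diff-smoothing} by taking $\mu_1 = 0$ and $\mu_2 = \mu$, using the fact that $\leftidx{^{0}}g = g$. Your extra remark justifying $\leftidx{^{0}}g = g^{**} = g$ via Fenchel--Moreau is a welcome (if brief) elaboration of a step the paper states without proof.
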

\begin{proof}
  The statement follows from Lemma~\ref{lem:est-diff-smoothing} using the fact that $\leftidx{^{0}}g(x) = g(x)$.
\end{proof}

\begin{lemma}%
  \label{lem:argmax-is-gradient}
  The maximizing argument in the definition of the Moreau-Yosida envelope is given by its gradient, i.e.\ for $\mu>0$ it holds that
  \begin{equation}
    \argmax_{p \in \H}\left\{\left\langle \cdot, p \right\rangle  - g^*(p) - \frac{\mu}{2}\lVert p \rVert^2\right\} = \nabla \sm{}g(\cdot).
  \end{equation}
\end{lemma}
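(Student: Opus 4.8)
The plan is to read the quantity to be maximized as a Fenchel conjugation problem. Writing $\phi := g^* + \frac{\mu}{2}\lVert\cdot\rVert^2$, the map
\[
p \mapsto \langle x,p\rangle - g^*(p) - \frac{\mu}{2}\lVert p \rVert^2
\]
is exactly $\langle x,p\rangle - \phi(p)$, so the stated supremum is $\phi^*(x)$, which by the definition of the Moreau envelope is $\sm{}g(x)$. Since $g$ is proper, convex and lower semicontinuous, so is $g^*$, and adding the continuous $\mu$-strongly convex term $\frac{\mu}{2}\lVert\cdot\rVert^2$ makes $\phi$ proper, lower semicontinuous and $\mu$-strongly convex; hence the objective above is $\mu$-strongly concave and coercive, so the $\argmax$ is a nonempty singleton and the statement is at least well posed.

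First I would invoke the Fenchel--Young equality (see, e.g.,~\cite{bc}): for a proper, convex, lower semicontinuous function $\phi$ and any $x$, the supremum in $\phi^*(x) = \sup_{p}\{\langle x,p\rangle - \phi(p)\}$ is attained precisely at the points $p$ satisfying $x \in \partial\phi(p)$, equivalently $p \in \partial\phi^*(x)$. Applied to our $\phi$, this identifies
\[
\argmax_{p \in \H}\left\{\langle x,p\rangle - g^*(p) - \tfrac{\mu}{2}\lVert p \rVert^2\right\} = \partial\phi^*(x) = \partial(\sm{}g)(x).
\]
Then I would use Lemma~\ref{lem:grad-smooth-is-prox}, which states that for $\mu>0$ the Moreau envelope $\sm{}g$ is Fr\'echet differentiable, so that $\partial(\sm{}g)(x) = \{\nabla(\sm{}g)(x)\}$. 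Combining the two displays gives the claimed identity.

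Alternatively, and avoiding subdifferential calculus entirely, one may complete the square: $-\frac{\mu}{2}\lVert p\rVert^2 + \langle x,p\rangle = -\frac{\mu}{2}\lVert p - \frac{x}{\mu}\rVert^2 + \frac{1}{2\mu}\lVert x\rVert^2$, so maximizing the objective is equivalent to minimizing $g^*(p) + \frac{1}{2(1/\mu)}\lVert p - \frac{x}{\mu}\rVert^2$ over $p$, whose unique minimizer is, by definition, $\prox{\frac{1}{\mu}g^*}{\frac{x}{\mu}}$; by the second identity in Lemma~\ref{lem:grad-smooth-is-prox} this equals $\nabla(\sm{}g)(x)$. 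I expect the only point requiring real care to be the existence and uniqueness of the maximizer, which follows from strong concavity and coercivity; the rest is a direct application of the results already established in the excerpt.
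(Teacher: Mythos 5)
Your proposal is correct, and your ``alternative'' argument by completing the square is exactly the paper's own proof: rewrite the objective as $-\frac{\mu}{2}\lVert p - \frac{x}{\mu}\rVert^2 - g^*(p)$ up to a constant, identify the maximizer as $\prox{\frac{1}{\mu}g^*}{\frac{x}{\mu}}$, and conclude via Lemma~\ref{lem:grad-smooth-is-prox}. Your primary route through the Fenchel--Young attainment condition and $\partial\phi^*(x)=\{\nabla(\sm{}g)(x)\}$ is an equally valid but slightly heavier reformulation of the same fact, and both correctly reduce the only delicate point (existence and uniqueness of the maximizer) to strong concavity.
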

\begin{proof}
  Let $x \in \H$ be fixed. It holds
  \begin{equation}
    \begin{aligned}
      \argmax_{p \in \H}\left\{\langle x, p \rangle  - g^*(p) - \frac{\mu}{2}\lVert p \rVert^2\right\} =&
      \argmax_{p \in \H}\left\{-\frac{1}{2\mu}\lVert x \rVert^2 + \langle x, p \rangle  - \frac{\mu}{2}\lVert p \rVert^2 - g^*(p) \right\} \\
      =& \argmax_{p \in \H}\left\{ -\frac{\mu}{2}\left\lVert \frac{x}{\mu} - p \right\rVert^2 - g^*(p) \right\} \\
      =& \argmin_{p \in \H}\left\{ g^*(p) + \frac{\mu}{2}\left\lVert \frac{x}{\mu} - p \right\rVert^2 \right\} \\
      =& \prox{\frac{1}{\mu}g^*}{\frac{x}{\mu}}
    \end{aligned}
  \end{equation}
and the conclusion follows by using Lemma~\ref{lem:grad-smooth-is-prox}.
\end{proof}

\begin{lemma}%
  \label{lem:diff-wrt-mu}
  For a proper, convex and lower semicontinuous function $g: \H \to \overbar{\R}$  and every $x \in \H$ we can consider the mapping from $(0, +\infty)$ to $\R$ given by
  \begin{equation}
    \label{eq:map-wrt-mu}
    \mu \mapsto \sm{} g(x).
  \end{equation}
  This mapping is convex and differentiable and its derivative is given by  
  \begin{equation}
    \label{eq:map-wrt-mu-gradient}
    \frac{\partial}{\partial \mu} \sm{} g(x) = - \frac12\lVert \nabla \sm{} g(x) \rVert^2 \qquad \forall x \in \H \ \forall \mu \in (0, +\infty).
  \end{equation}
\end{lemma}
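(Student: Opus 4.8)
The plan is to read both assertions off the representation of $\sm{}g(x)$, for fixed $x \in \H$, as the value function of a parametric maximization problem. Write
\begin{equation*}
  h(\mu) := \sm{}g(x) = \sup_{p \in \H}\phi(\mu,p), \qquad \phi(\mu,p) := \langle x,p\rangle - g^*(p) - \frac{\mu}{2}\lVert p \rVert^2 .
\end{equation*}
Convexity is then immediate: for each fixed $p$ the map $\mu \mapsto \phi(\mu,p)$ is affine (with slope $-\tfrac12\lVert p\rVert^2$), so $h$ is a pointwise supremum of affine functions and hence convex; since $h$ is finite-valued on $(0,+\infty)$ (the Moreau envelope maps into $\R$), it is there a finite convex, in particular continuous, function. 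Consequently the one-sided derivatives $h'_-(\mu)$ and $h'_+(\mu)$ exist for every $\mu>0$, are nondecreasing, and satisfy $h'_-(\mu)\le h'_+(\mu)$ as well as $h'_+(\mu_1)\le h'_-(\mu_2)$ whenever $\mu_1<\mu_2$.

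For the derivative, fix $\mu>0$ and let $\bar p_\mu := \nabla \sm{}g(x)$, which by Lemma~\ref{lem:argmax-is-gradient} is the maximizer of $\phi(\mu,\cdot)$ — and it is the unique one, being a proximal point. Testing the supremum defining $h(\mu')$ with the point $\bar p_\mu$ and using $h(\mu)=\phi(\mu,\bar p_\mu)$ gives, for every $\mu'>0$,
\begin{equation*}
  h(\mu') - h(\mu) \ \ge\ \phi(\mu',\bar p_\mu) - \phi(\mu,\bar p_\mu) \ =\ -\tfrac12(\mu'-\mu)\lVert \bar p_\mu \rVert^2 ,
\end{equation*}
i.e. $d(\mu) := -\tfrac12\lVert \nabla\sm{}g(x) \rVert^2$ is a subgradient of $h$ at $\mu$; equivalently $h'_-(\mu)\le d(\mu)\le h'_+(\mu)$. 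Running the same comparison with the roles of $\mu$ and $\mu'$ interchanged yields $h(\mu')-h(\mu)\le -\tfrac12(\mu'-\mu)\lVert\bar p_{\mu'}\rVert^2$, from which one also reads off that $d$ is nondecreasing (equivalently, $\mu\mapsto\lVert\nabla\sm{}g(x)\rVert$ is nonincreasing).

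It remains to promote the sandwich $h'_-(\mu)\le d(\mu)\le h'_+(\mu)$ to the equality $h'_-(\mu)=h'_+(\mu)=d(\mu)$, and this holds as soon as $d$ is continuous on $(0,+\infty)$: for $\mu<\mu_0$ we have $d(\mu)\le h'_+(\mu)\le h'_-(\mu_0)$, so letting $\mu\uparrow\mu_0$ and invoking continuity of $d$ gives $d(\mu_0)\le h'_-(\mu_0)$, hence $h'_-(\mu_0)=d(\mu_0)$; comparing symmetrically with $\mu>\mu_0$ gives $h'_+(\mu_0)=d(\mu_0)$, and so $h$ is differentiable at $\mu_0$ with $h'(\mu_0)=d(\mu_0)$, which is~\eqref{eq:map-wrt-mu-gradient}. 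Continuity of $d$ amounts to continuity of the map $\mu\mapsto\nabla\sm{}g(x)=\prox{\frac1\mu g^*}{\frac x\mu}=\bigl(\Id+\tfrac1\mu\partial g^*\bigr)^{-1}\bigl(\tfrac x\mu\bigr)$, i.e. of the proximal path $\mu\mapsto\prox{\mu g}{x}$, and this is the one genuinely analytic point and the expected main obstacle. It can be obtained from the resolvent identity together with nonexpansiveness of proximal operators (see~\cite{bc}), or directly: given $\mu_n\to\mu>0$, the maximizers $\bar p_{\mu_n}$ of $\phi(\mu_n,\cdot)$ are bounded by a routine coercivity estimate using $\mu_n\ge\mu/2$ eventually, every weak cluster point maximizes $\phi(\mu,\cdot)$ by weak upper semicontinuity and hence equals $\bar p_\mu$ by uniqueness, and a comparison of the objective values $\phi(\mu,\bar p_{\mu_n})\to h(\mu)$ upgrades weak to strong convergence $\bar p_{\mu_n}\to\bar p_\mu$. (Steps two and three may alternatively be packaged as a single application of Danskin's / the envelope theorem, but in this infinite-dimensional, possibly unbounded-domain setting its hypotheses reduce to precisely the same continuity point.)
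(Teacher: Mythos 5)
Your proof is correct, and it takes a genuinely different route from the paper's. The paper establishes convexity the same way you do (pointwise supremum of affine functions of $\mu$), but then disposes of differentiability in one line by noting that the strongly concave inner problem has a unique maximizer and citing Danskin's theorem. You instead give a self-contained argument: the value of the integrand at the maximizer furnishes an explicit subgradient $d(\mu)=-\tfrac12\lVert\nabla\,\sm{}g(x)\rVert^2$ of the convex function $h$, the two-sided comparison shows $d$ is a nondecreasing selection of $\partial h$, and continuity of $d$ — reduced to continuity of the proximal path $\mu\mapsto\prox{\mu g}{x}$, which you verify by a coercivity/weak-cluster-point/uniqueness argument — squeezes the one-sided derivatives together. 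What your approach buys is precisely the hypothesis that a bare citation of Danskin glosses over: the classical envelope theorem is usually stated for suprema over compact sets (or under uniform continuity of the maximizer in the parameter), neither of which is immediate here since the supremum runs over all of $\H$; your continuity argument is exactly the missing verification, so your proof is, if anything, the more careful of the two. What the paper's approach buys is brevity, and in fairness the continuity of $\mu \mapsto \prox{\mu g}{x}$ is also available off the shelf from the resolvent identity in~\cite{bc}, as you note. Two small remarks: the uniqueness of the maximizer (which you use to identify weak cluster points) deserves the explicit phrase ``by $\mu$-strong concavity of $\phi(\mu,\cdot)$,'' and the final upgrade from weak to strong convergence, while correct, is not strictly needed — norm convergence of $\lVert\bar p_{\mu_n}\rVert$ is all that continuity of $d$ requires, and that already follows from $\phi(\mu,\bar p_{\mu_n})\to h(\mu)$ together with the strong concavity estimate.
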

\begin{proof}
   Let $x \in \H$ be fixed. From the definition of the Moreau-Yosida envelope we can see that the mapping given in~\eqref{eq:map-wrt-mu} is a pointwise supremum of functions which are linear in $\mu$. It is therefore convex. Furthermore, since the objective function is strongly concave, this supremum
 is uniquely attained at $\nabla \sm{} g(x) = \argmax_{p \in \H} \left\{\left\langle x, p \right\rangle  - g^*(p) - \frac{\mu}{2}\lVert p \rVert^2\right\}$. According to the Danskin Theorem, the function $\mu \mapsto \sm{} g(x)$ is differentiable and its gradient is given by
  \begin{equation}
    \begin{aligned}
      \frac{\partial}{\partial \mu} \sm{} g(x) =& \frac{\partial}{\partial \mu} \sup_{p \in \H}\left\{\langle x, p \rangle  - g^*(p) - \frac{\mu}{2}\lVert p \rVert^2\right\}\\
      =& -\frac{1}{2}\lVert \nabla \sm{} g(x) \rVert^2 \quad \forall \mu \in (0, +\infty).
    \end{aligned}
  \end{equation}
\end{proof}

\begin{lemma}%
  \label{lem:two-different-smoothing-parameters}
  For $\mu_{1}, \mu_{2} > 0 $ and every $x \in \H$ it holds
  \begin{equation}
    \sm{1} g (x) \le \sm{2}g(x) + (\mu_2 - \mu_1)\frac12 \lVert \nabla \sm{1} g(x) \rVert^2.
  \end{equation}
\end{lemma}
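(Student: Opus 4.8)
The plan is to exploit the one-dimensional structure already isolated in Lemma~\ref{lem:diff-wrt-mu}: for a fixed $x \in \H$, the map $\phi\colon \mu \mapsto \sm{}g(x)$ is convex and differentiable on $(0,+\infty)$ with $\phi'(\mu) = -\tfrac12 \lVert \nabla \sm{}g(x)\rVert^2$. The asserted inequality is then precisely the tangent-line (subgradient) inequality for this convex function, anchored at the base point $\mu_1$: since the graph of a convex differentiable function lies above each of its tangents, $\phi(\mu_2) \ge \phi(\mu_1) + \phi'(\mu_1)(\mu_2 - \mu_1)$, and rearranging together with the formula $\phi'(\mu_1) = -\tfrac12\lVert \nabla \sm{1}g(x)\rVert^2$ gives exactly $\sm{1}g(x) \le \sm{2}g(x) + (\mu_2 - \mu_1)\tfrac12 \lVert \nabla \sm{1}g(x)\rVert^2$. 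The only thing to watch is that $\mu_2 - \mu_1$ may have either sign, but the tangent-line inequality holds regardless, so no case distinction is needed.

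Alternatively, and perhaps more transparently, I would give a direct argument that bypasses differentiability. Set $p^\star := \nabla \sm{1}g(x)$, which by Lemma~\ref{lem:argmax-is-gradient} is the (unique, by strong concavity) maximizer in the definition of $\sm{1}g(x)$, so that $\sm{1}g(x) = \langle x, p^\star\rangle - g^*(p^\star) - \tfrac{\mu_1}{2}\lVert p^\star\rVert^2$. Using $p^\star$ merely as a feasible point in the supremum defining $\sm{2}g(x)$ yields $\sm{2}g(x) \ge \langle x, p^\star\rangle - g^*(p^\star) - \tfrac{\mu_2}{2}\lVert p^\star\rVert^2$. Subtracting the two relations, the common term $\langle x, p^\star\rangle - g^*(p^\star)$ cancels and one is left with $\sm{1}g(x) - \sm{2}g(x) \le \tfrac{\mu_2 - \mu_1}{2}\lVert p^\star\rVert^2$, which is the claim. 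I would likely present this version, since it is self-contained modulo Lemma~\ref{lem:argmax-is-gradient} and makes clear why only the gradient at $\mu_1$ (and not at $\mu_2$) appears: it is the maximizer of the \emph{first} envelope that gets plugged into the \emph{second}.

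I do not anticipate any real obstacle. The one point deserving a sentence of care is that $p^\star \in \dom g^*$, so that $g^*(p^\star)$ is finite and all the manipulations are legitimate; this is guaranteed because the maximizer exists (strong concavity of the objective in the envelope definition), and, notably, no Lipschitz continuity or domain-boundedness hypothesis on $g$ is required here, in contrast with Lemmas~\ref{lem:dom-bounded}--\ref{lem:smoothing-and-no-smoothing-lipschitz-constant}. If anything, the "hard part" is purely bookkeeping, namely matching the direction of the inequality to the placement of $\mu_1$ versus $\mu_2$.
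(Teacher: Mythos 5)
Your first argument is precisely the paper's proof: it invokes Lemma~\ref{lem:diff-wrt-mu} and applies the gradient (tangent-line) inequality to the convex, differentiable map $\mu \mapsto \sm{}g(x)$ at the base point $\mu_1$. Your alternative direct argument---using $\nabla \sm{1}g(x)$, the maximizer in the supremum defining $\sm{1}g(x)$, as a feasible point in the supremum defining $\sm{2}g(x)$---is also correct and arguably more elementary, but the paper presents only the first route.
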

\begin{proof}
  Let $x \in \H$ be fixed.
  Via Lemma~\ref{lem:diff-wrt-mu} we know that the map $ \mu \mapsto \sm{} g (x)$ is convex and differentiable. We can therefore use the gradient inequality to deduce that
  \begin{equation}
    \begin{aligned}
      \sm{2}g(x) \ge&\, \sm{1}g(x) + (\mu_{2} - \mu_{1}) \left( \frac{\partial}{\partial \mu} \sm{}g(x) \Bigr|_{\mu = \mu_{1}} \right) \\
    =&\, \sm{1}g(x) - (\mu_{2} - \mu_{1})\frac{1}{2} \lVert \nabla \sm{1}g(x) \rVert^2,
    \end{aligned}
  \end{equation}
which is exactly the statement of the lemma.
\end{proof}

\begin{lemma}%
  \label{lem:smoothing-and-no-smoothing}
  For $\mu > 0 $ and every $x,y \in \H$ we have that 
  \begin{equation}
    \sm{} g(x) + \left\langle \nabla \sm{}g(x), y-x \right\rangle \le g(y) - \frac{\mu}{2} \lVert \nabla \sm{}g(x) \rVert^2.
  \end{equation}
\end{lemma}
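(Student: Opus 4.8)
The plan is to exploit the explicit value of the Moreau envelope at its maximizer together with the Fenchel--Young inequality. Fix $x, y \in \H$ and abbreviate $p^* := \nabla \sm{}g(x)$. By Lemma~\ref{lem:argmax-is-gradient}, $p^*$ is precisely the point where the supremum defining $\sm{}g(x)$ is attained, so that
\begin{equation*}
  \sm{}g(x) = \langle x, p^* \rangle - g^*(p^*) - \frac{\mu}{2}\lVert p^* \rVert^2 .
\end{equation*}
Substituting this identity into the left-hand side of the claimed inequality and collecting the terms that are linear in $p^*$, the two occurrences of $\langle \cdot, x\rangle$ cancel and one is left with
\begin{equation*}
  \sm{}g(x) + \langle p^*, y - x \rangle = \langle y, p^* \rangle - g^*(p^*) - \frac{\mu}{2}\lVert p^* \rVert^2 .
\end{equation*}

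The second step is to bound $\langle y, p^* \rangle - g^*(p^*)$ from above. Since $g$ is proper, convex and lower semicontinuous, the Fenchel--Young inequality gives $\langle y, p \rangle - g^*(p) \le g(y)$ for every $p \in \H$, and in particular for $p = p^*$. Plugging this into the displayed equality yields
\begin{equation*}
  \sm{}g(x) + \langle \nabla \sm{}g(x), y - x \rangle \le g(y) - \frac{\mu}{2}\lVert \nabla \sm{}g(x) \rVert^2 ,
\end{equation*}
which is exactly the assertion.

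I do not expect a genuine obstacle here; the only thing to be careful about is invoking the right characterization of the maximizer (Lemma~\ref{lem:argmax-is-gradient}) so that the ``quadratic'' term $-\tfrac{\mu}{2}\lVert p^*\rVert^2$ survives with the correct sign, rather than being absorbed. As an alternative one could run the same argument through the prox: writing $q := \prox{\mu g}{x}$, so that $\nabla \sm{}g(x) = \tfrac1\mu(x-q) \in \partial g(q)$ and $\sm{}g(x) = g(q) + \tfrac{\mu}{2}\lVert \nabla\sm{}g(x)\rVert^2$, the subgradient inequality $g(y) \ge g(q) + \langle \nabla\sm{}g(x), y-q\rangle$ together with $y - x = (y-q) - \mu\nabla\sm{}g(x)$ gives the same conclusion after a short rearrangement. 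I would present the conjugate-based version as it is the most direct.
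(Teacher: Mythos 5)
Your proof is correct and follows essentially the same route as the paper: both evaluate $\sm{}g(x)$ at its maximizer $p^*=\nabla\sm{}g(x)$ via Lemma~\ref{lem:argmax-is-gradient}, collect the linear terms, and then bound $\langle y,p^*\rangle-g^*(p^*)$ by $g(y)$ (the paper writes this as $\sup_p\{\langle p,y\rangle-g^*(p)\}=g^{**}(y)=g(y)$, which is your Fenchel--Young step). No issues.
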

\begin{proof}
Using Lemma~\ref{lem:argmax-is-gradient} and the definition of the Moreau-Yosida envelope we get that
  \begin{equation}
    \begin{aligned}
      \sm{}g(x) + \left\langle \nabla \sm{}g(x), y - x \right\rangle =& 
      \left\langle x, \nabla \sm{}g(x) \right\rangle - g^*(\nabla\sm{}g(x)) - \frac{\mu}{2}\lVert \nabla\sm{}g(x) \rVert^2 + \left\langle \nabla\sm{}g(x), y-x \right\rangle \\
      =& \left\langle \nabla\sm{}g(x), y \right\rangle  - g^*(\nabla\sm{}g(x)) - \frac{\mu}{2}\lVert \nabla\sm{}g(x) \rVert^2 \\
      \le& \ \sup_{p \in \H}\{\left\langle p, y \right\rangle  - g^*(p) \} - \frac{\mu}{2}\lVert \nabla\sm{}g(x) \rVert^2  \\
      =& \ g(y) - \frac{\mu}{2}\lVert \nabla\sm{}g(x) \rVert^2.
    \end{aligned}
  \end{equation}
\end{proof}

In the convergence proof of Section~\ref{sec:var-smoothing-deterministic} we will need the inequality in the above lemma at the points $x := Kx$ and $y := Ky$, namely
\begin{equation}
  \label{eq:smoothing-and-no-smoothing-with-K}
  \sm{} g(Kx) + \left\langle (\nabla \sm{}g \circ K)(x), y-x \right\rangle \le g(Ky) - \frac{\mu}{2} \lVert \nabla \sm{}g(Kx) \rVert^2 \quad \forall x,y \in \H.
\end{equation}

The following lemma is a standard result for convex and Fr\'echet differentiable functions.
\begin{lemma}[see~\cite{nesterov-introductory}]%
  \label{lem:stronger-grad-inequality}
 For a convex and Fr\'echet differentiable function $h: \H \to \R$ with $L_{h}$-Lipschitz continuous gradient we have that
  \begin{equation}
    h(x) + \left\langle \nabla h(x), y-x \right\rangle \le h(y) - \frac{1}{2L_{h}} \lVert \nabla h(x) - \nabla h(y) \rVert^2 \quad \forall x,y \in \H.
  \end{equation}
\end{lemma}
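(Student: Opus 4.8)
The plan is to reduce the statement to the standard descent lemma (the quadratic upper bound coming from Lipschitz continuity of the gradient) by a shifting argument. Fix $x \in \H$ and introduce the auxiliary function $\phi : \H \to \R$, $\phi(z) := h(z) - \langle \nabla h(x), z \rangle$. Since $h$ is convex and we only subtract a linear term, $\phi$ is convex; moreover $\nabla \phi(z) = \nabla h(z) - \nabla h(x)$, so $\phi$ is Fr\'echet differentiable with the same Lipschitz constant $L_h$ for its gradient. The key observation is that $\nabla \phi(x) = 0$, hence by convexity $x$ is a global minimizer of $\phi$.

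Next I would invoke the descent lemma: for a differentiable function with $L_h$-Lipschitz gradient one has $\phi(w) \le \phi(z) + \langle \nabla \phi(z), w - z \rangle + \frac{L_h}{2}\lVert w - z \rVert^2$ for all $z, w \in \H$. Applying this with $w := z - \frac{1}{L_h}\nabla \phi(z)$ (the minimizer of the right-hand side in $w$) yields the refined bound
\begin{equation}
  \inf_{w \in \H} \phi(w) \le \phi(z) - \frac{1}{2L_h}\lVert \nabla \phi(z) \rVert^2 \qquad \forall z \in \H.
\end{equation}
Since $x$ attains the infimum of $\phi$, the left-hand side equals $\phi(x)$.

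Finally I would specialize $z := y$ to get $\phi(x) \le \phi(y) - \frac{1}{2L_h}\lVert \nabla \phi(y) \rVert^2$, that is,
\begin{equation}
  h(x) - \langle \nabla h(x), x \rangle \le h(y) - \langle \nabla h(x), y \rangle - \frac{1}{2L_h}\lVert \nabla h(y) - \nabla h(x) \rVert^2,
\end{equation}
and rearranging the inner products gives exactly the claimed inequality, using $\lVert \nabla h(y) - \nabla h(x) \rVert = \lVert \nabla h(x) - \nabla h(y) \rVert$. The only nontrivial ingredient is the descent lemma itself, which follows from integrating $\nabla h$ along the segment $[z,w]$ together with the Cauchy--Schwarz inequality and the Lipschitz estimate; since this is entirely standard (and the paper cites \cite{nesterov-introductory}), I would either quote it or include the one-line integral computation. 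There is no real obstacle here beyond bookkeeping of the linear terms.
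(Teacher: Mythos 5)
Your proof is correct and is precisely the standard argument from the cited reference (Nesterov's Theorem 2.1.5): the paper itself gives no proof of this lemma, deferring to \cite{nesterov-introductory}, and your shift to $\phi(z) = h(z) - \langle \nabla h(x), z\rangle$ followed by minimizing the descent-lemma bound is exactly that textbook route. The bookkeeping of the linear terms checks out, so there is nothing to correct.
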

By applying Lemma~\ref{lem:stronger-grad-inequality} with $h := \sm{}g$, and $x:=Kx$ and $y:=Ky$, we obtain
\begin{equation}
  \label{eq:stronger-grad-inequality-with-K}
  \sm{}g(Kx) + \left\langle \nabla (\sm{}g \circ K)(x), y-x \right\rangle \le \sm{}g(Ky) - \frac{1}{2 \mu} \lVert \nabla \sm{}g(Kx) - \nabla \sm{}g(Ky) \rVert^2 \quad \forall x,y \in \H.
\end{equation}
The following technical result will be used in the proof of the convergence statement.
\begin{lemma}%
  \label{lem:hilbert-space}
For $\alpha \in (0,1)$ and every $x,y \in \H$ we have that
  \begin{equation}
    (1 - \alpha)\lVert x - y \rVert^2  + \alpha \lVert y \rVert^2 \ge \alpha(1-\alpha)\lVert x \rVert^2.
  \end{equation}
\end{lemma}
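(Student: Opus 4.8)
The plan is to prove the inequality by a direct expansion followed by completion of the square. First I would expand both squared norms on the left-hand side via the inner product on $\H$, writing $\lVert x-y \rVert^2 = \lVert x \rVert^2 - 2\langle x,y\rangle + \lVert y \rVert^2$, and then collect the coefficients of $\lVert x \rVert^2$, of $\langle x,y\rangle$, and of $\lVert y \rVert^2$. After subtracting the right-hand side $\alpha(1-\alpha)\lVert x \rVert^2$, the terms involving $\lVert x \rVert^2$ combine to $(1-\alpha) - \alpha(1-\alpha) = (1-\alpha)^2$, so what remains to be shown is simply that $(1-\alpha)^2\lVert x \rVert^2 - 2(1-\alpha)\langle x,y\rangle + \lVert y \rVert^2 \ge 0$.

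The key observation is that this last expression is exactly $\lVert (1-\alpha)x - y \rVert^2$, which is nonnegative by positivity of the norm; this establishes the claim. Equivalently, one may view the left-hand side as a function of $y$ for fixed $x$: it is strongly convex, its unique minimizer is $y = (1-\alpha)x$ (set the gradient to zero), and a short computation shows the minimal value is precisely $\alpha(1-\alpha)\lVert x \rVert^2$, which is another way to see the inequality.

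I do not anticipate any genuine obstacle here: the statement is an elementary Hilbert-space identity, and the only "trick" is recognizing the perfect square $\lVert (1-\alpha)x - y \rVert^2$ once everything has been multiplied out. The hypothesis $\alpha \in (0,1)$ is used only to keep the weights $\alpha$ and $1-\alpha$ nonnegative so that the left-hand side is a legitimate conic combination of squared norms; in fact the inequality as stated remains valid for $\alpha \in [0,1]$.
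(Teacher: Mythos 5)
Your proof is correct. The paper in fact states Lemma~\ref{lem:hilbert-space} without any proof at all, so your argument fills that omission rather than deviating from one: the expansion $(1-\alpha)\lVert x-y\rVert^2+\alpha\lVert y\rVert^2-\alpha(1-\alpha)\lVert x\rVert^2=\lVert (1-\alpha)x-y\rVert^2$ is an exact algebraic identity in any inner-product space, and nonnegativity of the right-hand side gives the claim; your alternative reading via minimization over $y$ is also correct, with minimizer $y=(1-\alpha)x$ and minimal value $\alpha(1-\alpha)\lVert x\rVert^2$. One small refinement: since the displayed identity holds for every real $\alpha$, the hypothesis $\alpha\in(0,1)$ is not needed at all (not even to make the left-hand side a conic combination), so the inequality is valid for all $\alpha\in\R$, which is stronger than the extension to $[0,1]$ you mention.
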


\section{Deterministic Method}%
\label{sec:var-smoothing-deterministic}

\begin{problem}%
  \label{prob:primal_dual_lipschitz}
  The problem at hand reads
  \begin{equation}
    \min_{x \in \H{}} F(x): =f(x) + g(Kx),
  \end{equation}
  for a proper, convex and lower semicontinuous function $f: \H \to \overbar{\R}$, a convex and $L_g$-Lipschitz continuous $(L_g >0)$ function $g:\G \to \R$, and a nonzero linear continuous operator $K : \H \to \G$.
\end{problem}
The idea of the algorithm which we propose to solve~\eqref{eq:pd} is to smooth $g$ and then to solve the resulting problem by means of an accelerated proximal-gradient.

\begin{algo}[Variable Accelerated SmooThing (VAST)]%
  \label{alg:variable_smoothing_accelerated}
  Let $y_0 = x_0 \in \H, {(\mu_{k})}_{k \geq 0} \! \subseteq (0,+\infty)$, and ${(t_{k})}_{k \ge 1}$ a sequence of real numbers with $t_1=1$ and $t_{k} \ge 1$ for every $k\ge 2$. Consider the following iterative scheme
  \begin{equation*}
    (\forall k \geq 1) \quad 
    \left\lfloor \begin{array}{l}
      L_{k} = \frac{\lVert K \rVert^2}{\mu_{k}} \\
      \gamma_{k} = \frac{1}{L_{k}} \\
      x_{k} = \prox{\gamma_{k}f}{y_{k-1} - \gamma_{k} K^*\prox{\frac{1}{\mu_{k}}g^*}{\frac{K y_{k-1}}{\mu_{k}}} } \\
      y_{k} = x_{k} + \frac{t_{k}-1}{t_{k+1}}(x_{k} - x_{k-1}).
    \end{array}\right.
  \end{equation*}
\end{algo}

\begin{remark}
  The assumption $t_{1} = 1$ can be removed but guarantees easier computation and is also in line with classical choices of ${(t_{k})}_{k \geq 1}$ in~\cite{nesterov83,chambolle_dossal}.
\end{remark}
\begin{remark}
  The sequence ${(u_{k})}_{k \geq 1}$ given by
  \begin{equation}
    u_{k} := x_{k-1} + t_{k}(x_{k} - x_{k-1}) \quad \forall k \geq 1,
  \end{equation}
despite not appearing in the algorithm, will feature a prominent role in the convergence proof. Due to the convention $t_1 = 1$ we have that
  \begin{equation}
    u_{1} := x_{0} + t_{1}(x_{1} - x_{0}) = x_{1}.
  \end{equation}
We also denote
  \begin{equation}
    F^{k} = f + \sm{k}g \circ K \quad \forall k \geq 0.
  \end{equation}
\end{remark}

The next theorem is the main result of this section and it will play a fundamental role when proving a convergence rate of $\mathcal{O}(\frac{1}{k})$ for the sequence ${(F(x_k))}_{k \geq 0}$.
Similar convergence rate results for smoothing algorithms have been obtained in~\cite{bot15variable-smoothing, bot15acceleration-smoothing, bot13double-smoothing}.
The techniques used in this section, however are in flavor of~\cite{smoothing_fista}, with the most notable difference of a much easier choice of the involved parameters.

\begin{theorem}%
  \label{thm:smoothing_accelerated_1k}
  Consider the setup of Problem~\ref{prob:primal_dual_lipschitz} and let ${(x_{k})}_{k \ge 0}$ and ${(y_{k})}_{k \ge 0}$ be the sequences generated by Algorithm~\ref{alg:variable_smoothing_accelerated}.
  Assume that for every $k\ge1$
  \begin{equation}
    \mu_{k} - \mu_{k+1} - \frac{\mu_{k+1}}{t_{k+1}} \le 0
  \end{equation}
  and
  \begin{equation}
    \left( 1 - \frac{1}{t_{k+1}} \right)\gamma_{k+1} t_{k+1}^2 = \gamma_{k}t_{k}^2.
  \end{equation}
  Then, for every optimal solution $x^*$ of Problem~\ref{prob:primal_dual_lipschitz}, it holds
  \begin{equation}
    F(x_{N}) - F(x^*) \le \frac{\lVert x_{0} - x^*\rVert^2 }{2\gamma_{N} t_{N}^{2}} + \mu_{N}\frac{L_{g}^2}{2} \quad \forall N \geq 1.
  \end{equation}
\end{theorem}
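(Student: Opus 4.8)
The plan is to run the standard estimate-sequence / Lyapunov argument for accelerated proximal-gradient methods (in the spirit of \cite{fista, smoothing_fista}), but carefully tracking the error introduced by changing the smoothing parameter $\mu_k$ from one iteration to the next. Since the smoothed objective $F^k = f + \sm{k}g\circ K$ has $\gamma_k^{-1} = L_k = \|K\|^2/\mu_k$-Lipschitz gradient in its smooth part, the $x_k$-update is precisely a proximal-gradient step on $F^k$ from $y_{k-1}$. I would begin by writing the one-step descent inequality for this step: combining the descent lemma (Lemma~\ref{lem:stronger-grad-inequality}, or rather the basic $\gamma_k^{-1}$-smoothness estimate applied via \eqref{eq:stronger-grad-inequality-with-K}) with the prox inequality for $f$, one obtains, for an arbitrary comparison point $z \in \H$,
\begin{equation*}
F^k(x_k) \le F^k(z) + \frac{1}{\gamma_k}\langle y_{k-1} - x_k, x_k - z\rangle - \frac{1}{2\gamma_k}\|x_k - y_{k-1}\|^2 + \frac{1}{2\gamma_k}\|x_k - y_{k-1}\|^2,
\end{equation*}
i.e. the familiar $F^k(x_k) - F^k(z) \le \tfrac{1}{2\gamma_k}\big(\|y_{k-1}-z\|^2 - \|x_k - z\|^2\big) - $ a correction. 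Applying this with $z = x^*$ and with $z = x_{k-1}$, and forming the convex combination with weights $1/t_k$ and $1 - 1/t_k$, produces the quantity $u_k = x_{k-1} + t_k(x_k - x_{k-1})$ in the squared-norm terms; here is where the relation $(1 - 1/t_{k+1})\gamma_{k+1}t_{k+1}^2 = \gamma_k t_k^2$ and the definition $y_k = x_k + \tfrac{t_k-1}{t_{k+1}}(x_k - x_{k-1})$ are used to telescope $\gamma_k t_k^2(F^k(x_k) - F^k(x^*)) + \tfrac12\|u_k - x^*\|^2$.

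The new ingredient compared to the fixed-smoothing case is that the telescoping couples $F^k$ at step $k$ with $F^{k+1}$ at step $k+1$, so I need to relate $\sm{k+1}g(Kx_k)$ to $\sm{k}g(Kx_k)$. By Lemma~\ref{lem:two-different-smoothing-parameters}, $\sm{k}g(Kx_k) \le \sm{k+1}g(Kx_k) + (\mu_{k+1} - \mu_k)\tfrac12\|\nabla\sm{k}g(Kx_k)\|^2$ — wait, we want the bound in the direction that lets us replace $F^k(x_k)$ by $F^{k+1}(x_k)$ plus a controllable term; since $\mu_{k+1} \le \mu_k$ is not assumed, I use monotonicity of $\mu \mapsto \sm{}g$ or the precise first-order bound of Lemma~\ref{lem:two-different-smoothing-parameters} to get $F^{k}(x_k) - F^{k+1}(x_k) \le (\mu_{k} - \mu_{k+1})\tfrac12\|\nabla\sm{k}g(Kx_k)\|^2$. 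The gradient-norm term is then absorbed using the extra negative $-\tfrac{\mu_k}{2}\|\nabla\sm{k}g(Kx_k)\|^2$ contribution coming from Lemma~\ref{lem:smoothing-and-no-smoothing} (applied through \eqref{eq:smoothing-and-no-smoothing-with-K} at the comparison point $x^*$), together with a suitable rescaling by the $t_{k+1}$-weight — this is exactly where the hypothesis $\mu_k - \mu_{k+1} - \mu_{k+1}/t_{k+1} \le 0$ enters, ensuring the net coefficient on $\|\nabla\sm{k}g(Kx_k)\|^2$ is nonpositive. Lemma~\ref{lem:hilbert-space} is the tool that lets me convert the mixed term $(1-\alpha)\|u_k - x^*\|^2 + \alpha\|\cdots\|^2$ bookkeeping into a clean bound on $\|u_{k+1} - x^*\|^2$.

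Defining the Lyapunov sequence $E_k := \gamma_k t_k^2\,(F^k(x_k) - F^k(x^*)) + \tfrac12\|u_k - x^*\|^2$, the combination of the above shows $E_{k+1} \le E_k$ for all $k \ge 1$ (the smoothing-error terms having been neutralized by the two structural hypotheses). Iterating from $k=1$, and using $u_1 = x_1$, $t_1 = 1$, one gets $\gamma_N t_N^2\,(F^N(x_N) - F^N(x^*)) \le E_1 \le \gamma_1(F^1(x_1) - F^1(x^*)) + \tfrac12\|x_1 - x^*\|^2$, which after one more application of the one-step inequality at $k=1$ with $y_0 = x_0$ collapses to $\tfrac12\|x_0 - x^*\|^2$. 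Finally, I convert back from the smoothed objective to the true one: $F(x_N) = f(x_N) + g(Kx_N) \le f(x_N) + \sm{N}g(Kx_N) + \mu_N \tfrac{L_g^2}{2} = F^N(x_N) + \mu_N\tfrac{L_g^2}{2}$ by Lemma~\ref{lem:smoothing-and-no-smoothing-lipschitz-constant}, while $F^N(x^*) = f(x^*) + \sm{N}g(Kx^*) \le f(x^*) + g(Kx^*) = F(x^*)$ by the same lemma; dividing the Lyapunov bound by $\gamma_N t_N^2$ yields the claimed estimate. The main obstacle is the second paragraph: getting the precise coefficient bookkeeping so that the smoothing-increment term $(\mu_k - \mu_{k+1})\|\nabla\sm{k}g(Kx_k)\|^2$ is exactly dominated by the available negative quadratic terms under the stated hypothesis, rather than a stronger one — this is the delicate "coupling" the introduction alludes to.
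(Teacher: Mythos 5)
Your proposal is correct and follows essentially the same route as the paper: the one-step proximal-gradient inequality for $F^{k+1}$ at the comparison point $\left(1-\tfrac{1}{t_{k+1}}\right)x_k + \tfrac{1}{t_{k+1}}x^*$, the smoothing-increment error from Lemma~\ref{lem:two-different-smoothing-parameters} absorbed by the negative gradient-norm terms coming from Lemma~\ref{lem:smoothing-and-no-smoothing} at $x^*$ and Lemma~\ref{lem:stronger-grad-inequality} at $x_k$, combined via Lemma~\ref{lem:hilbert-space}, and the final telescoping and de-smoothing via Lemma~\ref{lem:smoothing-and-no-smoothing-lipschitz-constant}. The only detail to correct is that Lemma~\ref{lem:hilbert-space} is used to merge the two negative \emph{gradient-norm} terms $\left(1-\tfrac{1}{t_{k+1}}\right)\lVert \nabla\sm{k+1}g(Ky_k)-\nabla\sm{k+1}g(Kx_k)\rVert^2 + \tfrac{1}{t_{k+1}}\lVert\nabla\sm{k+1}g(Ky_k)\rVert^2$ into a bound on $\lVert\nabla\sm{k+1}g(Kx_k)\rVert^2$, not for the $\lVert u_{k+1}-x^*\rVert^2$ bookkeeping, which follows directly from the choice of comparison point.
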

The proof of this result relies on several partial results which we will prove as follows.

\begin{lemma}%
  \label{lem:double-helpful-deterministic}
  The following statement holds for every $z \in \H$ and every $k\ge 0$
  \begin{align*}
      F^{k+1}(x_{k+1}) + \frac{1}{2 \gamma_{k+1}} \lVert x_{k+1} - z \rVert^2 & \le \\
      f(z) + \sm{k+1}g(Ky_{k}) + \left\langle \nabla (\sm{k+1} g \circ K ) (y_{k}), z - y_{k}\right\rangle + \frac{1}{2 \gamma_{k+1}} \lVert z - y_{k} \rVert^2.
  \end{align*}
\end{lemma}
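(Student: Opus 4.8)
The plan is to recognize this as the standard one-step descent estimate for an accelerated proximal-gradient step applied to the smoothed objective $F^{k+1} = f + \sm{k+1}g \circ K$. By the gradient formula~\eqref{eq:grad-prox}, the update in Algorithm~\ref{alg:variable_smoothing_accelerated} can be rewritten as $x_{k+1} = \prox{\gamma_{k+1} f}{y_{k} - \gamma_{k+1}\nabla(\sm{k+1}g\circ K)(y_{k})}$; that is, $x_{k+1}$ is a proximal-gradient step at $y_{k}$ with stepsize $\gamma_{k+1} = 1/L_{k+1}$ on the function $h := \sm{k+1}g\circ K$, whose gradient is $L_{k+1}$-Lipschitz continuous with $L_{k+1} = \lVert K\rVert^2/\mu_{k+1}$ (cf.\ the discussion around~\eqref{eq:grad-prox}).

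First I would exploit the optimality condition for the proximal minimization: since $x_{k+1}$ minimizes $f(\cdot) + \frac{1}{2\gamma_{k+1}}\lVert \cdot - (y_{k} - \gamma_{k+1}\nabla h(y_{k}))\rVert^2$, it holds that $\frac{1}{\gamma_{k+1}}(y_{k} - x_{k+1}) - \nabla h(y_{k}) \in \partial f(x_{k+1})$. Feeding this subgradient into the subgradient inequality for the convex function $f$ at $x_{k+1}$, tested against an arbitrary $z \in \H$, gives $f(x_{k+1}) \le f(z) + \frac{1}{\gamma_{k+1}}\langle x_{k+1} - y_{k}, z - x_{k+1}\rangle + \langle \nabla h(y_{k}), z - x_{k+1}\rangle$. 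Next I would add the descent lemma for functions with Lipschitz continuous gradient, $h(x_{k+1}) \le h(y_{k}) + \langle \nabla h(y_{k}), x_{k+1} - y_{k}\rangle + \frac{1}{2\gamma_{k+1}}\lVert x_{k+1} - y_{k}\rVert^2$ (using $L_{k+1} = 1/\gamma_{k+1}$), and merge the two inner products against $\nabla h(y_{k})$ into the single term $\langle \nabla h(y_{k}), z - y_{k}\rangle$. This yields $F^{k+1}(x_{k+1}) \le f(z) + h(y_{k}) + \langle \nabla h(y_{k}), z - y_{k}\rangle + \frac{1}{\gamma_{k+1}}\langle x_{k+1} - y_{k}, z - x_{k+1}\rangle + \frac{1}{2\gamma_{k+1}}\lVert x_{k+1} - y_{k}\rVert^2$.

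Finally, I would substitute the three-point identity $\langle x_{k+1} - y_{k}, z - x_{k+1}\rangle = \frac12\big(\lVert z - y_{k}\rVert^2 - \lVert z - x_{k+1}\rVert^2 - \lVert x_{k+1} - y_{k}\rVert^2\big)$, which makes the two $\lVert x_{k+1} - y_{k}\rVert^2$ contributions cancel and leaves $\frac{1}{2\gamma_{k+1}}\lVert z - y_{k}\rVert^2$ on the right; moving $\frac{1}{2\gamma_{k+1}}\lVert z - x_{k+1}\rVert^2$ to the left and recalling $h(y_{k}) = \sm{k+1}g(Ky_{k})$ and $\nabla h(y_{k}) = \nabla(\sm{k+1}g\circ K)(y_{k})$ produces exactly the asserted inequality. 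There is no genuine obstacle here: the statement is the textbook proximal-gradient inequality, and the only points requiring care are the correct rewriting of the $\operatorname{prox}$-form of the iteration via~\eqref{eq:grad-prox} and the bookkeeping of signs in the three-point identity.
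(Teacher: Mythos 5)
Your proposal is correct and follows essentially the same route as the paper: the paper invokes the fact that $x_{k+1}$ minimizes a $\tfrac{1}{\gamma_{k+1}}$-strongly convex prox objective (which is exactly your subgradient optimality condition combined with the three-point identity) and then applies the descent lemma for $\sm{k+1}g\circ K$ with $\tfrac{1}{\gamma_{k+1}}\ge L_{k+1}$. The only difference is presentational—you unroll the strongly-convex-minimizer inequality into its two standard ingredients—so the argument is sound as written.
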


\begin{proof}%
  Let $k\ge 0$ be fixed. Since, by the definition of the proximal map, $x_{k+1}$ is the minimizer of a $\frac{1}{\gamma_{k+1}}$-strongly convex function we know that for every $z \in \H$
  \begin{align*}
      f(x_{k+1}) + \sm{k+1}g(Ky_{k}) + \left\langle \nabla (\sm{k+1} g \circ K ) (y_{k}), x_{k+1} - y_{k}\right\rangle + \frac{1}{2 \gamma_{k+1}} \lVert x_{k+1} - y_{k} \rVert^2 +& \\
      \frac{1}{2 \gamma_{k+1}} \lVert x_{k+1} - z \rVert^2 & \le \\
      f(z) + \sm{k+1}g(Ky_{k}) + \left\langle \nabla (\sm{k+1} g \circ K ) (y_{k}), z - y_{k}\right\rangle + \frac{1}{2 \gamma_{k+1}} \lVert z - y_{k} \rVert^2.&
    \end{align*}
  Next we use the $L_{k+1}$-smoothness of $\sm{k+1} g \circ K$ and the fact that $\frac{1}{\gamma_{k+1}} \ge L_{k+1}$ to deduce
    \begin{align*}
      f(x_{k+1}) + \sm{k+1}g(Kx_{k+1}) + \frac{1}{2 \gamma_{k+1}} \lVert x_{k+1} - z \rVert^2 &  \le \\
      f(z) + \sm{k+1}g(Ky_{k}) + \left\langle \nabla (\sm{k+1} g \circ K ) (y_{k}), z - y_{k}\right\rangle + \frac{1}{2 \gamma_{k+1}} \lVert z - y_{k} \rVert^2.&
    \end{align*}
\end{proof}

\begin{lemma}%
  \label{lem:initial}
Let $x^*$ be an optimal solution of Problem~\ref{prob:primal_dual_lipschitz}. Then it holds
  \begin{equation}
    \label{eq:initial}
    \gamma_1 (F^{1}(x_{1})- F(x^*)) + \frac12\lVert u_{1} - x^* \rVert^2 \le \frac{1}{2} \lVert x^* - x_{0} \rVert^2.
  \end{equation}
\end{lemma}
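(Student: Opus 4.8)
The plan is to instantiate Lemma~\ref{lem:double-helpful-deterministic} at the very first step $k=0$, where the extrapolation has not yet "kicked in" ($y_0 = x_0$, $t_1 = 1$, $u_1 = x_1$), and then to use the standard smoothing inequalities to pass from the smoothed objective $F^1$ to the true objective $F$. Concretely, I would apply Lemma~\ref{lem:double-helpful-deterministic} with $k=0$ and $z := x^*$, which yields
\[
  F^{1}(x_{1}) + \frac{1}{2\gamma_{1}}\lVert x_{1} - x^* \rVert^2 \le f(x^*) + \sm{1}g(Ky_{0}) + \left\langle \nabla(\sm{1}g\circ K)(y_0), x^* - y_0 \right\rangle + \frac{1}{2\gamma_1}\lVert x^* - y_0 \rVert^2.
\]
Since $y_0 = x_0$, the last term is $\frac{1}{2\gamma_1}\lVert x^* - x_0 \rVert^2$, and since $u_1 = x_1$ the term $\frac{1}{2\gamma_1}\lVert x_1 - x^*\rVert^2$ on the left already has the desired form (modulo the factor $\gamma_1$, which I will clear at the end by multiplying through).

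The core estimate is to bound $\sm{1}g(Ky_0) + \langle \nabla(\sm{1}g\circ K)(y_0), x^* - y_0\rangle$ from above by $g(Kx^*)$. This is exactly inequality~\eqref{eq:smoothing-and-no-smoothing-with-K} (the $K$-composed form of Lemma~\ref{lem:smoothing-and-no-smoothing}) applied with smoothing parameter $\mu_1$, at the points $x := y_0$ and $y := x^*$: it gives
\[
  \sm{1}g(Ky_0) + \left\langle \nabla(\sm{1}g\circ K)(y_0), x^* - y_0\right\rangle \le g(Kx^*) - \frac{\mu_1}{2}\lVert \nabla\sm{1}g(Ky_0)\rVert^2 \le g(Kx^*).
\]
Combining this with the previous display, and recalling $f(x^*) + g(Kx^*) = F(x^*)$, we obtain
\[
  F^{1}(x_{1}) + \frac{1}{2\gamma_{1}}\lVert x_{1} - x^* \rVert^2 \le F(x^*) + \frac{1}{2\gamma_1}\lVert x^* - x_0 \rVert^2,
\]
i.e.\ $F^1(x_1) - F(x^*) + \frac{1}{2\gamma_1}\lVert x_1 - x^*\rVert^2 \le \frac{1}{2\gamma_1}\lVert x^* - x_0\rVert^2$. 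Multiplying both sides by $\gamma_1$ and using $u_1 = x_1$ gives precisely~\eqref{eq:initial}.

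I do not anticipate a serious obstacle here; this is a base-case computation. The one point requiring a little care is bookkeeping the conventions $y_0 = x_0$, $t_1 = 1$, and $u_1 = x_1$ correctly so that the generic inequality of Lemma~\ref{lem:double-helpful-deterministic} collapses to the clean form above — in particular that no extrapolation cross-term survives at $k=0$. A secondary subtlety is that we need $\frac{1}{2\gamma_1}\lVert x_1 - x^*\rVert^2$ to appear with the right coefficient; since $u_1 = x_1$ by the convention $t_1 = 1$, multiplying through by $\gamma_1$ turns it into $\frac12\lVert u_1 - x^*\rVert^2$, matching the statement exactly. Everything else is a direct chaining of previously established lemmas, so the proof is short.
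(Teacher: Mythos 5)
Your proposal is correct and follows essentially the same route as the paper: apply Lemma~\ref{lem:double-helpful-deterministic} at $k=0$ with $z=x^*$, bound the linearization of $\sm{1}g\circ K$ at $y_0$ by $g(Kx^*)$, and use the conventions $y_0=x_0$, $u_1=x_1$. The only cosmetic difference is that you invoke~\eqref{eq:smoothing-and-no-smoothing-with-K} and discard the nonpositive $-\frac{\mu_1}{2}\lVert\nabla\sm{1}g(Ky_0)\rVert^2$ term, whereas the paper chains the plain gradient inequality with $\sm{1}g \le g$; both give the same bound.
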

\begin{proof}
  We use the gradient inequality to deduce that for every $z \in \H$ and every $k \geq 0$
  \begin{equation}
    \sm{k+1}g(Ky_{k}) + \left\langle \nabla (\sm{k+1} g \circ K ) (y_{k}), z - y_{k}\right\rangle  \le \sm{k+1}g(Kz) \le g(Kz) 
  \end{equation}
  and plug this into the statement of Lemma~\ref{lem:double-helpful-deterministic} to conclude that
  \begin{equation}
      F^{k+1}(x_{k+1}) + \frac{1}{2 \gamma_{k+1}} \lVert x_{k+1} - z \rVert^2 \le F(z) + \frac{1}{2 \gamma_{k+1}} \lVert z - y_{k} \rVert^2.
  \end{equation}
  For $k=0$  we get that
  \begin{equation}
      F^{1}(x_{1}) + \frac{1}{2 \gamma_{1}} \lVert x_{1} - x^* \rVert^2 \le F(x^*) + \frac{1}{2 \gamma_{1}} \lVert x^* - y_{0} \rVert^2.
  \end{equation}
  Now we us the fact that $u_1 = x_1$ and $y_0= x_0$ to obtain the conclusion.
\end{proof}

\begin{lemma}%
  \label{lem:descent}
Let $x^*$ be an optimal solution of Problem~\ref{prob:primal_dual_lipschitz}.  The following descent-type inequality holds for every $k \ge 0$
  \begin{equation}
    \begin{aligned}
      F^{k+1}(x_{k+1}) - F(x^*) & +  \frac{\lVert u_{k+1} - x^* \rVert^2}{2 \gamma_{k+1} t_{k+1}^2}
      \le \left(1- \frac{1}{t_{k+1}}\right) \left(F^{k}(x_{k}) - F(x^*)\right) + \frac{\lVert u_{k} - x^* \rVert^2}{2 \gamma_{k+1} t_{k+1}^2} \\
      &+ \left(1- \frac{1}{t_{k+1}}\right)\left(\mu_{k} - \mu_{k+1}- \frac{\mu_{k+1}}{t_{k+1}}\right)\lVert \nabla\sm{k+1}g(Kx_{k}) \rVert^2. 
    \end{aligned}
  \end{equation}
\end{lemma}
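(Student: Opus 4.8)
The plan is to run the standard accelerated proximal-gradient (FISTA) estimate, but carried out for the moving surrogates $F^{k}=f+\sm{k}g\circ K$ rather than for a fixed objective, paying attention to the error produced when $\sm{k+1}g$ is traded for $\sm{k}g$. Everything is set in motion by Lemma~\ref{lem:double-helpful-deterministic}, which I would invoke at the single test point $z:=(1-\tfrac1{t_{k+1}})x_k+\tfrac1{t_{k+1}}x^*$. Since $u_{k+1}=x_k+t_{k+1}(x_{k+1}-x_k)$ we have $x_{k+1}=(1-\tfrac1{t_{k+1}})x_k+\tfrac1{t_{k+1}}u_{k+1}$, and from the definitions of $u_k$ and $y_k$ we have $y_k=(1-\tfrac1{t_{k+1}})x_k+\tfrac1{t_{k+1}}u_k$; consequently $x_{k+1}-z=\tfrac1{t_{k+1}}(u_{k+1}-x^*)$ and $z-y_k=\tfrac1{t_{k+1}}(x^*-u_k)$, so the two squared-distance terms appearing in Lemma~\ref{lem:double-helpful-deterministic} become $\tfrac{\lVert u_{k+1}-x^*\rVert^2}{2\gamma_{k+1}t_{k+1}^2}$ and $\tfrac{\lVert u_k-x^*\rVert^2}{2\gamma_{k+1}t_{k+1}^2}$ right away. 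This already yields the quadratic part of the claimed inequality, with no further manipulation.

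What then remains is to turn the right-hand term $f(z)+\sm{k+1}g(Ky_k)+\langle\nabla(\sm{k+1}g\circ K)(y_k),z-y_k\rangle$ into $(1-\tfrac1{t_{k+1}})F^{k}(x_k)+\tfrac1{t_{k+1}}F(x^*)$ plus gradient corrections. I would bound $f(z)\le(1-\tfrac1{t_{k+1}})f(x_k)+\tfrac1{t_{k+1}}f(x^*)$ by convexity of $f$, and split $\sm{k+1}g(Ky_k)+\langle\nabla(\sm{k+1}g\circ K)(y_k),z-y_k\rangle$ along $z-y_k=(1-\tfrac1{t_{k+1}})(x_k-y_k)+\tfrac1{t_{k+1}}(x^*-y_k)$ into a $\tfrac1{t_{k+1}}$-weighted piece aimed at $x^*$ and a $(1-\tfrac1{t_{k+1}})$-weighted piece aimed at $x_k$. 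The $x^*$-piece I would bound using~\eqref{eq:smoothing-and-no-smoothing-with-K} (with $x:=y_k$, $y:=x^*$), which gives $g(Kx^*)$ minus $\tfrac{\mu_{k+1}}2\lVert\nabla\sm{k+1}g(Ky_k)\rVert^2$. The $x_k$-piece I would bound using the sharper~\eqref{eq:stronger-grad-inequality-with-K} (with $x:=y_k$, $y:=x_k$), which gives $\sm{k+1}g(Kx_k)-\tfrac1{2\mu_{k+1}}\lVert\nabla\sm{k+1}g(Ky_k)-\nabla\sm{k+1}g(Kx_k)\rVert^2$, and then Lemma~\ref{lem:two-different-smoothing-parameters} (with $\mu_1:=\mu_{k+1}$, $\mu_2:=\mu_k$) to replace $\sm{k+1}g(Kx_k)$ by $\sm{k}g(Kx_k)+(\mu_k-\mu_{k+1})\tfrac12\lVert\nabla\sm{k+1}g(Kx_k)\rVert^2$. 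Collecting, and then subtracting $F(x^*)$ from both sides (using $\tfrac1{t_{k+1}}+(1-\tfrac1{t_{k+1}})=1$), leaves precisely the claimed inequality up to the accumulated gradient terms.

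The step I expect to be the real obstacle is disposing of those gradient terms. After the reductions above they consist of the positive $(1-\tfrac1{t_{k+1}})(\mu_k-\mu_{k+1})\tfrac12\lVert\nabla\sm{k+1}g(Kx_k)\rVert^2$ delivered by Lemma~\ref{lem:two-different-smoothing-parameters} together with two negative terms — one proportional to $\lVert\nabla\sm{k+1}g(Ky_k)\rVert^2$ with weight $\tfrac1{t_{k+1}}$, the other proportional to $\lVert\nabla\sm{k+1}g(Ky_k)-\nabla\sm{k+1}g(Kx_k)\rVert^2$ with weight $1-\tfrac1{t_{k+1}}$ — and the task is to bound the whole thing by a multiple of $\lVert\nabla\sm{k+1}g(Kx_k)\rVert^2$ of the stated shape, namely $(1-\tfrac1{t_{k+1}})(\mu_k-\mu_{k+1}-\tfrac{\mu_{k+1}}{t_{k+1}})\lVert\nabla\sm{k+1}g(Kx_k)\rVert^2$. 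The device is Lemma~\ref{lem:hilbert-space}, used with $x:=\nabla\sm{k+1}g(Kx_k)$, $y:=\nabla\sm{k+1}g(Kx_k)-\nabla\sm{k+1}g(Ky_k)$ and $\alpha:=1-\tfrac1{t_{k+1}}$ — the one value of $\alpha$ for which the two negative terms match the two summands of Lemma~\ref{lem:hilbert-space} exactly — which converts them into a negative multiple of $\lVert\nabla\sm{k+1}g(Kx_k)\rVert^2$; adding the positive term then produces the gradient contribution in the claim. For $k=0$ there is nothing extra to do: with the convention $u_0=y_0=x_0$ the whole argument specializes, $t_1=1$ killing every term carrying the factor $1-\tfrac1{t_1}$, to exactly Lemma~\ref{lem:initial}.
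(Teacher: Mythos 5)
Your proposal is correct and follows essentially the same route as the paper's proof: Lemma~\ref{lem:double-helpful-deterministic} at $z=(1-\tfrac{1}{t_{k+1}})x_k+\tfrac{1}{t_{k+1}}x^*$, convexity of $f$, the split of the linear term handled by \eqref{eq:smoothing-and-no-smoothing-with-K} toward $x^*$ and \eqref{eq:stronger-grad-inequality-with-K} toward $x_k$, Lemma~\ref{lem:two-different-smoothing-parameters} to pass from $\sm{k+1}g(Kx_k)$ to $\sm{k}g(Kx_k)$, and Lemma~\ref{lem:hilbert-space} to merge the gradient norms. One small remark: the coefficient of the difference-of-gradients term must be $\tfrac{\mu_{k+1}}{2}$ (i.e.\ $\tfrac{1}{2L_h}$ with $L_h=1/\mu_{k+1}$), not the $\tfrac{1}{2\mu_{k+1}}$ you quote from the display \eqref{eq:stronger-grad-inequality-with-K} (a typo there); only with $\tfrac{\mu_{k+1}}{2}$ do the two negative terms carry the common prefactor that lets them ``match the two summands of Lemma~\ref{lem:hilbert-space} exactly,'' as your argument requires.
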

\begin{proof}
 Let be $k \geq 0$ fixed. We apply Lemma~\ref{lem:double-helpful-deterministic} with $z := \left( 1- \frac{1}{t_{k+1}} \right)x_{k} + \frac{1}{t_{k+1}} x^*$ to deduce that
  \begin{equation}
    \begin{aligned}
      F^{k+1}(x_{k+1})& + \frac{\lVert u_{k+1} - x^* \rVert^2}{2 \gamma_{k+1} t_{k+1}^2}  \le f\left(\left(1- \frac{1}{t_{k+1}}\right) x_{k} + \frac{1}{t_{k+1}} x^* \right) + \sm{k+1}g(Ky_{k}) \\ 
      &+ \left(1- \frac{1}{t_{k+1}}\right) \left\langle \nabla (\sm{k+1} g \circ K ) (y_{k}), x_{k} - y_{k}\right\rangle \\ 
& + \frac{1}{t_{k+1}} \left\langle \nabla (\sm{k+1} g \circ K ) (y_{k}), x^* - y_{k}\right\rangle + \frac{1}{2 \gamma_{k+1} t_{k+1}^2} \lVert u_{k} - x^* \rVert^2.
    \end{aligned}
  \end{equation}
  Using the convexity of $f$ gives
  \begin{equation}
    \label{eq:cool1-basic}
    \begin{aligned}
      F^{k+1}(x_{k+1})& + \frac{\lVert u_{k+1} - x^* \rVert^2}{2 \gamma_{k+1} t_{k+1}^2}
      \le 
      \left(1- \frac{1}{t_{k+1}}\right) f(x_{k}) + \frac{1}{t_{k+1}}f(x^*) \\ 
      &+ \left(1- \frac{1}{t_{k+1}}\right)\sm{k+1}g(Ky_{k}) +
      \left(1- \frac{1}{t_{k+1}}\right) \left\langle \nabla (\sm{k+1} g \circ K ) (y_{k}), x_{k} - y_{k}\right\rangle \\
      &+ \frac{1}{t_{k+1}} \sm{k+1}g(Ky_{k}) +
      \frac{1}{t_{k+1}} \left\langle \nabla (\sm{k+1} g \circ K ) (y_{k}), x^* - y_{k}\right\rangle + \frac{\lVert u_{k} - x^* \rVert^2}{2 \gamma_{k+1} t_{k+1}^2}.
    \end{aligned}
  \end{equation}
  Now, we use~\eqref{eq:smoothing-and-no-smoothing-with-K} to deduce that
  \begin{equation}
    \label{eq:apply-smoothing-and-no-smoothing-basic}
\begin{aligned}
    \frac{1}{t_{k+1}} \sm{k+1}g(Ky_{k}) + \frac{1}{t_{k+1}} \left\langle \nabla (\sm{k+1} g \circ K ) (y_{k}), x^* - y_{k}\right\rangle  & \le \\
    \frac{1}{t_{k+1}} g(Kx^*) - \frac{1}{t_{k+1}}\frac{\mu_{k+1}}{2} \lVert \nabla \sm{k+1}g (Ky_{k}) \rVert^2 &
\end{aligned}  
\end{equation}
  and~\eqref{eq:stronger-grad-inequality-with-K} to conclude that
  \begin{equation}
    \label{eq:apply-stronger-grad-ineq-for-smoothed-basic}
    \begin{aligned}
      \left(1- \frac{1}{t_{k+1}}\right)\sm{k+1}g(Ky_{k}) +
      \left(1- \frac{1}{t_{k+1}}\right) \left\langle \nabla (\sm{k+1} g \circ K ) (y_{k}), x_{k} - y_{k}\right\rangle & \le \\
      \left(1- \frac{1}{t_{k+1}}\right) \sm{k+1}g(Kx_{k})  - \left(1- \frac{1}{t_{k+1}}\right)\frac{\mu_{k+1}}{2}\lVert \nabla \sm{k+1}g (Ky_{k}) - \nabla \sm{k+1} g (Kx_{k}) \rVert^2. &
    \end{aligned}
  \end{equation}
  Combining~\eqref{eq:cool1-basic},~\eqref{eq:apply-smoothing-and-no-smoothing-basic} and~\eqref{eq:apply-stronger-grad-ineq-for-smoothed-basic} gives
  \begin{equation}
    \label{eq:cool2}
    \begin{aligned}
      F^{k+1}(x_{k+1})& + \frac{\lVert u_{k+1} - x^* \rVert^2}{2 \gamma_{k+1} t_{k+1}^2}
      \le \left(1- \frac{1}{t_{k+1}}\right) \sm{k+1}g(Kx_{k}) + \left(1- \frac{1}{t_{k+1}}\right) f(x_{k}) \\
      &+ \frac{1}{t_{k+1}} g(Kx^*) + \frac{1}{t_{k+1}}f(x^*) \\ 
      &- \left(1- \frac{1}{t_{k+1}}\right)\frac{\mu_{k+1}}{2}\lVert \nabla \sm{k+1}g(Ky_{k})  - \nabla \sm{k+1} g(Kx_{k}) \rVert^2\\
 & - \frac{1}{t_{k+1}}\frac{\mu_{k+1}}{2} \lVert \nabla \sm{k+1}g(Ky_{k}) \rVert^2 + \frac{\lVert u_{k} - x^* \rVert^2}{2 \gamma_{k+1} t_{k+1}^2}.
    \end{aligned}
  \end{equation}
  The first term on the right hand side is $\sm{k+1}g(Kx_{k})$ but we would like it to be $\sm{k}g(Kx_{k})$. Therefore we use Lemma~\ref{lem:two-different-smoothing-parameters} to deduce that
  \begin{equation}
    \label{eq:cool3-basic}
    \begin{aligned}
      F^{k+1}(x_{k+1})& + \frac{\lVert u_{k+1} - x^* \rVert^2}{2 \gamma_{k+1} t_{k+1}^2}
      \le \left(1- \frac{1}{t_{k+1}}\right) \sm{k}g(Kx_{k}) + \left(1- \frac{1}{t_{k+1}}\right) f(x_{k}) \\
      &+ \frac{1}{t_{k+1}} g(Kx^*) + \frac{1}{t_{k+1}}f(x^*) + \left(1- \frac{1}{t_{k+1}}\right)(\mu_{k} - \mu_{k+1})\frac12\lVert \nabla \sm{k+1}g(Kx_{k}) \rVert^2 \\ 
      &- \left(1- \frac{1}{t_{k+1}}\right)\frac{\mu_{k+1}}{2}\lVert \nabla \sm{k+1}g(Ky_{k}) - \nabla \sm{k+1}g(Kx_{k}) \rVert^2 \\ 
& - \frac{1}{t_{k+1}}\frac{\mu_{k+1}}{2} \lVert \nabla \sm{k+1}g(Ky_{k}) \rVert^2 + \frac{\lVert u_{k} - x^* \rVert^2}{2 \gamma_{k+1} t_{k+1}^2}.
    \end{aligned}
  \end{equation}
  Next up we want to estimate all the norms of gradients by using Lemma~\ref{lem:hilbert-space} which says that
  \begin{equation}
    \label{eq:hilbert-space-applied-basic}
\begin{aligned}
      \left( 1 - \frac{1}{t_{k+1}} \right) \lVert \nabla \sm{k+1}g(Ky_{k}) -  \nabla \sm{k+1}g(Kx_{k})\rVert^2 
      + \frac{1}{t_{k+1}} \lVert \nabla \sm{k+1}g(Ky_{k}) \rVert^2&  \ge \\
      \left( 1 - \frac{1}{t_{k+1}} \right)\frac{1}{t_{k+1}} \lVert \nabla \sm{k+1}g(Kx_{k}) \rVert^2. &
\end{aligned}
  \end{equation}
  Combining~\eqref{eq:cool3-basic} and~\eqref{eq:hilbert-space-applied-basic} gives
  \begin{equation}
    \label{eq:cool4}
    \begin{aligned}
      F^{k+1}(x_{k+1})& + \frac{\lVert u_{k+1} - x^* \rVert^2}{2 \gamma_{k+1} t_{k+1}^2}
      \le \left(1- \frac{1}{t_{k+1}}\right) \sm{k}g(Kx_{k}) + \left(1- \frac{1}{t_{k+1}}\right) f(x_{k}) \\
      &+ \frac{1}{t_{k+1}} g(Kx^*) + \frac{1}{t_{k+1}}f(x^*) + \left(1- \frac{1}{t_{k+1}}\right)(\mu_{k} - \mu_{k+1})\frac12\lVert \nabla \sm{k+1}g(Kx_{k}) \rVert^2 \\ 
      &- \frac{\mu_{k+1}}{2}\left( 1 - \frac{1}{t_{k+1}} \right)\frac{1}{t_{k+1}} \lVert \nabla \sm{k+1}g(Kx_{k}) \rVert^2 + \frac{\lVert u_{k} - x^* \rVert^2}{2 \gamma_{k+1} t_{k+1}^2}.
    \end{aligned}
  \end{equation}
  Now we combine the two terms containing $\lVert \nabla \sm{k+1}g(Kx_{k}) \rVert^2$ and get that
  \begin{equation}
    \label{eq:cool5}
    \begin{aligned}
      F^{k+1}(x_{k+1})& + \frac{\lVert u_{k+1} - x^* \rVert^2}{2 \gamma_{k+1} t_{k+1}^2}
      \le \left(1- \frac{1}{t_{k+1}}\right) \sm{k}g(Kx_{k}) + \left(1- \frac{1}{t_{k+1}}\right) f(x_{k}) \\
      &+ \frac{1}{t_{k+1}} g(Kx^*) + \frac{1}{t_{k+1}}f(x^*) + \frac{\lVert u_{k} - x^* \rVert^2}{2 \gamma_{k+1} t_{k+1}^2} \\
      &+ \left(1- \frac{1}{t_{k+1}}\right)\left(\mu_{k} - \mu_{k+1}- \frac{\mu_{k+1}}{t_{k+1}}\right) \frac12 \lVert \nabla \sm{k+1}g(Kx_{k}) \rVert^2. 
    \end{aligned}
  \end{equation}
  By subtracting $F(x^*) = f(x^*) + g(Kx^*)$ on both sides we finally obtain
  \begin{equation}
    \begin{aligned}
      F^{k+1}(x_{k+1}) &- F(x^*) + \frac{\lVert u_{k+1} - x^* \rVert^2}{2 \gamma_{k+1} t_{k+1}^2}
      \le \left(1- \frac{1}{t_{k+1}}\right) \left(F^{k}(x_{k}) - F(x^*)\right) + \frac{\lVert u_{k} - x^* \rVert^2}{2 \gamma_{k+1} t_{k+1}^2} \\
      &+ \left(1- \frac{1}{t_{k+1}}\right)\left(\mu_{k} - \mu_{k+1}- \frac{\mu_{k+1}}{t_{k+1}}\right) \frac12 \lVert \nabla \sm{k+1}g(Kx_{k}) \rVert^2. 
    \end{aligned}
  \end{equation}
\end{proof}

Now we are in the position to prove Theorem~\ref{thm:smoothing_accelerated_1k}.

\begin{proof}[Proof of Theorem~\ref{thm:smoothing_accelerated_1k}] We start with the statement of Lemma~\ref{lem:descent} and use the assumption that
  \begin{equation}
    \mu_{k} - \mu_{k+1} - \frac{\mu_{k+1}}{t_{k+1}} \le 0
  \end{equation}
  to make the last term in the inequality disappear for every $k \geq 0$
  \begin{equation}
    \begin{aligned}
      F^{k+1}(x_{k+1}) &- F(x^*) + \frac{\lVert u_{k+1} - x^* \rVert^2}{2 \gamma_{k+1} t_{k+1}^2} \!
      \le \left(1- \frac{1}{t_{k+1}}\right) \left(F^{k}(x_{k}) - F(x^*)\right) + \frac{\lVert u_{k} - x^* \rVert^2}{2 \gamma_{k+1} t_{k+1}^2}.
    \end{aligned}
  \end{equation}
  Now we use the assumption that
  \begin{equation}
    \left( 1 - \frac{1}{t_{k+1}} \right)\gamma_{k+1} t_{k+1}^2 = \gamma_{k}t_{k}^2
  \end{equation}
  to get that for every $k \geq 0$
  \begin{equation}
    \label{eq:new-sum-it-up}
    \gamma_{k+1}t_{k+1}^2(F^{k+1}(x_{k+1}) - F(x^*)) + \frac{\lVert u_{k+1} - x^* \rVert^2}{2} \le 
    \gamma_{k}t_{k}^2 (F^{k}(x_{k}) - F(x^*)) + \frac{\lVert u_{k} - x^* \rVert^2}{2}.
  \end{equation}
  Let be $N \geq 2$. Summing~\eqref{eq:new-sum-it-up} from $k=1$ to $N-1$ and getting rid of the nonnegative term $\lVert u_{N} - x^* \rVert^2$ gives
  \begin{equation}
    \gamma_{N} t_{N}^{2}(F^{N}(x_{N}) - F(x^*) ) \le \gamma_{1}(F^{1}(x_{1}) - F(x^*)) + \frac{\lVert u_{1}- x^* \rVert^2}{2} \quad \forall N \geq 2. 
  \end{equation}
Since $t_1=1$, the above inequality is fulfilled also for $N=1$. Using Lemma~\ref{lem:initial} shows that
  \begin{equation}
    F^{N}(x_{N}) - F(x^*) \le \frac{\lVert x_{0} - x^*\rVert^2 }{\gamma_{N} t_{N}^{2}} \quad \forall N \geq 1.
  \end{equation}
  The above inequality, however, is still in terms of the smoothed objective function.
  In order to go to the actual objective function we apply Lemma~\ref{lem:smoothing-and-no-smoothing-lipschitz-constant} and deduce that
  \begin{equation}
    F(x_{N}) - F(x^*) \le  F^{N}(x_{N}) - F(x^*) + \mu_{N}\frac{L_{g}^2}{2} \le \frac{\lVert x_{0} - x^*\rVert^2}{2\gamma_{N} t_{N}^{2}} + \mu_{N}\frac{L_{g}^2}{2} \quad \forall N \geq 1.
  \end{equation}
\end{proof}

\begin{corollary}%
  \label{cor:param-choices-deterministic}
  There exists a choice of parameters ${(\mu_{k})}_{k\ge1}, {(t_{k})}_{k\ge1}, {(\gamma_{k})}_{k\ge1}$ such that
  \begin{equation}
    \label{eq:mu-inequality}
    \mu_{k} - \mu_{k+1} - \frac{\mu_{k+1}}{t_{k+1}} \le 0
  \end{equation}
  and
  \begin{equation}
    \label{eq:update-mu}
    \left( 1 - \frac{1}{t_{k+1}} \right)\gamma_{k+1} t_{k+1}^2 = \gamma_{k}t_{k}^2
  \end{equation}
  are for every $k \geq 1$ fulfilled and are e.g.\ given by
  \begin{equation}
   t_1=1, \quad \mu_1 = b\lVert K \rVert^2, \ \text{for} \  b >0,
  \end{equation}
  and for every $k \ge 1$
  \begin{equation}
    \label{eq:param-choices}
    t_{k+1} := \sqrt{t_{k}^2 + 2 t_{k}}, \quad 
    \mu_{k+1} := \mu_k \frac{t_{k}^2}{t_{k+1}^2 - t_{k+1}}, \quad \gamma_k := \frac{\mu_k}{\|K\|^2}.
  \end{equation}
  For this choice of the parameters we have that
  \begin{equation}
    F(x_{N}) - F(x^*) \le \frac{\lVert x_{0} - x^*\rVert^2 }{b (N+1)} + \frac{bL_{g}^2\lVert K \rVert^2}{(N+1)} \exp\left(\frac{4 \pi^2}{6}\right) \quad \forall N\ge1.
  \end{equation}
\end{corollary}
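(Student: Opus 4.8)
The plan is to (i) verify that the parameters in~\eqref{eq:param-choices} satisfy the two hypotheses of Theorem~\ref{thm:smoothing_accelerated_1k}, (ii) record elementary two-sided bounds on the sequence $(t_k)_{k\ge1}$, and (iii) insert everything into the conclusion of that theorem, controlling the two resulting terms by telescoping products.

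For (i): since $\gamma_k=\mu_k/\lVert K\rVert^2$, condition~\eqref{eq:update-mu} reads $\mu_{k+1}(t_{k+1}^2-t_{k+1})=\mu_kt_k^2$, which is exactly the recursion defining $\mu_{k+1}$, so it holds by construction. For~\eqref{eq:mu-inequality} I would rewrite it as $\mu_k\le\mu_{k+1}\big(1+\tfrac1{t_{k+1}}\big)$, substitute $\mu_{k+1}=\mu_kt_k^2/\big(t_{k+1}(t_{k+1}-1)\big)$, cancel $\mu_k>0$, and use $t_k^2=t_{k+1}^2-2t_k$; after simplification this is equivalent to $t_{k+1}\le t_k+1$, which holds since $t_{k+1}^2=t_k^2+2t_k\le(t_k+1)^2$. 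For (ii): iterating $t_{k+1}\le t_k+1$ from $t_1=1$ gives $t_k\le k$; and since $t_k\ge1$ we have $t_{k+1}^2=t_k^2+2t_k\ge(t_k+\tfrac12)^2$, hence $t_{k+1}\ge t_k+\tfrac12$ and $t_k\ge\tfrac{k+1}{2}$ for all $k\ge1$.

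For (iii): Theorem~\ref{thm:smoothing_accelerated_1k} gives $F(x_N)-F(x^*)\le\frac{\lVert x_0-x^*\rVert^2}{2\gamma_Nt_N^2}+\mu_N\frac{L_g^2}{2}$, so it remains to bound $\gamma_Nt_N^2$ from below and $\mu_N$ from above. Telescoping~\eqref{eq:update-mu} in the form $\gamma_{k+1}t_{k+1}^2=\gamma_kt_k^2\cdot\frac{t_{k+1}}{t_{k+1}-1}$ yields $\gamma_Nt_N^2=\gamma_1\prod_{k=2}^N\frac{t_k}{t_k-1}$; since $t_k\le k$ forces $\frac{t_k}{t_k-1}\ge\frac{k}{k-1}$, this product is at least $\prod_{k=2}^N\frac{k}{k-1}=N$, so with $\gamma_1=b$ we get $\gamma_Nt_N^2\ge bN$ and the first term is $\le\frac{\lVert x_0-x^*\rVert^2}{2bN}\le\frac{\lVert x_0-x^*\rVert^2}{b(N+1)}$ for $N\ge1$. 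For $\mu_N$ I would use $t_{k+1}^2-t_{k+1}\ge t_k^2+t_k-1$ (a consequence of $t_{k+1}\le t_k+1$) to obtain $\mu_N\le\mu_1\prod_{k=1}^{N-1}\frac{t_k^2}{t_k^2+t_k-1}$, and then compare this product factor-by-factor with $\prod_{k=1}^{N-1}\frac{k}{k+1}=\frac1N$. Writing $\frac{t_k^2}{t_k^2+t_k-1}=\frac{k}{k+1}(1+r_k)$ one finds $r_k=\frac{t_k^2-kt_k+k}{k(t_k^2+t_k-1)}$, whose numerator is $\le k$ because $t_k\le k$ and whose denominator is $\ge k\,t_k^2\ge k(k+1)^2/4$ because $t_k\ge\tfrac{k+1}{2}$; hence $1+r_k>0$ and $r_k\le 4/k^2$. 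Using $1+r_k\le e^{r_k}$ (and negative values of $r_k$, if present, only improve the bound) this gives $\mu_N\le\frac{\mu_1}{N}\exp\!\big(\sum_{k\ge1}4/k^2\big)=\frac{b\lVert K\rVert^2}{N}\exp(4\pi^2/6)$, so the second term is $\le\frac{b\lVert K\rVert^2L_g^2}{2N}\exp(4\pi^2/6)\le\frac{bL_g^2\lVert K\rVert^2}{N+1}\exp(4\pi^2/6)$; adding the two bounds yields the claim.

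I expect the algebraic verifications in (i) and the first telescoping estimate to be routine. The delicate point is the bound on $\mu_N$: estimating $t_N^2$ and the product $\prod\frac{t_k}{t_k-1}$ separately only yields $\mu_N=\mathcal{O}(1)$ and is useless here, so one is forced to keep $\mu_N$ in the form $\prod\frac{t_k^2}{t_k^2+t_k-1}$ and match it against $\prod\frac{k}{k+1}=\frac1N$. It is precisely there that both elementary bounds $\tfrac{k+1}{2}\le t_k\le k$ enter — the upper one to keep the numerator of $r_k$ at most $k$, the lower one to make its denominator grow quadratically in $k$ — and the convergence of $\sum 1/k^2$ is what produces the (admittedly crude) constant $\exp(4\pi^2/6)$.
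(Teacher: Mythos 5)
Your proposal is correct and follows essentially the same route as the paper: verify the two hypotheses by the same algebra (both reduce, via the recursion $t_{k+1}^2=t_k^2+2t_k$, to an inequality equivalent to $t_{k+1}\le t_k+1$), establish $\tfrac{k+1}{2}\le t_k\le k$ by induction, and control $\gamma_Nt_N^2$ and $\mu_N$ through telescoping products whose factors are compared with harmonic-type products, with the multiplicative error bounded by $\prod_k(1+4/k^2)\le\exp(4\pi^2/6)$. The only cosmetic differences are that the paper extracts the decay of $\mu_N$ as $\mu_1\exp(4\pi^2/6)/t_N$ (comparing $t_j/(t_{j+1}-1)$ with $1+\tfrac{1}{t_{j+1}(t_{j+1}-1)}$) and lower-bounds $\gamma_Nt_N^2$ via $\gamma_Nt_N\ge b$, whereas you compare $\tfrac{t_k^2}{t_k^2+t_k-1}$ directly with $\tfrac{k}{k+1}$ and telescope $\gamma_kt_k^2$ itself; both yield the stated constants.
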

\begin{proof}
 Since $\gamma_{k}$ and $\mu_{k}$ are a scalar multiple of each other~\eqref{eq:update-mu} is equivalent to 
  \begin{equation}
    \left( 1 - \frac{1}{t_{k+1}} \right)\mu_{k+1} t_{k+1}^2 = \mu_{k} t_{k}^2 \quad \forall k \geq 1
  \end{equation}
and further to (by taking into account that $t_{k+1} > 1$ for every $k \geq 1$)    
  \begin{equation}
    \label{eq:recursion-mu}
    \mu_{k+1}  = \mu_{k} \frac{t_{k}^2}{t_{k+1}^2}\frac{t_{k+1}}{t_{k+1} - 1} = \mu_{k} \frac{t_{k}^2}{t_{k+1}^2 - t_{k+1}} \quad \forall k \geq 1.
  \end{equation}
  Our update choice in~\eqref{eq:param-choices} for the sequence ${(\mu_{k})}_{k\ge1}$ is exactly chosen in such a way that it satisfies this.
  Plugging~\eqref{eq:recursion-mu} into~\eqref{eq:mu-inequality} gives for every $k \geq 1$ the condition 
  \begin{equation}
    \label{eq:combined-equation-for-t}
    1 \le \left( 1 + \frac{1}{t_{k+1}} \right) \frac{t_{k}^2}{t_{k+1}^2}\frac{t_{k+1}}{t_{k+1} - 1} = \frac{t_{k}^2}{t_{k+1}^2} \frac{t_{k+1} +1}{t_{k+1} - 1},
 \end{equation}
 which is equivalent to
 \begin{equation}
   0 \ge t_{k+1}^3 - t_{k+1}^2 - t_{k}^2t_{k+1} - t_{k}^2
 \end{equation}
 and further to
 \begin{equation}
   t_{k+1}^2 + t_{k}^2 \ge  t_{k+1}\left(t_{k+1}^2 -  t_{k}^2\right).
 \end{equation}
 Plugging in $t_{k+1} = \sqrt{t_{k}^2+2t_{k}}$ we get that this equivalent to
 \begin{equation}
   t_{k+1}^2 + t_{k}^2 \ge  t_{k+1}2t_{k} \quad \forall k \geq 1,
 \end{equation}
 which is evidently fulfilled.
 Thus, the choices in~\eqref{eq:param-choices} are indeed feasible for our algorithm.

 Now we want to prove the claimed convergence rates. Via induction we show that 
 \begin{equation}
   \label{eq:inequalities-t}
   \frac{k+1}{2} \le t_{k} \le k \quad \forall k \geq 1.
 \end{equation}
 Evidently, this holds for $t_1=1$. Assuming that~\eqref{eq:inequalities-t} holds for $k \geq 1$, we easily see that
 \begin{equation}
   t_{k+1} = \sqrt{t_{k}^2 + 2t_{k}} \le \sqrt{k^2 + 2k} \le \sqrt{k^2 +2k +1} = k+1
 \end{equation}
 and, on the other hand,
 \begin{equation}
   t_{k+1} = \sqrt{t_{k}^2 + 2t_{k}} \ge \sqrt{\frac{{(k+1)}^2}{4} + k+1} = \frac12 \sqrt{k^2 + 6k + 5} \ge \frac12 \sqrt{k^2 + 4k + 4} = \frac{k+2}{2}.
 \end{equation}

 In the following we prove a similar estimate for the sequence ${(\mu_{k})}_{k \geq 1}$. 
To this end we show, again by induction, the following recursion for every $k\ge 2$
 \begin{equation}
   \label{eq:recursion-assumption-mu}
   \mu_{k} = \mu_1\frac{\prod_{j=1}^{k-1} t_j}{\prod_{j=2}^{k}(t_j - 1)} \frac{1}{t_{k}}.
 \end{equation}
 For $k=2$ this follows from the definition~\eqref{eq:recursion-mu}.
 Assume now that~\eqref{eq:recursion-assumption-mu} holds for $k \geq 2$. From here we have that
 \begin{equation}
   \mu_{k+1} = \mu_{k} \frac{t_{k}^2}{t_{k+1}(t_{k+1}-1)} = \mu_1\frac{\prod_{j=1}^{k-1} t_j}{\prod_{j=2}^{k}(t_j - 1)} \frac{1}{t_{k}}\frac{t_{k}^2}{t_{k+1}(t_{k+1}-1)} = \mu_1\frac{\prod_{j=1}^{k} t_j}{\prod_{j=2}^{k+1}(t_j - 1)} \frac{1}{t_{k+1}}.
 \end{equation}
 Using~\eqref{eq:recursion-assumption-mu} together with~\eqref{eq:inequalities-t} we can check that for every $k \geq 1$
 \begin{equation}
   \begin{aligned}
     \mu_{k+1} = \mu_1\frac{\prod_{j=1}^{k} t_j}{\prod_{j=2}^{k+1}(t_j - 1)} \frac{1}{t_{k+1}} = \frac{\mu_1}{t_{k+1}} \prod_{j=1}^{k}\frac{t_j}{(t_{j+1} - 1)} \ge \frac{\mu_1}{t_{k+1}} = b \lVert K \rVert^2\frac{1}{t_{k+1}},
   \end{aligned}
 \end{equation}
 where we used in the last step the fact that $t_{k+1} \le t_{k}+1$.

 The last thing to check is the fact that $\mu_{k}$ goes to zero like $\frac1k$.
 First we check that for every $k\ge1$
 \begin{equation}
   \label{eq:magic}
   \frac{t_{k}}{t_{k+1} - 1} \le 1 + \frac{1}{t_{k+1}(t_{k+1} - 1)}.
 \end{equation}
 This can be seen via
 \begin{equation}
   (t_{k} + 1) t_{k+1} \le {(t_{k} + 1)}^2 = t_{k+1}^2 + 1 \quad \forall k \geq 1.
 \end{equation}
 By bringing $t_{k+1}$ to the other side we get that
 \begin{equation}
   t_{k+1}t_{k} \le t_{k+1}^2 - t_{k+1} + 1,
 \end{equation}
 from which we can deduce~\eqref{eq:magic} by dividing by $t_{k+1}^2 - t_{k+1}$.

We plug in the estimate~\eqref{eq:magic} in~\eqref{eq:recursion-assumption-mu} and get for every $k \geq 2$
 \begin{equation}
   \begin{aligned}
     \mu_{k} &= \mu_1\frac{\prod_{j=1}^{k-1} t_j}{\prod_{j=1}^{k-1}(t_{j+1} - 1)} \frac{1}{t_{k}} \\
             &\le \mu_1 \prod_{j=1}^{k-1} \left( 1 + \frac{1}{t_{j+1}(t_{j+1} - 1)} \right) \frac{1}{t_{k}} \le \mu_1 \prod_{j=1}^{k-1}\left( 1 + \frac{4}{(j+2)j} \right) \frac{1}{t_{k}} \\
             &\le \mu_1 \prod_{j=1}^{k-1} \left( 1 + \frac{4}{j^2}\right) \frac{1}{t_{k}} \le \mu_1 \exp\left(\frac{\pi^2 4}{6}\right) \frac{1}{t_{k}} = b \lVert K \rVert^2\exp\left(\frac{\pi^2 4}{6}\right) \frac{1}{t_{k}}.
   \end{aligned}
 \end{equation}
 With the above inequalities we can to deduce the claimed convergence rates. First note that from Theorem~\ref{thm:smoothing_accelerated_1k} we have
  \begin{equation}
    F(x_{N}) - F(x^*) \le \frac{\lVert x_{0} - x^*\rVert^2 }{2\gamma_{N} t_{N}^{2}} + \mu_{N}\frac{L_{g}^2}{2} \quad \forall N \geq 1.
  \end{equation}
  Now, in order to obtain the desired conclusion, we used the above estimates and deduce for every $N \geq 1$
  \begin{equation}
    \begin{aligned}
      \frac{\lVert x_{0} - x^*\rVert^2 }{2\gamma_{N} t_{N}^{2}} + \mu_{N}\frac{L_{g}^2}{2} &\le 
      \frac{\lVert x_{0} - x^*\rVert^2 }{2 b t_{N}} + \frac{bL_{g}^2\lVert K \rVert^2}{2 t_N} \exp\left(\frac{4 \pi^2}{6}\right) \\
      &\le \frac{\lVert x_{0} - x^*\rVert^2 }{b (N+1)} + \frac{bL_{g}^2\lVert K \rVert^2}{(N+1)} \exp\left(\frac{4 \pi^2}{6}\right),
    \end{aligned}
  \end{equation}
  where we used that
  \begin{equation}
    \gamma_N t_N = \frac{\mu_N t_N}{\lVert K \rVert^2} \ge b.
  \end{equation}
\end{proof}

\begin{remark}
  Consider the choice (see~\cite{nesterov83})
  \begin{equation}
    t_1 = 1, \quad t_{k+1} = \frac{1 + \sqrt{1 + 4t_{k}^2}}{2} \quad \forall k \ge 1
  \end{equation}
and 
$$\mu_1 = b  \lVert K \rVert^2, \ \text{for} \ b >0.$$
Since
  \begin{equation}
    t_{k}^2 = t_{k+1}^2 - t_{k+1} \quad \forall k \geq 1,
  \end{equation}
we see that in this setting we have to choose
  \begin{equation}
    \mu_{k} = b \lVert K \rVert^2 \ \text{and} \ \gamma_k = b \quad \forall k \ge 1.
  \end{equation}
  Thus, the sequence of optimal function values ${(F(x_N))}_{N \geq 1}$ approaches a $b \lVert K \rVert^2 \frac{L_g}{2}$-approximation of the optimal objective value $F(x^*)$ with a convergence rate of $\mathcal{O}(\frac{1}{N^2})$, i.e.\
  \begin{equation}
    F(x_{N}) - F(x^*) \le 2\frac{\lVert x_{0} - x^*\rVert^2}{b {(N+1)}^2} + b\frac{\lVert K \rVert^2L_{g}^2}{2} \quad \forall N \ge 1.
  \end{equation}
\end{remark}

\section{Stochastic Method}%
\label{sec:var-smoothing-stochastic}

\begin{problem}%
  \label{prob:pd-lipschitz-stochastic}
  The problem is the same as in the deterministic case
  \begin{equation}
    \min_{x \in \H{}} f(x) + g(Kx)
  \end{equation}
  other than the fact that at each iteration we are only given a stochastic estimator of the quantity
  \begin{equation}
    \nabla (\sm{k}g \circ K)(\cdot) = K^* \prox{\frac{1}{\mu_{k}}g^*}{\frac{1}{\mu_{k}}K\cdot} \quad \forall k \geq 1.
  \end{equation}
\end{problem}
For the stochastic quantities arising in this section we will use the following notation. For every $k \geq 0$, we denote by $\sigma(x_0, \dots, x_k)$ the smallest $\sigma$-algebra generated by the family of random variables $\{x_0, \dots, x_k\}$ and by $\E_k(\cdot) := \E(\cdot | \sigma(x_0, \dots, x_k))$ the conditional expectation with respect to this $\sigma$-algebra.

\begin{algo}[stochastic Variable Accelerated SmooThing (sVAST)]%
  \label{alg:variable_smoothing_stochastic}
  Let $y_0 = x_0 \in \H, {(\mu_{k})}_{k \geq 1}$ a sequence of positive and nonincreasing real numbers, and ${(t_{k})}_{k \geq 1}$ a sequence of real numbers with $t_1=1$ and $t_k \geq 1$ for every $k \geq 2$. Consider the following iterative scheme
  \begin{equation*}
    (\forall k \geq 1) \quad 
    \left\lfloor \begin{array}{l}
      L_{k} = \frac{\lVert K \rVert^2}{\mu_{k}} \\
      \gamma_{k} = \frac{1}{L_{k}} \\
      x_{k} = \prox{\gamma_{k}f}{y_{k-1} - \gamma_{k}\xi_{k-1}}\\
      y_{k} = x_{k} + \frac{t_{k}-1}{t_{k+1}}(x_{k} - x_{k-1}),
    \end{array}\right.
  \end{equation*}
where we make the standard assumptions about our gradient estimator of being unbiased, i.e.\
  \begin{equation}
    \E_{k} (\xi_{k}) = \nabla (\sm{k+1}g \circ K)(y_{k}),
  \end{equation}
  and having bounded variance
  \begin{equation}
    \E_{k} \left(\lVert \xi_{k} - \nabla (\sm{k+1}g \circ K)(y_{k}) \rVert^2 \right) \le \sigma^2
  \end{equation}
  for every $k\ge0$.
\end{algo}
Note that we use the same notations as in the deterministic case
\begin{equation*}
  u_{k} := x_{k-1} + t_{k}(x_{k} - x_{k-1}) \ \mbox{and} \
  F^{k}(\cdot) := f + \sm{k}g\circ K \quad \forall k \geq 1.
\end{equation*}

\begin{lemma}%
  \label{lem:lipschitz-envelope}
  Let $g: \H \to \R$ be a convex and $L_g$-Lipschitz continuous function. Then its Moreau envelope $\sm{}g$ satisfies for every $x,y \in \H$
  \begin{equation}
    \lvert \sm{}g(x) - \sm{}g(y) \rvert \le L_{g}\lVert x-y \rVert + \frac{\mu L_{g}^2}{2}.
  \end{equation}
\end{lemma}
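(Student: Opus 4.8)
The plan is to obtain the estimate directly from the two-sided bound relating $\sm{}g$ to $g$ together with the Lipschitz continuity of $g$ itself. Concretely, fix $x,y\in\H$. By Lemma~\ref{lem:smoothing-and-no-smoothing-lipschitz-constant} applied at $x$ we have $\sm{}g(x)\le g(x)$, and applied at $y$ we have $g(y)\le \sm{}g(y)+\mu\frac{L_g^2}{2}$, i.e.\ $\sm{}g(y)\ge g(y)-\mu\frac{L_g^2}{2}$. Subtracting these two inequalities gives
\begin{equation*}
  \sm{}g(x)-\sm{}g(y)\le g(x)-g(y)+\mu\frac{L_g^2}{2}\le L_g\lVert x-y\rVert+\mu\frac{L_g^2}{2},
\end{equation*}
where the last step uses that $g$ is $L_g$-Lipschitz.

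Interchanging the roles of $x$ and $y$ yields $\sm{}g(y)-\sm{}g(x)\le L_g\lVert x-y\rVert+\mu\frac{L_g^2}{2}$ by the same argument, and combining the two one-sided bounds gives $\lvert\sm{}g(x)-\sm{}g(y)\rvert\le L_g\lVert x-y\rVert+\mu\frac{L_g^2}{2}$, which is the claim.

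There is essentially no obstacle here: the argument is a short sandwich estimate, and the only ingredients are Lemma~\ref{lem:smoothing-and-no-smoothing-lipschitz-constant} and the hypothesis on $g$. I note that one could even establish the sharper statement that $\sm{}g$ is itself $L_g$-Lipschitz (without the additive $\mu L_g^2/2$ term), since by Lemma~\ref{lem:grad-smooth-is-prox} and Lemma~\ref{lem:dom-bounded} the gradient satisfies $\nabla(\sm{}g)(x)=\prox{\frac1\mu g^*}{x/\mu}\in\dom g^*\subseteq B(0,L_g)$, so $\lVert\nabla(\sm{}g)(x)\rVert\le L_g$ for all $x$; however, the weaker bound stated in the lemma is all that is needed in the sequel and follows immediately as above.
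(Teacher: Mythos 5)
Your proof is correct and follows essentially the same route as the paper: both derive the one-sided estimate from the sandwich bound of Lemma~\ref{lem:smoothing-and-no-smoothing-lipschitz-constant}, invoke the Lipschitz continuity of $g$, and conclude by symmetry in $x$ and $y$. Your closing observation that $\sm{}g$ is in fact $L_g$-Lipschitz (since $\nabla(\sm{}g)$ takes values in $\dom\, g^* \sub B(0,L_g)$) is also correct and sharper, but, as you note, not needed for the sequel.
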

\begin{proof} 
  From Lemma~\ref{lem:smoothing-and-no-smoothing-lipschitz-constant} we deduce for every $x,y \in \H$
  \begin{equation}
    \sm{}g(x) - \sm{}g(y)  \le g(x) - g(y) + \frac{\mu L_{g}^2}{2} \le | g(x) - g(y) | + \frac{\mu L_{g}^2}{2}
  \end{equation}
  and, analogously,
  \begin{equation}
    \sm{}g(y) - \sm{}g(x)  \le g(y) - g(x) + \frac{\mu L_{g}^2}{2} \le | g(x) - g(y) | + \frac{\mu L_{g}^2}{2}.
  \end{equation}
  The statement of the lemma follows.
\end{proof}

\begin{lemma}%
  \label{lem:descent-comb-stoch}
The following statement holds for every $z \in \H$ and every $k \ge 0$
  \begin{equation}
    \label{eq:descent-comb}
    \begin{aligned}
      \E_{k} \left(F^{k+1}(x_{k+1}) + \frac{\lVert x_{k+1} - z \rVert^2}{2 \gamma_{k+1}}\right) \le & \ F^{k+1}(z) + \frac{\lVert z - y_{k} \rVert^2}{2 \gamma_{k+1}} \\
     & \ + \gamma_{k+1}\left(\sigma^2 + \frac{\lVert K \rVert^2}{2}\right)+ \mu_{k+1}\frac{L_{g}^2}{2}.
    \end{aligned}
  \end{equation}
\end{lemma}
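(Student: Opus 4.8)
The plan is to run, along the stochastic iteration, the same chain of inequalities that underlies Lemma~\ref{lem:double-helpful-deterministic}, with the exact gradient of $\sm{k+1}g\circ K$ replaced by the oracle $\xi_k$, and to absorb the resulting error terms after passing to the conditional expectation $\E_k$.

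First I would use that, by definition of the proximal map, $x_{k+1}=\prox{\gamma_{k+1}f}{y_k-\gamma_{k+1}\xi_k}$ minimizes the $\tfrac{1}{\gamma_{k+1}}$-strongly convex function $x\mapsto f(x)+\tfrac{1}{2\gamma_{k+1}}\lVert x-y_k+\gamma_{k+1}\xi_k\rVert^2$. Writing down its second-order growth inequality at an arbitrary $z\in\H$ and expanding the squares (the term $\tfrac{\gamma_{k+1}}{2}\lVert\xi_k\rVert^2$ cancels on both sides) gives
\begin{equation*}
  f(x_{k+1})+\langle\xi_k,x_{k+1}-y_k\rangle+\frac{\lVert x_{k+1}-y_k\rVert^2}{2\gamma_{k+1}}+\frac{\lVert x_{k+1}-z\rVert^2}{2\gamma_{k+1}}\le f(z)+\langle\xi_k,z-y_k\rangle+\frac{\lVert z-y_k\rVert^2}{2\gamma_{k+1}}.
\end{equation*}
Since by~\eqref{eq:grad-prox} the map $\sm{k+1}g\circ K$ is convex with $\tfrac{1}{\gamma_{k+1}}$-Lipschitz gradient, the descent lemma yields $\sm{k+1}g(Kx_{k+1})\le\sm{k+1}g(Ky_k)+\langle\nabla(\sm{k+1}g\circ K)(y_k),x_{k+1}-y_k\rangle+\tfrac{1}{2\gamma_{k+1}}\lVert x_{k+1}-y_k\rVert^2$. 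Adding the two inequalities, the two copies of $\tfrac{1}{2\gamma_{k+1}}\lVert x_{k+1}-y_k\rVert^2$ cancel and $F^{k+1}(x_{k+1})=f(x_{k+1})+\sm{k+1}g(Kx_{k+1})$ emerges on the left-hand side.

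Next I would introduce the zero-conditional-mean, bounded-variance error $e_k:=\xi_k-\nabla(\sm{k+1}g\circ K)(y_k)$, substitute $\xi_k=\nabla(\sm{k+1}g\circ K)(y_k)+e_k$ into the two remaining inner products, and use the gradient inequality $\sm{k+1}g(Ky_k)+\langle\nabla(\sm{k+1}g\circ K)(y_k),z-y_k\rangle\le\sm{k+1}g(Kz)$ to produce $F^{k+1}(z)$ on the right. All deterministic-gradient contributions then cancel, and one is left with
\begin{equation*}
  F^{k+1}(x_{k+1})+\frac{\lVert x_{k+1}-z\rVert^2}{2\gamma_{k+1}}\le F^{k+1}(z)+\frac{\lVert z-y_k\rVert^2}{2\gamma_{k+1}}+\langle e_k,z-x_{k+1}\rangle,
\end{equation*}
valid for every $z\in\H$. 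The two extra summands $\gamma_{k+1}\tfrac{\lVert K\rVert^2}{2}=\tfrac{\mu_{k+1}}{2}$ and $\mu_{k+1}\tfrac{L_g^2}{2}$ of the claimed bound can be regarded simply as harmless nonnegative slack; alternatively, in a slightly coarser variant of this computation they are generated directly, by estimating the cross-term $\langle\nabla(\sm{k+1}g\circ K)(y_k),x_{k+1}-y_k\rangle$ with Young's inequality together with $\lVert\nabla\sm{k+1}g(\cdot)\rVert\le L_g$ from Lemma~\ref{lem:dom-bounded}, and by routing the $z$-side through the inequality~\eqref{eq:smoothing-and-no-smoothing-with-K} and Lemma~\ref{lem:smoothing-and-no-smoothing-lipschitz-constant}.

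Finally I would take $\E_k$. Because $y_k$ and $z$ are $\sigma(x_0,\dots,x_k)$-measurable and $\E_k(e_k)=0$, the contribution $\langle e_k,z-y_k\rangle$ of the last term has vanishing conditional expectation. The genuinely stochastic contribution $-\langle e_k,x_{k+1}\rangle$ cannot be handled this way, since $x_{k+1}$ depends on $\xi_k$; here I would compare $x_{k+1}$ with the deterministic shadow step $\widehat{x}_{k+1}:=\prox{\gamma_{k+1}f}{y_k-\gamma_{k+1}\nabla(\sm{k+1}g\circ K)(y_k)}$, which is $\sigma(x_0,\dots,x_k)$-measurable so that $\E_k\langle e_k,\widehat{x}_{k+1}\rangle=0$, and use the nonexpansiveness of the proximal map, $\lVert x_{k+1}-\widehat{x}_{k+1}\rVert\le\gamma_{k+1}\lVert e_k\rVert$, with Cauchy--Schwarz and the variance bound to get $\E_k\langle e_k,\widehat{x}_{k+1}-x_{k+1}\rangle\le\gamma_{k+1}\E_k\lVert e_k\rVert^2\le\gamma_{k+1}\sigma^2$. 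Adding the nonnegative remainder $\gamma_{k+1}\tfrac{\lVert K\rVert^2}{2}+\mu_{k+1}\tfrac{L_g^2}{2}$ then yields the statement. The hard part is precisely this last step: decoupling the oracle noise $e_k$ from the updated iterate $x_{k+1}$ is the one place the stochastic argument genuinely departs from the deterministic one, everything preceding it being the deterministic computation verbatim.
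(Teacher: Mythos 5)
Your argument is correct, and the bound you obtain, namely $\E_k\bigl(F^{k+1}(x_{k+1})+\tfrac{1}{2\gamma_{k+1}}\lVert x_{k+1}-z\rVert^2\bigr)\le F^{k+1}(z)+\tfrac{1}{2\gamma_{k+1}}\lVert z-y_k\rVert^2+\gamma_{k+1}\sigma^2$, is in fact sharper than the one asserted; the remaining summands $\gamma_{k+1}\tfrac{\lVert K\rVert^2}{2}+\mu_{k+1}\tfrac{L_g^2}{2}$ are indeed just nonnegative slack in your derivation. The route is genuinely different from the paper's. You apply the descent lemma for $\sm{k+1}g\circ K$ directly at the final iterate $x_{k+1}$, which forces you to confront the bilinear noise term $\langle e_k,z-x_{k+1}\rangle$ whose conditional expectation does not vanish; you resolve this with the shadow iterate $\widehat{x}_{k+1}$ and nonexpansiveness of the proximal map, paying $\gamma_{k+1}\sigma^2$. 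The paper instead isolates the forward point $z_{k+1}=y_k-\gamma_{k+1}\xi_k$, applies the descent lemma and the gradient inequality there (where all conditional expectations are computable because $z_{k+1}$ is an affine function of $\xi_k$), and only afterwards transfers $\sm{k+1}g(Kz_{k+1})$ to $\sm{k+1}g(Kx_{k+1})$ via the approximate Lipschitz estimate of Lemma~\ref{lem:lipschitz-envelope} followed by Young's inequality; this transfer is exactly what generates the two extra terms $\gamma_{k+1}\tfrac{\lVert K\rVert^2}{2}$ and $\mu_{k+1}\tfrac{L_g^2}{2}$, while the noise enters through $\E_k\lVert\xi_k\rVert^2=\lVert\nabla(\sm{k+1}g\circ K)(y_k)\rVert^2+\E_k\lVert e_k\rVert^2$. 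Your decoupling trick buys a cleaner statement and avoids Lemma~\ref{lem:lipschitz-envelope} altogether; the paper's splitting avoids having to reason about the measurability of $x_{k+1}$ at the cost of the additional error terms. The only cosmetic caveat is your side remark that the two extra summands could ``alternatively'' be generated directly by Young's inequality and \eqref{eq:smoothing-and-no-smoothing-with-K}; as sketched that variant is not worked out and is not needed, since treating them as slack already completes the proof.
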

\begin{proof}
  Here we have to proceed a little bit different from Lemma~\ref{lem:double-helpful-deterministic}.
  Namely, we have to treat the gradient step and the proximal step differently.
  For this purpose we define the auxiliary variable
  \begin{equation}
    z_{k} := y_{k-1} - \gamma_{k} \xi_{k-1} \quad \forall k\ge 1.
  \end{equation}
 Let be $k \geq 1$ fixed. From the gradient step we get
  \begin{equation}
    \begin{aligned}
      \lVert z - z_{k} \rVert^2 &= \lVert y_{k-1} - \gamma_{k}\xi_{k-1} - z \rVert^2 \\
      &= \lVert y_{k-1} - z \rVert^2 - 2\gamma_{k} \left\langle \xi_{k-1}, y_{k-1} - z \right\rangle + \gamma^2_{k}\lVert \xi_{k-1} \rVert^2.
    \end{aligned}
  \end{equation}
  Taking the conditional expectation gives
  \begin{equation}
    \begin{aligned}
      \E_{k-1}\left(\lVert z - z_{k} \rVert^2\right) \!= \lVert y_{k-1} - z \rVert^2 \! - 2\gamma_{k} \left\langle \nabla (\sm{k}g \circ K)(y_{k-1}), y_{k-1} - z \right\rangle \! + \gamma^2_{k}\E_{k-1} \left(\lVert \xi_{k-1} \rVert^2\right).
    \end{aligned}
  \end{equation}
  Using the gradient inequality we deduce
  \begin{equation}
    \begin{aligned}
      \E_{k-1}\left(\lVert z - z_{k} \rVert^2\right) \le & \  \lVert y_{k-1} - z \rVert^2 - 2\gamma_{k}((\sm{k}g \circ K)(y_{k-1}) - (\sm{k}g \circ K)(z))\\
& \ + \gamma^2_{k}\E_{k-1} \left(\lVert \xi_{k-1} \rVert^2\right)
    \end{aligned}
  \end{equation}
  and therefore
  \begin{equation}
    \label{eq:pure-grad-step}
    \begin{aligned}
      \gamma_{k}(\sm{k}g \circ K)(y_{k-1}) + \frac12\E_{k-1}\left(\lVert z - z_{k} \rVert^2\right) \le & \ \frac12\lVert y_{k-1} - z \rVert^2 + \gamma_{k}(\sm{k}g \circ K)(z) \\
& \ + \frac{\gamma^2_{k}}{2}\E_{k-1}\left(\lVert \xi_{k-1} \rVert^2\right).
    \end{aligned}
  \end{equation}
  Also from the smoothness of $(\sm{k}g \circ K)$ we deduce via the Descent Lemma that
  \begin{equation}
    \label{eq:descent-lemma}
    \begin{aligned}
      \sm{k}g (Kz_{k}) \le \sm{k}g (Ky_{k-1}) + \left\langle \nabla (\sm{k}g \circ K)(y_{k-1}), z_{k} - y_{k-1} \right\rangle + \frac{L_{k}}{2} \lVert z_{k} - y_{k-1} \rVert^2.
    \end{aligned}
  \end{equation}
  Plugging in the definition of $z_{k}$ and using the fact that $L_{k} = \frac{1}{\gamma_{k}}$ we get
  \begin{equation}
    \begin{aligned}
      \sm{k}g (Kz_{k}) \le \sm{k}g (Ky_{k-1}) - \gamma_{k} \left\langle \nabla (\sm{k}g \circ K)(y_{k-1}), \xi_{k-1} \right\rangle + \frac{\gamma_{k}}{2} \lVert \xi_{k-1} \rVert^2.
    \end{aligned}
  \end{equation}
  Now we take the conditional expectation to deduce that
  \begin{equation}
    \label{eq:descent-lemma-finished}
    \begin{aligned}
      \E_{k-1}(\sm{k}g (Kz_{k})) \le \sm{k}g (Ky_{k-1}) - \gamma_{k} \left\lVert \nabla (\sm{k}g \circ K)(y_{k-1}) \right\rVert^2 + \frac{\gamma_{k}}{2} \E_{k-1}\left(\lVert \xi_{k-1} \rVert^2\right).
    \end{aligned}
  \end{equation}
  Multiplying~\eqref{eq:descent-lemma-finished} by $\gamma_{k}$ and adding it to~\eqref{eq:pure-grad-step} gives
  \begin{equation}
    \begin{aligned}
      \gamma_{k} \E_{k-1} \left(\sm{k}g (Kz_{k})\right) + \frac12\E_{k-1}\left(\lVert z - z_{k} \rVert^2\right) & \le \\
      \gamma_{k}\sm{k}g (Kz) + \frac12\lVert y_{k-1} - z \rVert^2 - \gamma_{k}^2 \left\lVert \nabla (\sm{k}g \circ K)(y_{k-1}) \right\rVert^2 + \gamma_{k}^2 \E_{k-1}\left(\lVert \xi_{k-1} \rVert^2\right). &
    \end{aligned}
  \end{equation}
  Now we use the assumption about the bounded variance to deduce that
  \begin{equation}
    \label{eq:grad-step-finished-stoch}
    \begin{aligned}
      \gamma_{k}\E_{k-1}\left(\sm{k}g (Kz_{k})\right) + \frac12\E_{k-1}\left(\lVert z - z_{k} \rVert^2\right) \le 
      \gamma_{k}\sm{k}g (Kz) + \frac12\lVert y_{k-1} - z \rVert^2 + \gamma_{k}^2 \sigma^2.
    \end{aligned}
  \end{equation}
  Next up for the proximal step we deduce
  \begin{equation}
    \label{eq:prox-step-stoch}
    f(x_{k}) + \frac{1}{2 \gamma_{k}}\lVert x_{k} - z_{k} \rVert^2 + \frac{1}{2 \gamma_{k}}\lVert x_{k} - z \rVert^2 \le f(z) + \frac{1}{2 \gamma_{k}} \lVert z - z_{k} \rVert^2.
  \end{equation}
Taking the conditional expectation and combining~\eqref{eq:grad-step-finished-stoch} and~\eqref{eq:prox-step-stoch} we get
  \begin{equation}
    \begin{aligned}
      \E_{k-1} \left(\gamma_{k}(\sm{k}g(Kz_{k}) + f(x_{k})) + \frac12 \lVert x_{k} - z_{k} \rVert^2 +\frac12 \lVert x_{k} - z \rVert^2 \right) & \le \\
      \gamma_{k}F^{k}(z) +\frac12\lVert y_{k-1} - z \rVert^2 + \gamma^2_{k}\sigma^2. &
    \end{aligned}
  \end{equation}
 From here, using now Lemma~\ref{lem:lipschitz-envelope}, we get that
  \begin{equation}
    \begin{aligned}
      \E_{k-1} \left(\gamma_{k}F^{k}(x_{k}) - \gamma_{k}\lVert K \rVert \lVert x_{k} - z_{k} \rVert - \mu_{k}\gamma_{k}\frac{L_{g}^2}{2} + \frac12 \lVert x_{k} - z_{k} \rVert^2 +\frac12 \lVert x_{k} - z \rVert^2 \right)& \le \\
      \gamma_{k}F^{k}(z) +\frac12\lVert y_{k-1} - z \rVert^2 + \gamma^2_{k}\sigma^2. &
    \end{aligned}
  \end{equation}
  Now we use
  \begin{equation}
      - \frac12\gamma_{k}^2 \lVert K \rVert^2 \le \frac12\lVert x_{k} - z_{k} \rVert^2 - \gamma_{k}\lVert K \rVert \lVert x_{k} - z_{k} \rVert   
  \end{equation}
  to obtain that
  \begin{equation}
    \begin{aligned}
      \E_{k-1} \left(\gamma_{k}F^{k}(x_{k})   + \frac12 \lVert x_{k} - z \rVert^2 \right)
     & \le \\
 \gamma_{k}F^{k}(z) +\frac12\lVert y_{k-1} - z \rVert^2 + \gamma^2_{k}\sigma^2 + \mu_{k}\gamma_{k}\frac{L_{g}^2}{2} + \frac12 \gamma_{k}^2 \lVert K \rVert^2. &
    \end{aligned}
  \end{equation}
\end{proof} 

\begin{lemma}%
  \label{lem:initial-stochastic}
  Let $x^*$ be an optimal solution of Problem~\ref{prob:pd-lipschitz-stochastic}. Then it holds
  \begin{equation}
    \begin{aligned}
      \E \left(\gamma_{1}(F^{1}(x_{1}) - F^{1}(x^*))\right) + \frac12 \lVert u_{1} - x^* \rVert^2
      \le \frac12\lVert x_{0} - x^* \rVert^2 + \gamma^2_{1}\sigma^2 + \mu_{1}\gamma_{1}\frac{L_{g}^2}{2} + \frac12 \gamma_{1}^2 \lVert K \rVert^2.
    \end{aligned}
  \end{equation}
\end{lemma}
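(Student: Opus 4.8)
The plan is to read off this inequality as the base case $k=0$ of the stochastic descent estimate already established in Lemma~\ref{lem:descent-comb-stoch}. Concretely, I would invoke Lemma~\ref{lem:descent-comb-stoch} with $z := x^*$ and $k := 0$, obtaining
\begin{equation*}
  \E_{0}\!\left(F^{1}(x_{1}) + \frac{\lVert x_{1} - x^* \rVert^2}{2 \gamma_{1}}\right) \le F^{1}(x^*) + \frac{\lVert x^* - y_{0} \rVert^2}{2 \gamma_{1}} + \gamma_{1}\!\left(\sigma^2 + \frac{\lVert K \rVert^2}{2}\right) + \mu_{1}\frac{L_{g}^2}{2}.
\end{equation*}

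Next I would multiply both sides by $\gamma_{1} > 0$ and use the two conventions recorded after the statement of Algorithm~\ref{alg:variable_smoothing_stochastic}: since $t_{1} = 1$ one has $u_{1} = x_{0} + t_{1}(x_{1} - x_{0}) = x_{1}$, hence $\lVert x_{1} - x^* \rVert^2 = \lVert u_{1} - x^* \rVert^2$; and by initialization $y_{0} = x_{0}$, so the term $\tfrac12\lVert x^* - y_{0}\rVert^2$ becomes $\tfrac12\lVert x_{0} - x^*\rVert^2$. Because $x_{0}$ is deterministic the $\sigma$-algebra $\sigma(x_{0})$ is trivial, so $\E_{0}(\cdot) = \E(\cdot)$. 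Subtracting the deterministic quantity $\gamma_{1} F^{1}(x^*)$ from both sides and writing $\gamma_{1}^2\bigl(\sigma^2 + \tfrac{\lVert K \rVert^2}{2}\bigr) = \gamma_{1}^2 \sigma^2 + \tfrac12 \gamma_{1}^2 \lVert K \rVert^2$ then gives exactly the asserted estimate.

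I do not expect any real obstacle here: the statement is essentially Lemma~\ref{lem:descent-comb-stoch} evaluated at the first iteration. The only points that need a little care are the index bookkeeping (the descent lemma is phrased with step $k+1$, so the choice $k=0$ is what produces the $\gamma_{1}$, $\mu_{1}$, $F^{1}$ terms) and the observation that it is $F^{1}(x^*)$ — not $F(x^*)$ — that appears on the left-hand side; this is convenient, since at this stage no comparison between the smoothed objective $F^{1}$ and the original objective $F$ is needed, exactly as in Lemma~\ref{lem:initial} for the deterministic algorithm, and that comparison is deferred to the telescoping argument later. (Strictly, the term $\tfrac12\lVert u_{1}-x^*\rVert^2$ on the left is itself a random variable and is to be understood under the expectation, just as its counterpart appears inside $\E_{0}$ above.)
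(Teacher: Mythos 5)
Your proposal is correct and follows exactly the paper's own argument: apply Lemma~\ref{lem:descent-comb-stoch} with $k=0$ and $z=x^*$, multiply by $\gamma_{1}$, use $y_{0}=x_{0}$ and $u_{1}=x_{1}$, and subtract $\gamma_{1}F^{1}(x^*)$. Your side remarks (that $\E_{0}=\E$ since $x_0$ is deterministic, and that the left-hand norm term should be read under the expectation) are accurate and, if anything, slightly more careful than the paper's write-up.
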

\begin{proof}
  Applying the previous lemma with $k=0$ and $z = x^*$, we get that 
  \begin{equation}
    \begin{aligned}
      \E \left(\gamma_{1}F^{1}(x_{1}) + \frac12 \lVert x_{1} - x^* \rVert^2\right)
      \! \le \! \gamma_{1}F^{1}(x^*) +\frac12\lVert y_{0} - x^* \rVert^2 + \gamma^2_{1}\sigma^2 + \mu_{1}\gamma_{1}\frac{L_{g}^2}{2} + \frac12 \gamma_{1}^2 \lVert K \rVert^2.
    \end{aligned}
  \end{equation}
  Therefore, using the fact that $y_{0} = x_{0}$ and $u_{1} = x_{1}$,
  \begin{equation}
    \begin{aligned}
      \E \left(\gamma_{1}(F^{1}(x_{1}) - F^{1}(x^*)) + \frac12 \lVert u_{1} - x^* \rVert^2\right)
      \le \frac12\lVert x_{0} - x^* \rVert^2 + \gamma^2_{1}\sigma^2 + \mu_{1}\gamma_{1}\frac{L_{g}^2}{2} + \frac12 \gamma_{1}^2 \lVert K \rVert^2,
    \end{aligned}
  \end{equation}
  which finishes the proof.
\end{proof}

\begin{theorem}%
  \label{thm:var-smoothing-stoch}
  Consider the setup of Problem~\ref{prob:pd-lipschitz-stochastic} and let ${(x_{k})}_{k \ge 0}$ and ${(y_{k})}_{k \ge 0}$ denote the sequences generated by Algorithm~\ref{alg:variable_smoothing_stochastic}. Assume that for all $k\ge 1$
  \begin{equation}
    \rho_{k+1} := t_{k}^2 - t_{k+1}^2 + t_{k+1} \ge 0.
  \end{equation}
  Then, for every optimal solution $x^*$ of Problem~\ref{prob:pd-lipschitz-stochastic}, it holds 
  \begin{align*}
      \E \left(F(x_{N}) - F(x^*)\right) \le & \ \frac{1}{\gamma_{N} t_{N}^{2}} \frac12\lVert x_{0} - x^*\rVert^2 + 
      \frac{1}{\gamma_{N} t_{N}^{2}}\frac{L_g^2}{2\lVert K \rVert^2}\sum_{k=1}^{N} \mu_{k}^2(t_{k} +
      \rho_{k}) \\
& \ + \frac{1}{\gamma_{N} t_{N}^{2}} \left(\frac{\sigma^2}{\lVert K \rVert^4} +
      \frac{L_g^2+1}{2\lVert K \rVert^2}\right) \sum_{k=1}^{N} t_{k}^2\mu_{k}^2 \quad \forall N \geq 1.
  \end{align*}
\end{theorem}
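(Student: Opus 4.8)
The argument is the familiar accelerated-proximal-gradient template, but with the exact coupling $\left(1-\tfrac1{t_{k+1}}\right)\gamma_{k+1}t_{k+1}^2=\gamma_k t_k^2$ of Theorem~\ref{thm:smoothing_accelerated_1k} relaxed to an inequality. First I would fix $k\ge 0$ and apply Lemma~\ref{lem:descent-comb-stoch} at the point $z:=z_k:=\left(1-\tfrac1{t_{k+1}}\right)x_k+\tfrac1{t_{k+1}}x^*$. As in the computation behind the proof of Lemma~\ref{lem:descent}, the extrapolation step gives $y_k=\left(1-\tfrac1{t_{k+1}}\right)x_k+\tfrac1{t_{k+1}}u_k$, so $z_k-y_k=\tfrac1{t_{k+1}}(x^*-u_k)$ and $x_{k+1}-z_k=\tfrac1{t_{k+1}}(u_{k+1}-x^*)$; hence $\lVert z_k-y_k\rVert^2=t_{k+1}^{-2}\lVert u_k-x^*\rVert^2$ and $\lVert x_{k+1}-z_k\rVert^2=t_{k+1}^{-2}\lVert u_{k+1}-x^*\rVert^2$. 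Multiplying the resulting inequality by $\gamma_{k+1}t_{k+1}^2$ clears the $\gamma$-weights from the distance terms, and bounding $F^{k+1}(z_k)\le\left(1-\tfrac1{t_{k+1}}\right)F^{k+1}(x_k)+\tfrac1{t_{k+1}}F^{k+1}(x^*)$ by convexity of $f$ and $\sm{k+1}g\circ K$, together with $F^{k+1}(x^*)\le F(x^*)$ from Lemma~\ref{lem:smoothing-and-no-smoothing-lipschitz-constant}, produces a one-step estimate whose leading term on the right is $\gamma_{k+1}(t_{k+1}^2-t_{k+1})\bigl(F^{k+1}(x_k)-F(x^*)\bigr)$.

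The next task is to reabsorb this leading term into $\gamma_k t_k^2\bigl(F^{k}(x_k)-F(x^*)\bigr)$, i.e.\ into the value of the Lyapunov functional $W_k:=\gamma_k t_k^2\bigl(F^{k}(x_k)-F(x^*)\bigr)+\tfrac12\lVert u_k-x^*\rVert^2$ (optionally augmented by $\gamma_k t_k^2\mu_k\tfrac{L_g^2}{2}$ so that it is nonnegative). Two discrepancies must be paid for. The smoothing index is corrected by Lemma~\ref{lem:est-diff-smoothing}, $F^{k+1}(x_k)\le F^{k}(x_k)+(\mu_k-\mu_{k+1})\tfrac{L_g^2}{2}$, which is legitimate since $(\mu_k)$ is nonincreasing. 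The coefficient is corrected using the standing hypothesis: $\rho_{k+1}\ge0$ means $t_{k+1}^2-t_{k+1}=t_k^2-\rho_{k+1}\le t_k^2$, and $\mu_{k+1}\le\mu_k$ gives $\gamma_{k+1}\le\gamma_k$, so $\gamma_{k+1}(t_{k+1}^2-t_{k+1})\le\gamma_k t_k^2$. Because $F^{k}(x_k)-F(x^*)$ need not be nonnegative — only $F^{k}(x_k)-F(x^*)\ge F(x_k)-F(x^*)-\mu_k\tfrac{L_g^2}{2}\ge-\mu_k\tfrac{L_g^2}{2}$ by Lemma~\ref{lem:smoothing-and-no-smoothing-lipschitz-constant} — shifting the coefficient costs an error controlled by (coefficient gap)$\,\times\,\mu_k\tfrac{L_g^2}{2}$, and it is precisely here, together with the $\gamma_{k+1}t_{k+1}^2\mu_{k+1}\tfrac{L_g^2}{2}$ slack already carried by Lemma~\ref{lem:descent-comb-stoch}, that the $\mu_k^2\rho_k$- and $\mu_k^2 t_k$-weighted contributions are generated; the noise term $\gamma_{k+1}(\sigma^2+\tfrac{\lVert K\rVert^2}{2})$ gives the $\mu_k^2 t_k^2$ contribution. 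Substituting $\gamma_k=\mu_k/\lVert K\rVert^2$ everywhere turns each error into $\mu_k^2$ times an affine expression in $t_k$, $t_k^2$, $\rho_k$ with the constants $\tfrac{L_g^2}{2\lVert K\rVert^2}$ and $\tfrac{\sigma^2}{\lVert K\rVert^4}+\tfrac{L_g^2+1}{2\lVert K\rVert^2}$.

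This yields a recursion $\E_k\bigl[W_{k+1}\bigr]\le W_k+\varepsilon_k$ with $\varepsilon_k\ge0$ deterministic. Taking total expectations, summing over $k=1,\dots,N-1$, bounding the $k=1$ contribution by Lemma~\ref{lem:initial-stochastic} (which is exactly the $k=0$ instance of the one-step estimate at $z=x^*$), and discarding the nonnegative term $\tfrac12\lVert u_N-x^*\rVert^2$ gives $\gamma_N t_N^2\,\E\bigl[F^N(x_N)-F(x^*)\bigr]\le\tfrac12\lVert x_0-x^*\rVert^2+\sum_{k=1}^{N}\varepsilon_k'$, where the re-indexed $\varepsilon_k'$ are precisely the two sums appearing in the statement. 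Dividing by $\gamma_N t_N^2$ and using $F(x_N)\le F^N(x_N)+\mu_N\tfrac{L_g^2}{2}$ from Lemma~\ref{lem:smoothing-and-no-smoothing-lipschitz-constant}, with the leftover $\mu_N\tfrac{L_g^2}{2}=\tfrac1{\gamma_N t_N^2}\cdot\tfrac{L_g^2}{2\lVert K\rVert^2}\mu_N^2 t_N^2$ absorbed into the $\mu_k^2 t_k^2$ sum, finishes the proof. The main obstacle is not conceptual but the error-term bookkeeping: since the two parameter sequences are no longer linked by an exact identity, one has to keep very careful track of which $\mu_k^2$-weighted monomial in $t_k$ and $\rho_k$ survives — and with which of the three constants — when the possibly negative gap $F^k(x_k)-F(x^*)$ is charged against both the index and the coefficient corrections.
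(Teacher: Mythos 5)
Your proposal follows essentially the same route as the paper's proof: apply the one-step estimate of Lemma~\ref{lem:descent-comb-stoch} at $z=\left(1-\tfrac1{t_{k+1}}\right)x_k+\tfrac1{t_{k+1}}x^*$, use the $u_k$-identities and convexity, correct the smoothing index via Lemma~\ref{lem:est-diff-smoothing}, compensate the possibly negative gap $F^{k}(x_k)-F(x^*)$ by $\mu_k\tfrac{L_g^2}{2}$-terms so the discarded quantities are nonnegative, telescope with Lemma~\ref{lem:initial-stochastic} as the base case, and convert back with Lemma~\ref{lem:smoothing-and-no-smoothing-lipschitz-constant}. The only deviations are cosmetic bookkeeping choices (replacing $F^{k+1}(x^*)$ by $F(x^*)$ early rather than at the end, and absorbing the coefficient $t_{k+1}^2-t_{k+1}$ directly into $t_k^2$ instead of carrying a $\rho_{k+1}$-weighted remainder), which yield the same, or a slightly tighter, bound.
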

\begin{proof}[Proof of Theorem~\ref{thm:var-smoothing-stoch}]
 Let be $k \geq 0$ fixed. Lemma~\ref{lem:descent-comb-stoch} for $z := \left( 1- \frac{1}{t_{k+1}} \right)x_{k} + \frac{1}{t_{k+1}} x^*$ gives
  \begin{align*}
    \E_{k} \left(F^{k+1}(x_{k+1}) + \frac{1}{2 \gamma_{k+1}}\left\lVert \frac{1}{t_{k+1}} u_{k+1} - \frac{1}{t_{k+1}} x^* \right\rVert^2 \right) & \le \\
         F^{k+1}\left(\left( 1- \frac{1}{t_{k+1}} \right)x_{k} + \frac{1}{t_{k+1}} x^*\right) + \frac{1}{2 \gamma_{k+1}}\left\lVert \frac{1}{t_{k+1}} x^* - \frac{1}{t_{k+1}} u_{k} \right\rVert^2  & \\ 
         +\gamma_{k+1}\left(\sigma^2 + \frac{\lVert K \rVert^2}{2}\right) + \mu_{k+1}\frac{L_{g}^2}{2}. &
  \end{align*}
From here and from the convexity of $F^{k+1}$ follows
  \begin{align*}
    \E_{k} \left(F^{k+1}(x_{k+1}) - F^{k+1}(x^*) \right) - \left(1 - \frac{1}{t_{k+1}} \right) (F^{k+1}(x_{k}) - F^{k+1}(x^*)) & \le \\
    \frac{\lVert u_{k} - x^* \rVert^2}{2 \gamma_{k+1} t_{k+1}^2} - \E_{k}\left(\frac{\lVert u_{k+1} - x^* \rVert^2}{2 \gamma_{k+1} t_{k+1}^2}\right) + \gamma_{k+1}\left(\sigma^2 + \frac{\lVert K \rVert^2}{2}\right) + \mu_{k+1}\frac{L_{g}^2}{2}.&
  \end{align*}
  Now, by multiplying both sides with by $t_{k+1}^2$, we deduce
  \begin{equation}
    \begin{aligned}
    \label{eq:lala-stoch}
    \E_{k} \left(t_{k+1}^2 (F^{k+1}(x_{k+1}) - F^{k+1}(x^*)) \right) + (t_{k+1} - t_{k+1}^2)(F^{k+1}(x_{k}) - F^{k+1}(x^*)) & \le\\
    \frac{1}{2 \gamma_{k+1}} \left (\lVert u_{k} - x^* \rVert^2 - \E_{k}\left(\lVert u_{k+1} - x^* \rVert^2 \right) \right)+t_{k+1}^2\gamma_{k+1}\left(\sigma^2 + \frac{\lVert K \rVert^2}{2}\right) + t_{k+1}^2\mu_{k+1}\frac{L_{g}^2}{2}. &
  \end{aligned}
    \end{equation}
  Next, by adding $t_{k}^2(F^{k+1}(x_{k}) - F^{k+1}(x^*))$ on both sides of~\eqref{eq:lala-stoch}, gives
  \begin{align*}
    \E_{k} \left( t_{k+1}^2 (F^{k+1}(x_{k+1}) - F^{k+1}(x^*))\right) + \rho_{k+1}(F^{k+1}(x_{k}) - F^{k+1}(x^*))  & \le \\
    t_{k}^2(F^{k+1}(x_{k}) - F^{k+1}(x^*)) + \frac{1}{2 \gamma_{k+1}} \left(\lVert u_{k} - x^* \rVert^2 - \E_{k} \left(\lVert u_{k+1} - x^* \rVert^2 \right) \right) & \\
      + t_{k+1}^2\gamma_{k+1}\left(\sigma^2 + \frac{\lVert K \rVert^2}{2}\right) + t_{k+1}^2\mu_{k+1}\frac{L_{g}^2}{2}. &
  \end{align*}
  Utilizing~\eqref{eq:est-diff-smoothing} together with the assumption that ${(\mu_{k})}_{k \geq 1}$ is nonincreasing leads to
  \begin{align*}
    \E_{k} \left(t_{k+1}^2 (F^{k+1}(x_{k+1}) - F^{k+1}(x^*)) \right) + \rho_{k+1}(F^{k+1}(x_{k}) - F^{k+1}(x^*)) & \le \\
    t_{k}^2(F^{k}(x_{k}) - F^{k}(x^*)) + \frac{1}{2\gamma_{k+1}} \left (\lVert u_{k} - x^* \rVert^2 - \E_{k} \left(\lVert u_{k+1} - x^* \rVert^2 \right) \right) + t_{k}^2(\mu_{k} - \mu_{k+1})\frac{L_{g}^2}{2}  & \\ 
      + t_{k+1}^2\gamma_{k+1}\left(\sigma^2 + \frac{\lVert K \rVert^2}{2}\right) + t_{k+1}^2\mu_{k+1}\frac{L_{g}^2}{2}. &
  \end{align*}
  Now, using that $t_{k}^2 \ge t_{k+1}^2 - t_{k+1}$, we get
  \begin{align*}
    \E_{k}\left( t_{k+1}^2 (F^{k+1}(x_{k+1}) - F^{k+1}(x^*)) \right) + \rho_{k+1}(F^{k+1}(x_{k}) - F^{k+1}(x^*)) & \le \\
    t_{k}^2(F^{k}(x_{k}) - F^{k}(x^*)) + \frac{1}{2\gamma_{k+1}} (\lVert u_{k} - x^* \rVert^2 - \E_{k}\left(\lVert u_{k+1} - x^* \rVert^2) \right)  & \\
      + t_{k}^2\mu_{k} \frac{L_{g}^2}{2} - t_{k+1}^2 \mu_{k+1}\frac{L_{g}^2}{2} + t_{k+1} \mu_{k+1}\frac{L_{g}^2}{2} & \\
      +t_{k+1}^2\gamma_{k+1}\left(\sigma^2 + \frac{\lVert K \rVert^2}{2}\right) + t_{k+1}^2\mu_{k+1}\frac{L_{g}^2}{2}. &
  \end{align*}
  Multiplying both sides with $\gamma_{k+1}$ and putting all terms on the correct sides yields
  \begin{equation}
    \label{eq:need-to-add-something-so-rho-term-is-positive-stoch}
\begin{aligned}
    \E_{k}\Bigg(\gamma_{k+1}t_{k+1}^2 \left(F^{k+1}(x_{k+1}) - F^{k+1}(x^*) + \mu_{k+1}\frac{L_{g}^2}{2}\right) + \frac12\lVert u_{k+1} - x^* \rVert^2 \Bigg) + & \\ 
      \gamma_{k+1}\rho_{k+1}(F^{k+1}(x_{k}) - F^{k+1}(x^*)) & \le \\
      \gamma_{k+1}t_{k}^2\left(F^{k}(x_{k}) - F^{k}(x^*) + \mu_{k}\frac{L_{g}^2}{2}\right) + \frac12\lVert u_{k} - x^* \rVert^2  + & \\
      + \gamma_{k+1} t_{k+1} \mu_{k+1}\frac{L_{g}^2}{2} 
      +t_{k+1}^2\gamma_{k+1}^2\left(\sigma^2 + \frac{\lVert K \rVert^2}{2}\right) + \gamma_{k+1}t_{k+1}^2\mu_{k+1}\frac{L_{g}^2}{2}. &
  \end{aligned}
\end{equation}
  At this point we would like to discard the term $\gamma_{k+1} \rho_{k+1}(F^{k+1}(x_{k}) - F^{k+1}(x^*))$ which we currently cannot as the positivity of $F^{k+1}(x_{k}) - F^{k+1}(x^*)$ is not ensured.
  So we add $\gamma_{k+1}\rho_{k+1}\mu_{k+1} \frac{L_{g}^2}{2}$ on both sides of~\eqref{eq:need-to-add-something-so-rho-term-is-positive-stoch} and get
  \begin{equation}
    \label{eq:can-now-be-discarded-stoch}
\begin{aligned}
    \E_k\Bigg(\gamma_{k+1}t_{k+1}^2 \left(F^{k+1}(x_{k+1}) - F^{k+1}(x^*) + \mu_{k+1}\frac{L_{g}^2}{2}\right) + \frac12\lVert u_{k+1} - x^* \rVert^2  \Bigg) + & \\ 
      \gamma_{k+1}\rho_{k+1}\left(F^{k+1}(x_{k}) - F^{k+1}(x^*) + \mu_{k+1}\frac{L_{g}^2}{2}\right) & \le \\
      \gamma_{k+1}t_{k}^2\left(F^{k}(x_{k}) - F^{k}(x^*) + \mu_{k}\frac{L_{g}^2}{2} \right) + \frac12\lVert u_{k} - x^* \rVert^2 + & \\
      + \gamma_{k+1} \mu_{k+1}\frac{L_{g}^2}{2}(t_{k+1}+\rho_{k+1})
      +t_{k+1}^2\gamma_{k+1}^2\left(\sigma^2 + \frac{\lVert K \rVert^2}{2}\right) + \gamma_{k+1}t_{k+1}^2\mu_{k+1}\frac{L_{g}^2}{2}. &
  \end{aligned}
\end{equation}
  Using again~\eqref{eq:est-diff-smoothing} to deduce that 
  \begin{equation}
    \gamma_{k+1}\rho_{k+1}\left(F^{k+1}(x_{k}) - F^{k+1}(x^*) + \mu_{k+1}\frac{L_{g}^2}{2}\right) \ge \gamma_{k+1}\rho_{k+1}(F(x_{k}) - F(x^*)) \ge 0
  \end{equation}
  we can now discard said term from~\eqref{eq:can-now-be-discarded-stoch}, giving
  \begin{equation}
    \label{eq:one-more-before-sum-it-up-stoch}
\begin{aligned}
    \E_{k}\left(\gamma_{k+1}t_{k+1}^2 \left(F^{k+1}(x_{k+1}) - F^{k+1}(x^*) + \mu_{k+1}\frac{L_{g}^2}{2}\right) + \frac{1}{2}\lVert u_{k+1} - x^* \rVert^2 \right) & \le \\ 
    \gamma_{k+1}t_{k}^2\left(F^{k}(x_{k}) - F^{k}(x^*) + \mu_{k}\frac{L_{g}^2}{2}\right) + \frac{1}{2}\lVert u_{k} - x^* \rVert^2 & \\
      + \gamma_{k+1} \mu_{k+1}\frac{L_{g}^2}{2}(t_{k+1}+\rho_{k+1})
      +t_{k+1}^2\gamma_{k+1}^2\left(\sigma^2 + \frac{\lVert K \rVert^2}{2}\right) + \gamma_{k+1}t_{k+1}^2\mu_{k+1}\frac{L_{g}^2}{2}. &
   \end{aligned}
\end{equation}
  Last but not least we use the that $F^{k}(x_{k}) - F^{k}(x^*) + \mu_{k}\frac{L_{g}^2}{2} \ge F(x_{k})-F(x^*)\ge 0$ and $\gamma_{k+1} \le \gamma_{k}$ to follow that
  \begin{equation}
    \label{eq:decrease-gamma-stoch}
    \gamma_{k+1}t_{k}^2\left(F^{k}(x_{k}) - F^{k}(x^*) + \mu_{k}\frac{L_{g}^2}{2}\right) \le 
    \gamma_{k}t_{k}^2\left(F^{k}(x_{k}) - F^{k}(x^*) + \mu_{k}\frac{L_{g}^2}{2}\right).
  \end{equation}
  Combining~\eqref{eq:one-more-before-sum-it-up-stoch} and~\eqref{eq:decrease-gamma-stoch} yields
  \begin{equation}
    \label{eq:sum-it-up-stoch}
\begin{aligned}
    \E_{k}\left(\gamma_{k+1}t_{k+1}^2 \left(F^{k+1}(x_{k+1}) - F^{k+1}(x^*) + \mu_{k+1}\frac{L_{g}^2}{2}\right) + \frac12\lVert u_{k+1} - x^* \rVert^2 \right) & \le \\ 
    \gamma_{k}t_{k}^2\left(F^{k}(x_{k}) - F^{k}(x^*) + \mu_{k}\frac{L_{g}^2}{2}\right) + \frac12\lVert u_{k} - x^* \rVert^2  & \\
      + \gamma_{k+1} \mu_{k+1}\frac{L_{g}^2}{2}(t_{k+1}+\rho_{k+1})
      +t_{k+1}^2\gamma_{k+1}^2\left(\sigma^2 + \frac{\lVert K \rVert^2}{2}\right) + \gamma_{k+1}t_{k+1}^2\mu_{k+1}\frac{L_{g}^2}{2}. &
   \end{aligned}
\end{equation}
Let be $N \geq 2$. We take the expected value on both sides~\eqref{eq:sum-it-up-stoch} and sum from $k=1$ to $N-1$. Getting rid of the non-negative terms $\lVert u_{N} - x^* \rVert^2$ gives
\begin{equation}
 \begin{aligned} 
      \E\left(\gamma_{N} t_{N}^{2}\left(F^{N}(x_{N}) - F^{N}(x^*) + \mu_{N}\frac{L_{g}^2}{2}\right) \right) & \le \\
      \E\left(\gamma_{1}\left(F^{1}(x_{1}) - F^{1}(x^*) + \mu_{1}\frac{L_g^2}{2}\right)\right) + \frac12\lVert u_{1}- x^* \rVert^2 + \sum_{k=2}^{N} \gamma_{k} \mu_{k}\frac{L_{g}}{2}(t_{k} + \rho_{k}) & \\
      + \sum_{k=2}^{N} t_{k}^2\gamma_{k}^2\left(\sigma^2 + \frac{\lVert K \rVert^2}{2}\right) + \gamma_{k}t_{k}^2\mu_{k}\frac{L_{g}}{2}. &
 \end{aligned}
 \end{equation}

Since $t_1=1$, the above inequality holds also for $N=1$. Now, using Lemma~\ref{lem:initial-stochastic} we get that for every $N \geq 1$
  \begin{align*}
    \E\left(\gamma_{N} t_{N}^{2}\left(F^{N}(x_{N}) - F^{N}(x^*) + \mu_{N}\frac{L_{g}^2}{2} \right) \right) \le & \ \frac12\lVert x_{0} - x^*\rVert^2 + \sum_{k=1}^{N} \gamma_{k} \mu_{k}\frac{L_g^2}{2}(t_{k} + \rho_{k}) \\ & \ + \sum_{k=1}^{N} t_{k}^2\gamma_{k}^2\left(\sigma^2 + \frac{\lVert K \rVert^2}{2}\right) + \gamma_{k}t_{k}^2\mu_{k}\frac{L_g^2}{2}.
  \end{align*}
  From Lemma~\ref{lem:smoothing-and-no-smoothing-lipschitz-constant} we follow that 
  \begin{equation*}
    \begin{aligned}
      &\gamma_{N} t_{N}^{2} \left( F(x_{N}) - F(x^*) \right) \le \gamma_{N} t_{N}^{2} \left( F^{N}(x_{N}) - F^{N}(x^*) + \mu_{N}\frac{L_g^2}{2} \right),
    \end{aligned}
  \end{equation*}
  therefore, for every $N \geq 1$
  \begin{align*}
    \E\left(\gamma_{N} t_{N}^{2}\left(F^{N}(x_{N}) - F^{N}(x^*) \right) \right) \le & \ \frac12\lVert x_{0} - x^*\rVert^2 + \sum_{k=1}^{N} \gamma_{k} \mu_{k}\frac{L_g^2}{2}(t_{k} + \rho_{k}) \\ & \ + \sum_{k=1}^{N} t_{k}^2\gamma_{k}^2\left(\sigma^2 + \frac{\lVert K \rVert^2}{2}\right) + \gamma_{k}t_{k}^2\mu_{k}\frac{L_g^2}{2}.
  \end{align*}
  By using the fact that $\gamma_{k} = \frac{1}{L_{k}}= \frac{\mu_{k}}{\lVert K \rVert^2}$ for every $k \ge 1$ gives
  \begin{align*}
    \E\left(\gamma_{N} t_{N}^{2}(F(x_{N}) - F(x^*)) \right) \le & \ \frac12\lVert x_{0} - x^*\rVert^2 +
      \frac{L_g^2}{2\lVert K \rVert^2}\sum_{k=1}^{N} \mu_{k}^2(t_{k} + \rho_{k}) \\
& \ + \left(\frac{\sigma^2}{\lVert K \rVert^4} + \frac{L_g^2 + 1}{2 \lVert K \rVert^2}\right) \sum_{k=1}^{N} t_{k}^2\mu_{k}^2 \quad \forall N \geq 1.
  \end{align*}
  Thus,
  \begin{align*}
    \E\left( F(x_{N}) - F(x^*) \right) \le & \ \frac{1}{\gamma_{N} t_{N}^{2}} \frac12\lVert x_{0} - x^*\rVert^2 +
      \frac{1}{\gamma_{N} t_{N}^{2}}\frac{L_g^2}{2\lVert K \rVert^2}\sum_{k=1}^{N} \mu_{k}^2(t_{k} +
      \rho_{k}) \\
& \ + \frac{1}{\gamma_{N} t_{N}^{2}} \left(\frac{\sigma^2}{\lVert K \rVert^4} +
      \frac{L_g^2 + 1}{2\lVert K \rVert^2}\right) \sum_{k=1}^{N} t_{k}^2\mu_{k}^2 \quad \forall N \geq 1.
  \end{align*}
\end{proof}

\begin{corollary}%
  \label{cor:param-choices-stochastic}
  Let
  \begin{equation*}
 t_1=1, \quad t_{k+1} = \frac{1 + \sqrt{1 + 4 t_{k}^2}}{2} \quad \forall k \geq 1,
  \end{equation*}
and, for $b >0$,
\begin{equation*}
   \mu_{k} = \frac{b}{k^{\frac32}} \lVert K \rVert^2, \ \text{and} \ \gamma_k = \frac{b}{k^{\frac32}} \quad \forall k \geq 1.
\end{equation*}
  Then,
  \begin{align*}
   \E\left( F(x_{N}) - F(x^*) \right) \le  & \  2\frac{\lVert x_0 - x^* \rVert^2}{b\sqrt{N}} + 
    L_{g}^2\lVert K \rVert^2 b^2 \frac{\pi^2}{6}\frac{1}{\sqrt{N}} \\
    & \ + 2b^2\left( 2\sigma^2 + L_g^2\lVert K \rVert^2 + \lVert K \rVert^2 \right) \frac{1+\log(N)}{\sqrt{N}} \quad \forall N \geq 1.
  \end{align*}
  Furthermore, we have that $F(x_N)$ converges almost surely to $F(x^*)$ as $N \rightarrow +\infty$.
\end{corollary}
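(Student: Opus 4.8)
The argument has two parts: extracting the explicit rate from Theorem~\ref{thm:var-smoothing-stoch}, and then upgrading convergence in expectation to almost sure convergence by a supermartingale argument.

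For the rate I would first verify the standing hypothesis of Theorem~\ref{thm:var-smoothing-stoch}. The sequence $t_{k+1} = \tfrac{1+\sqrt{1+4t_k^2}}{2}$ satisfies the classical identity $t_{k+1}^2 - t_{k+1} = t_k^2$ for every $k \ge 1$, so $\rho_{k+1} = t_k^2 - t_{k+1}^2 + t_{k+1} = 0 \ge 0$ and $\rho_k = 0$ for every $k \ge 2$. Then I would record two elementary facts: the two-sided bound $\tfrac{k+1}{2} \le t_k \le k$ (a short induction on the recursion, analogous to the one in the proof of Corollary~\ref{cor:param-choices-deterministic}), and, from $\gamma_k = \tfrac{b}{k^{3/2}}$ and $\mu_k = \tfrac{b}{k^{3/2}}\lVert K\rVert^2$, the lower bound $\gamma_N t_N^2 \ge \tfrac{b}{N^{3/2}}\cdot\tfrac{(N+1)^2}{4} \ge \tfrac{b\sqrt N}{4}$, i.e.\ $\tfrac{1}{\gamma_N t_N^2} \le \tfrac{4}{b\sqrt N}$. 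Substituting into the estimate of Theorem~\ref{thm:var-smoothing-stoch} and using $t_k \le k$ gives $\mu_k^2(t_k+\rho_k) \le \tfrac{b^2\lVert K\rVert^4}{k^2}$ and $t_k^2\mu_k^2 \le \tfrac{b^2\lVert K\rVert^4}{k}$, whence $\sum_{k=1}^N \mu_k^2(t_k+\rho_k) \le \tfrac{\pi^2}{6}b^2\lVert K\rVert^4$ and $\sum_{k=1}^N t_k^2\mu_k^2 \le (1+\log N)\,b^2\lVert K\rVert^4$ by the elementary series bounds $\sum_{k\ge1}k^{-2} = \tfrac{\pi^2}{6}$ and $\sum_{k=1}^N k^{-1} \le 1+\log N$. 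Collecting the three contributions yields the claimed bound.

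For the almost sure convergence I would return to the one-step inequality \eqref{eq:sum-it-up-stoch}, which for the present choices ($\rho_{k+1}=0$, $\gamma_{k+1}\le\gamma_k$) reads $\E_k(S_{k+1}) \le S_k + e_k$ with
\[
  S_k := \gamma_k t_k^2\Bigl(F^k(x_k) - F^k(x^*) + \mu_k\tfrac{L_g^2}{2}\Bigr) + \tfrac12\lVert u_k - x^*\rVert^2 \;\ge\; 0
\]
and $e_k \ge 0$ deterministic of order $1/k$ (by the same estimates as above, the dominant contributions being $t_{k+1}^2\gamma_{k+1}^2$ and $\gamma_{k+1}t_{k+1}^2\mu_{k+1}$). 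The obstacle is that $\sum_k e_k = +\infty$, so the Robbins--Siegmund theorem cannot be applied to $S_k$ directly. I would pass instead to the rescaled sequence $V_k := S_k/g_k$ with $g_k := (1+\log k)^2$: then $\E_k(V_{k+1}) \le (1-\delta_k)V_k + \beta_k$ with $\delta_k := \tfrac{g_{k+1}-g_k}{g_{k+1}} \ge 0$ and $\beta_k := \tfrac{e_k}{g_{k+1}}$, and now $\sum_k \beta_k < +\infty$ (since $\sum_k \tfrac{1}{k(\log k)^2} < \infty$) while $\sum_k \delta_k = +\infty$ (since $\sum_k \tfrac{1}{k\log k} = \infty$). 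Robbins--Siegmund then gives that $V_k$ converges almost surely to some finite $V_\infty \ge 0$ and $\sum_k \delta_k V_k < +\infty$ almost surely; combined with $\sum_k \delta_k = +\infty$ this forces $V_\infty = 0$, i.e.\ $V_k \to 0$ almost surely. Finally, $0 \le F(x_k) - F(x^*) \le S_k/(\gamma_k t_k^2) = g_k V_k/(\gamma_k t_k^2)$ and $g_k/(\gamma_k t_k^2) \le \tfrac{4(1+\log k)^2}{b\sqrt k} \to 0$, so $F(x_k) \to F(x^*)$ almost surely.

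The rate is a routine substitution; the point that will require the most care is the almost sure statement, precisely because the perturbation terms $t_k^2\gamma_k^2$ and $\gamma_k t_k^2\mu_k$ decay only like $1/k$ and are therefore not summable. The logarithmic rescaling of the Lyapunov sequence is the device that restores summability of the perturbations and lets the supermartingale convergence argument close.
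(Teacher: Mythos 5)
Your proposal is correct, and for the rate it coincides with the paper's proof: verify $\rho_{k+1}=0$, show $\tfrac{k+1}{2}\le t_k\le k$ by induction, bound $\tfrac{1}{\gamma_N t_N^2}\le\tfrac{4}{b\sqrt N}$, and control the two sums via $\sum_{k\ge1}k^{-2}=\tfrac{\pi^2}{6}$ and $\sum_{k=1}^N k^{-1}\le 1+\log N$. (If you track constants literally you obtain a factor $b$ where the corollary states $b^2$ in the second and third terms; the paper's own display \eqref{eq:estimate-sum} has the same inconsistency, effectively using $\mu_k^2=b^2\lVert K\rVert^2 k^{-3}$ instead of $b^2\lVert K\rVert^4 k^{-3}$, so this is a defect of the stated constants, not of your argument.) Where you genuinely depart from the paper is the almost sure convergence. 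The paper does not rescale by a logarithmic factor: it divides \eqref{eq:sum-it-up-stoch} through by $\gamma_{k+1}t_{k+1}^2$, so the Lyapunov quantity becomes $W_k=F^k(x_k)-F^k(x^*)+\mu_k\tfrac{L_g^2}{2}+\tfrac{1}{\gamma_k t_k^2}\lVert u_k-x^*\rVert^2$ and the perturbation becomes $\tfrac{\mu_{k+1}}{t_{k+1}}\tfrac{L_g^2}{2}+\gamma_{k+1}\bigl(\sigma^2+\tfrac{\lVert K\rVert^2}{2}+\dots\bigr)=O(k^{-3/2})$, which \emph{is} summable; Robbins--Siegmund then applies directly, and the limit is identified as zero from the already-established convergence in expectation. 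So the "obstacle" you describe is an artifact of working with the unnormalized $S_k$ rather than $S_k/(\gamma_k t_k^2)$. That said, your detour is sound and has its own merits: the paper's normalization needs $\gamma_{k+1}t_{k+1}^2\ge\gamma_k t_k^2$ to compare the $\lVert u_k-x^*\rVert^2$ terms across iterations, a monotonicity that actually fails at $k=1$ for these parameters ($\gamma_2 t_2^2\approx 0.93\,b<b=\gamma_1 t_1^2$, though it holds for $k\ge2$), and it also leans on the rate result to identify the limit. Your $(1+\log k)^2$ rescaling, with the limit pinned down through $\sum_k\delta_k V_k<+\infty$ and $\sum_k\delta_k=+\infty$, is self-contained and avoids both of these points, at the cost of an extra device. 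Both routes are valid.
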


\begin{proof}
  First we notice that the choice of $t_{k+1} = \frac{1 + \sqrt{1 + 4 t_{k}^2}}{2}$ fulfills that 
  \begin{equation*}
    \rho_{k+1} = t_{k}^2 - t_{k+1}^2 + t_{k+1} = 0 \quad \forall k \ge 1.
  \end{equation*}
  Now we derive the stated convergence result by first showing via induction that
  \begin{equation*}
   \frac{1}{k} \le \frac{1}{t_{k}}  \le \frac{2}{k} \quad \forall k \geq 1.
  \end{equation*}
Assuming that this holds for $k \geq 1$, we have that
  \begin{equation*}
    t_{k+1} = \frac{1+\sqrt{1+4t_{k}^2}}{2} \le \frac{1+\sqrt{1+4k^2}}{2} \le \frac{1+\sqrt{1+4k+4k^2}}{2} = k+1
  \end{equation*}
  and
  \begin{equation*}
    t_{k+1} =\frac{1+\sqrt{1+4t_{k}^2}}{2} \ge\frac{1+\sqrt{1+4{(\frac{k}{2})}^2}}{2} \ge \frac{1+\sqrt{k^2}}{2}\ge \frac{k+1}{2}.
  \end{equation*}
Furthermore, for every $N \geq 1$ we have that
  \begin{equation}
    \label{eq:estimate-sum}
    \begin{aligned}
      \frac{1}{\gamma_{N} t_{N}^{2}}\frac{L_g^2}{2\lVert K \rVert^2}\sum_{k=1}^{N} \mu_{k}^2(t_{k} + \rho_{k}) \le &
      \ \frac{2}{\sqrt{N}}\frac{L_g^2}{2\lVert K \rVert^4}\sum_{k=1}^{N} \frac{b^2 \lVert K \rVert^2}{k^{3}}k  = \frac{b^2 L_g^2 \lVert K \rVert^2}{\sqrt{N}}\sum_{k=1}^{N} k^{-2} \\
\le & \ \frac{b^2 L_g^2 \lVert K \rVert^2}{\sqrt{N}}\sum_{k=1}^{+\infty} k^{-2} = L_{g}^2b^2\lVert K \rVert^2 \frac{\pi^2}{6}\frac{1}{\sqrt{N}}.
    \end{aligned}
  \end{equation}
  The statement of the convergence rate in expectation follows now by plugging in our parameter choices into the statement of Theorem~\ref{thm:var-smoothing-stoch}, using the estimate~\eqref{eq:estimate-sum} and checking that
  \begin{equation}
    \sum_{k=1}^{N} t_{k}^2 \mu_{k}^2 \le b^2 \lVert K \rVert^2 \sum_{k=1}^{N} \frac{1}{k} \le b^2 \lVert K \rVert^2 (1+\log(N)) \quad \forall N \geq 1.
  \end{equation}
  The almost sure convergence of ${(F(x_N))}_{N \geq 1}$ can be deduced by looking at~\eqref{eq:sum-it-up-stoch} and dividing by $\gamma_{k+1}t_{k+1}^2$, which gives for every $k \geq 0$
\begin{equation}
\begin{aligned}
    & \E_{k}\left(F^{k+1}(x_{k+1}) - F^{k+1}(x^*) + \mu_{k+1}\frac{L_{g}^2}{2} + \frac{1}{\gamma_{k+1}t_{k+1}^2 }\lVert u_{k+1} - x^* \rVert^2 \right)  \le \\ 
    & F^{k}(x_{k}) - F^{k}(x^*) + \mu_{k}\frac{L_{g}^2}{2} + \frac{1}{\gamma_{k}t_{k}^2 }\lVert u_{k} - x^* \rVert^2  + \\
    &  \frac{1}{\gamma_{k+1}t_{k+1}^2 }\left(\gamma_{k+1} \mu_{k+1}\frac{L_{g}^2}{2}(t_{k+1}+\rho_{k+1})
    +t_{k+1}^2\gamma_{k+1}^2\left(\sigma^2 + \frac{\lVert K \rVert^2}{2}\right) + \gamma_{k+1}t_{k+1}^2\mu_{k+1}\frac{L_{g}^2}{2}\right).
     \end{aligned}
  \end{equation}
  Using the fact that $\gamma_{k+1}t_{k+1}^2 \ge \gamma_{k}t_{k}^2 $ we deduce that for every $k \geq 0$
  \begin{equation}
\begin{aligned}
   &  \E_{k}\left(F^{k+1}(x_{k+1}) - F^{k+1}(x^*) + \mu_{k+1}\frac{L_{g}^2}{2} + \frac{1}{\gamma_{k+1}t_{k+1}^2 }\lVert u_{k+1} - x^* \rVert^2 \right) \le \\ 
    & F^{k}(x_{k}) - F^{k}(x^*) + \mu_{k}\frac{L_{g}^2}{2} + \! \frac{1}{\gamma_{k}t_{k}^2 }\lVert u_{k} - x^* \rVert^2 + \! \frac{\mu_{k+1}}{t_{k+1}}\frac{L_{g}^2}{2}
  +\gamma_{k+1}\left(\sigma^2 + \frac{\lVert K \rVert^2}{2} + \frac{L_{g}^2}{2 \lVert K \rVert^2}\right).
  \end{aligned}
\end{equation}
  Plugging in our choice of parameters gives for every $k \geq 0$
  \begin{equation}
\begin{aligned}
    & \E_{k}\left(F^{k+1}(x_{k+1}) - F^{k+1}(x^*) + \mu_{k+1}\frac{L_{g}^2}{2} + \frac{1}{\gamma_{k+1}t_{k+1}^2 }\lVert u_{k+1} - x^* \rVert^2 \right) \le \\ 
    & F^{k}(x_{k}) - F^{k}(x^*) + \mu_{k}\frac{L_{g}^2}{2} + \frac{1}{\gamma_{k}t_{k}^2 }\lVert u_{k} - x^* \rVert^2
    +\frac{C}{k^{\frac32}},
  \end{aligned}
\end{equation}
where $C >0$.
 
Thus, by the famous Robbins-Siegmund Theorem (see~\cite[Theorem 1]{robbins-siegmund}) we get that ${(F^{k+1}(x_{k+1}) - F^{k+1}(x^*) + \mu_{k+1}\frac{L_{g}^2}{2})}_{k \geq 0}$ converges almost surely. In particular, from the convergence to $0$ in expectation we know that the almost sure limit must also be the constant zero.
\end{proof}

The formulation of the previous section can be used to deal e.g.\ with problems of the form
\begin{equation}
  \label{eq:finite-sum-concrete}
  \min_{x \in \H} f(x) + \sum_{i=1}^{m} g_{i}(K_{i}x)
\end{equation}
for $f:\H \to \overbar{\R}$ a proper, convex and lower semicontinuous function, $g_{i}:\G_{i} \to \R$ convex and $L_{g_i}$-Lipschitz continuous functions and $K_{i}: \H \to \G_{i}$ linear continuous operators for $i=1,\dots,m$.

Clearly one could consider
\begin{equation}
  \bm{K} := 
  \begin{cases}%
    \H \to \bigtimes_{i=1}^m \G_i \\
    x \mapsto \bigtimes_{i=1}^m K_i x
  \end{cases}
\end{equation}
and
\begin{equation}
  \bm{g} :=
  \begin{cases}
    \bigtimes_{i=1}^m \G_i \to \overbar{\R} \\
    \bigtimes_{i=1}^m y_i \mapsto \sum_{i=1}^m g_i(y_i).
  \end{cases}
\end{equation}
in order to reformulate the problem as
\begin{equation}
  \min_{x \in \H} f(x) + \bm{g}(\bm{K}x)
\end{equation}
and use Algorithm~\ref{alg:variable_smoothing_accelerated} together with the parameter choices described in Corollary~\ref{cor:param-choices-deterministic} on this. This results in the following algorithm.
\begin{algo}%
  Let $y_0 = x_0 \in \H, \mu_{1}=b \lVert \bm{K} \rVert$, for $b >0 $, and $t_1=1$. Consider the following iterative scheme
  \begin{equation}
    (\forall k \geq 1) \quad 
    \left\lfloor \begin{array}{l}
        \gamma_{k} = \frac{\sum_{i=1}^{m} \lVert K_i \rVert^2}{\mu_{k}}  \\
        x_{k} = \prox{\gamma_{k}f}{y_{k-1} - \gamma_{k} \sum_{i=1}^m K^*_i\prox{\frac{1}{\mu_{k}}g^*_i}{\frac{K_i y_{k-1}}{\mu_{k}}} } \\
        t_{k+1} = \sqrt{t_{k}^2 + 2t_{k}} \\
        y_{k} = x_{k} + \frac{t_{k}-1}{t_{k+1}}(x_{k} - x_{k-1}) \\
        \mu_{k+1} = \mu_{k} \frac{t_{k}^2}{t_{k+1}^2 - t_{k+1}}.
    \end{array}\right.
  \end{equation}
\end{algo}
 However, problem~\eqref{eq:finite-sum-concrete} also lends itself to be tackled via the stochastic version of our method, Algorithm~\ref{alg:variable_smoothing_stochastic}, by randomly choosing a subset of the summands. Together with the parameter choices described in Corollary~\ref{cor:param-choices-stochastic} which results in the following scheme.

\begin{algo}%
  Let $y_0 = x_0 \in \H, b >0$, and $t_1=1$. Consider the following iterative scheme
  \begin{equation}
    (\forall k \geq 1) \quad 
    \left\lfloor \begin{array}{l}
        \mu_{k} = b\sum_{i=1}^{m} \lVert K_i \rVert^2 k^{-\frac32} \\
        \gamma_{k} = b k^{-\frac32}  \\
        x_{k} = \prox{\gamma_{k} f}{y_{k-1} - \gamma_{k} \frac{\epsilon_{i,k}}{p_i} \sum_{i=1}^m K^*_i\prox{\frac{1}{\mu_{k}}g^*_i}{\frac{K_i y_{k-1}}{\mu_{k}}} } \\
      t_{k+1} = \frac{1 + \sqrt{1 + 4 t_{k}^2}}{2} \\
      y_{k} = x_{k} + \frac{t_{k}-1}{t_{k+1}}(x_{k} - x_{k-1}),
    \end{array}\right.
  \end{equation}
  with $\epsilon_{k} := (\epsilon_{1,k}, \epsilon_{2,k}, \dots, \epsilon_{m,k})$ a sequence of i.i.d., ${\{0,1\}}^m$ random variables and $p_i = \P[\epsilon_{i,1} = 1]$.
\end{algo}

\begin{remark}
  In theory Algorithm~\ref{alg:variable_smoothing_stochastic} could be used to treat more general stochastic problems than finite sums like~\eqref{eq:finite-sum-concrete}, but in this case it is not clear anymore how a gradient estimator can be found, so we do not discuss it here.
\end{remark}

\section{Numerical Examples}%
\label{sec:numerical_examples}

We will focus our numerical experiments on image processing problems. The examples are implemented in python using the operator discretization library (ODL)~\cite{odl}.
We define the discrete gradient operators $D_1$ and $D_2$ representing the discretized derivative in the first and second coordinate respectively, which we will need for the numerical examples.
Both map from $\R^{m \times n}$ to $\R^{m \times n}$ and are defined by
\begin{equation}
  {(D_1 u)}_{i,j} :=
  \begin{cases}%
    u_{i+1,j} - u_{i,j}  &   1 \le i < m, \\
    0                    &   \text{else,}
  \end{cases}
\end{equation}
and
\begin{equation}
  {(D_2 u)}_{i,j} :=
  \begin{cases}%
    u_{i,j+1} - u_{i,j}  &   1 \le j < m, \\
    0                    &   \text{else.}
  \end{cases}
\end{equation}
The operator norm of $D_1$ and $D_2$, respectively, is $2$ (where we equipped $\R^{m \times n}$ with the Frobenius norm).
This yields an operator norm of $\sqrt{8}$ for the total gradient $D := D_1 \times D_2$ as a map from $\R^{m \times n}$ to $\R^{m \times n} \times \R^{m \times n}$, see also~\cite{chambolle2004algorithm}.

We will compare our methods, i.e.\ the Variable Accelerated SmooThing (VAST) and its stochastic counterpart (sVAST) to the Primal Dual Hybrid Gradient (PDHG) of~\cite{pdhg} as well as its stochastic version (sPDHG) from~\cite{spdhg}. Furthermore, we will illustrate another competitor, the method by Pesquet and Repetti, see~\cite{pesquet_repetti}, which is another a stochastic version of PDHG (see also~\cite{vu}).

In all examples we choose the parameters in accordance with~\cite{spdhg}:
\begin{itemize}
  \item for PDHG and Pesquet\&Repetti: $\tau = \sigma_i = \frac{\gamma}{\lVert K \rVert}$
    \item for sPDHG:\ $\sigma_i = \frac{\gamma}{\lVert K \rVert}$ and $\tau = \frac{\gamma}{n \max_i \lVert K_i \rVert}$,
\end{itemize}
where $\gamma = 0.99$.

\subsection{Total Variation Denoising}%
\label{sub:total_variation_denoising}
The task at hand is to reconstruct an image from its noisy observation.
We do this by solving 
\begin{equation}
  \min_{x \in \R^{m \times n}{}} \alpha \lVert x - b  \rVert_2 + \lVert D_1 x \rVert_1 + \lVert D_2 x \rVert_1,
\end{equation}
with $\alpha >0$ as regularization parameter, in the following setting: $f= \alpha \lVert \cdot - b \rVert_2, g_1=g_2 = \lVert \cdot \rVert_1, K_1=D_1, K_2=D_2$. 

\begin{figure}
  \centering
  \begin{subfigure}[b]{0.3\linewidth}
    \centering
    \includegraphics[width=\linewidth]{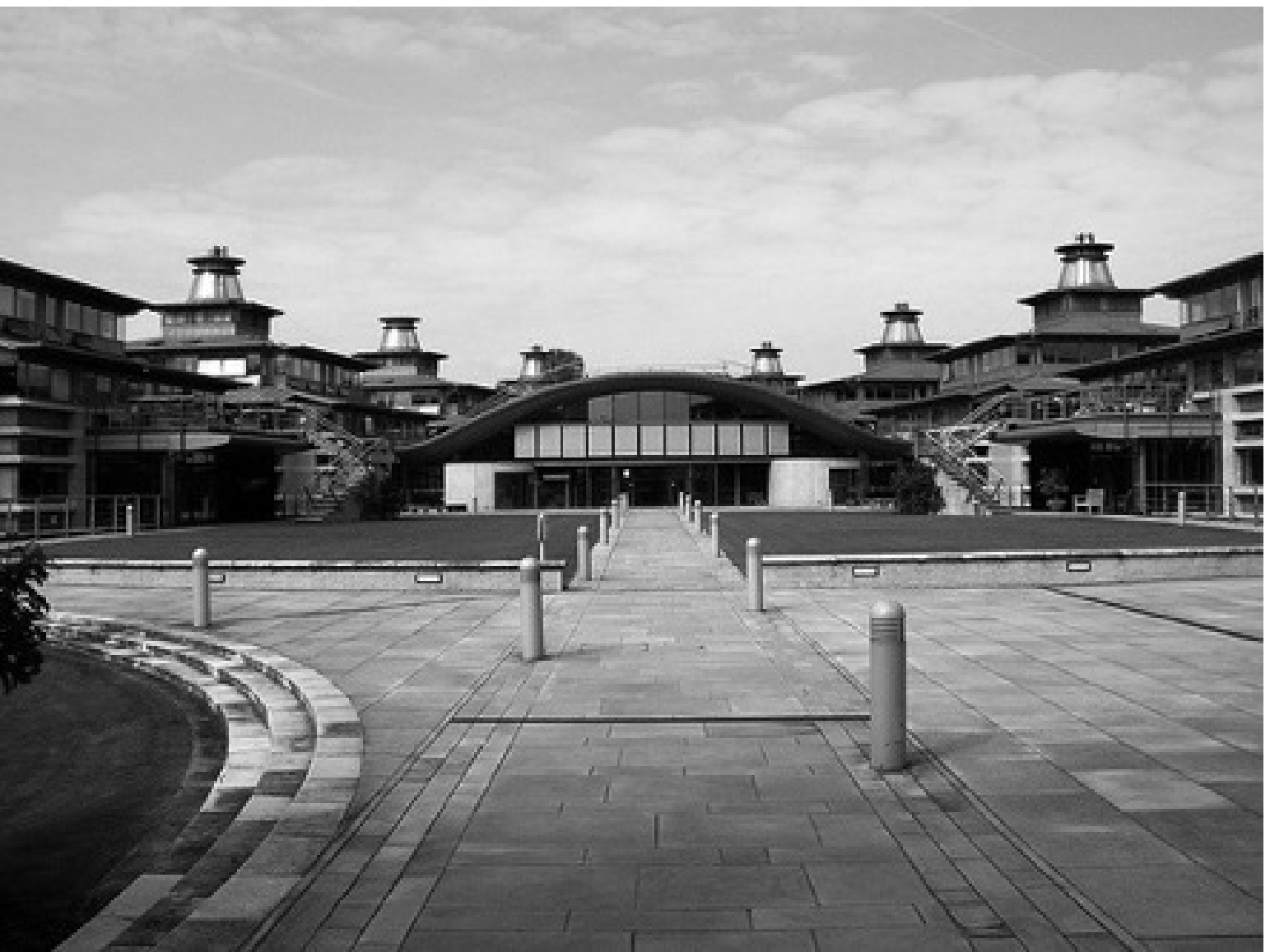}
    \caption{Groundtruth}
  \end{subfigure}
  \begin{subfigure}[b]{0.3\linewidth}
    \centering
    \includegraphics[width=\linewidth]{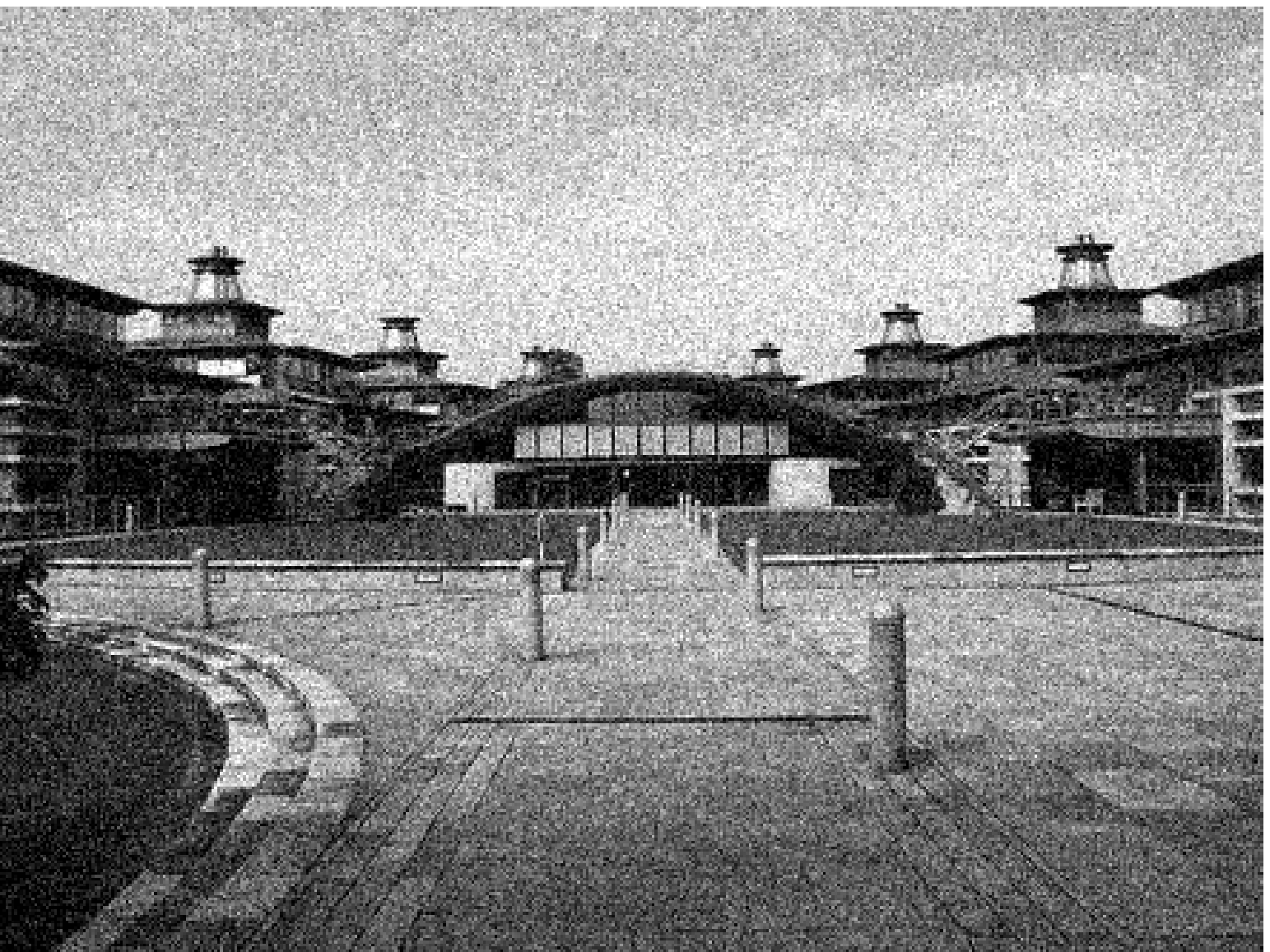}
    \caption{Data}
  \end{subfigure}
  \begin{subfigure}[b]{0.3\linewidth}
    \centering
    \includegraphics[width=\linewidth]{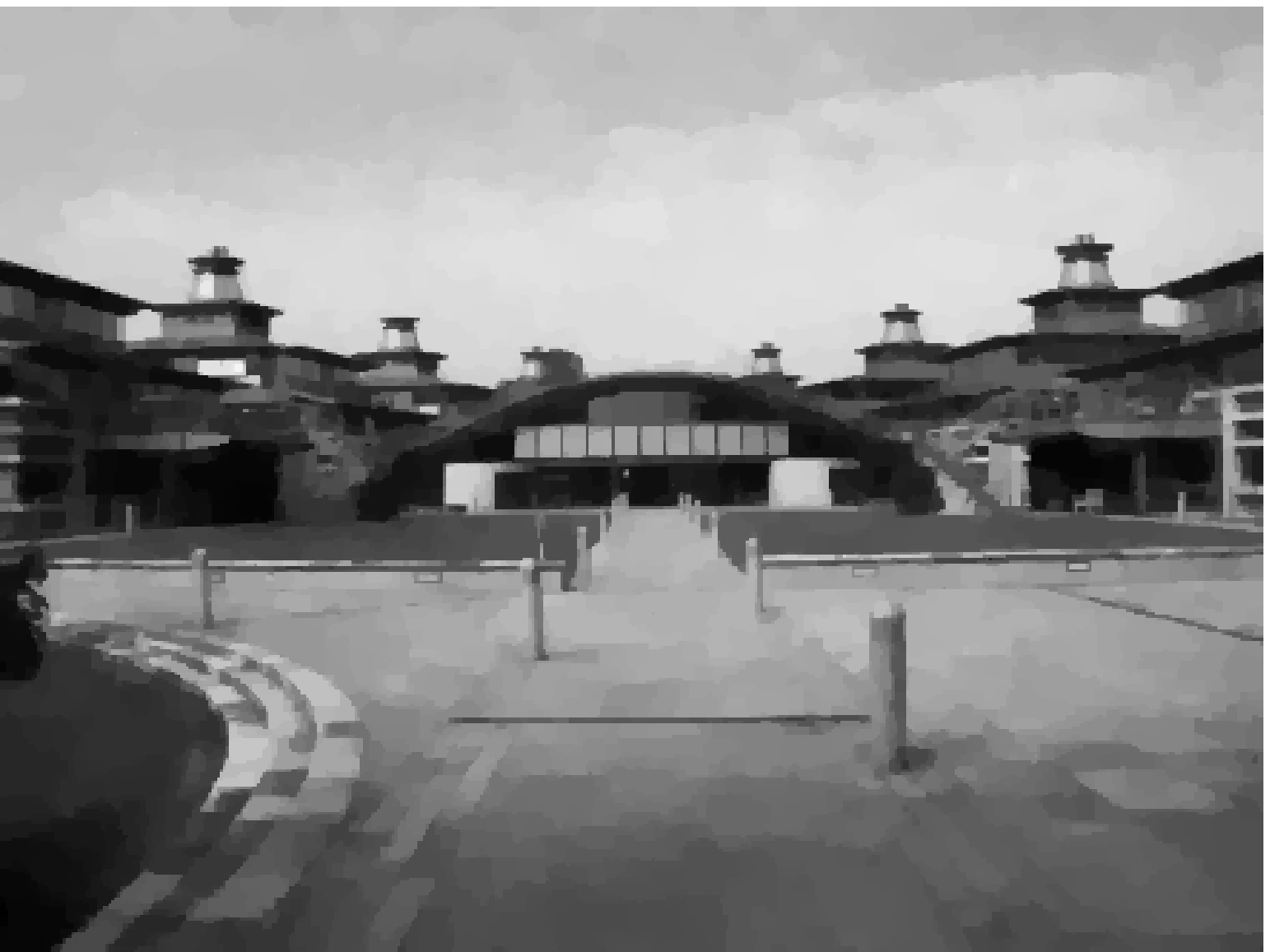}
    \caption{Approximate solution}
  \end{subfigure}
  \caption{TV denoising. Images used. The approximate solution is computed by running PDHG for 7000 iterations.}%
  \label{fig:images-used-denoising}
\end{figure}

Figure~\ref{fig:images-used-denoising} illustrates the images (of dimension $m = 442$ and $n = 331$) used in for this example. These include the groundtruth, i.e.\ the uncorrupted image, as well as the data for the optimization problem $b$, which visualizes the level of noise.
In Figure~\ref{fig:denoising-plots} we can see that for the deterministic setting our method is as good as PDHG.\ For the objective function values, Subfigure~\ref{fig:denoising-objfun}, this is not too surprising as both algorithms share the same convergence rate. For the distance to a solution however we completely lack a convergence result. Nevertheless in Subfigure~\ref{fig:denoising-dist} we can see that our method performs also well with respect to this measure.

\begin{figure}[]
  \centering
  \begin{subfigure}[b]{0.49\linewidth}%
    \includegraphics[width=\linewidth]{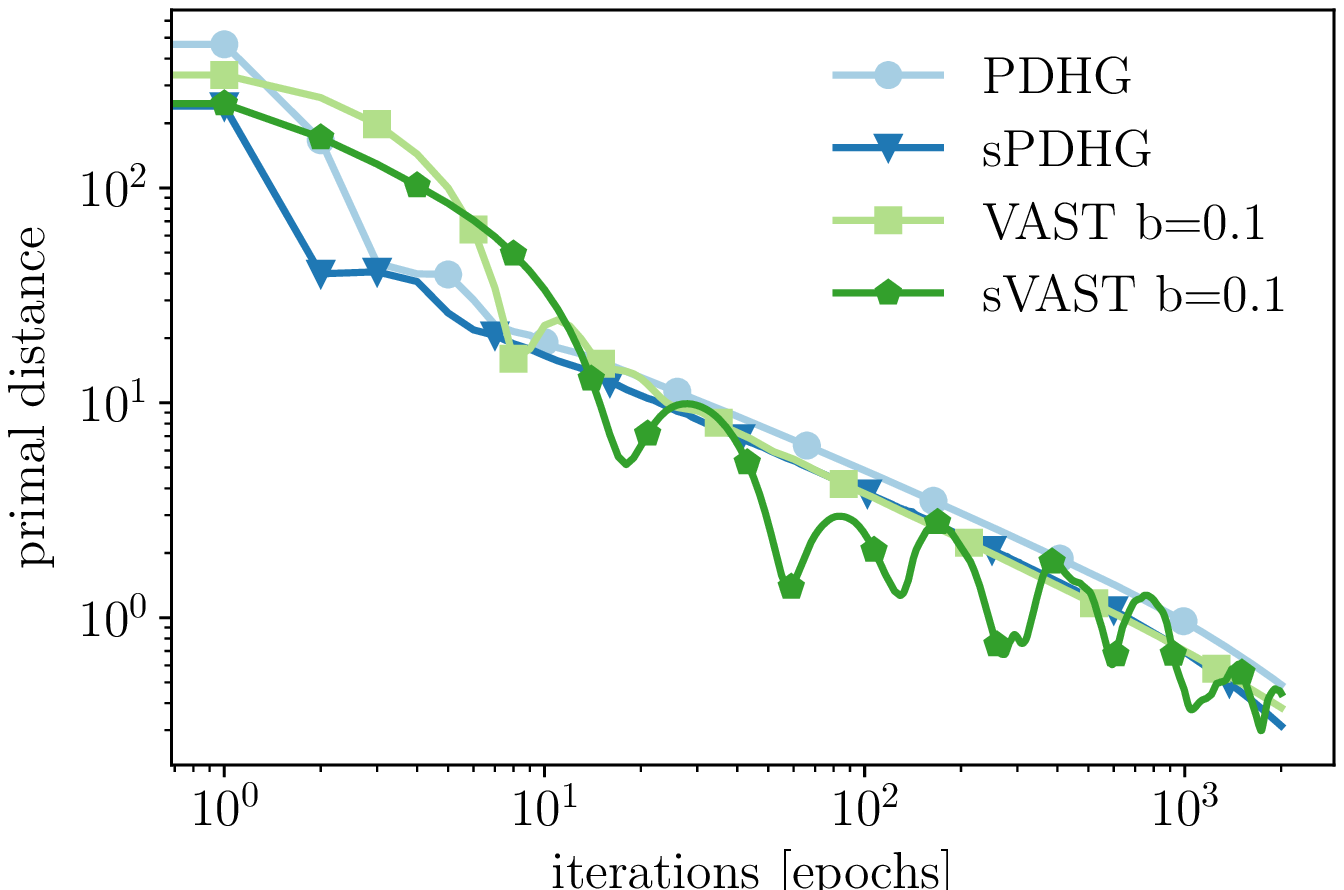}
    \caption{Distance to the solution.}%
    \label{fig:denoising-dist}
  \end{subfigure}
  \begin{subfigure}[b]{0.49\linewidth}%
    \includegraphics[width=\linewidth]{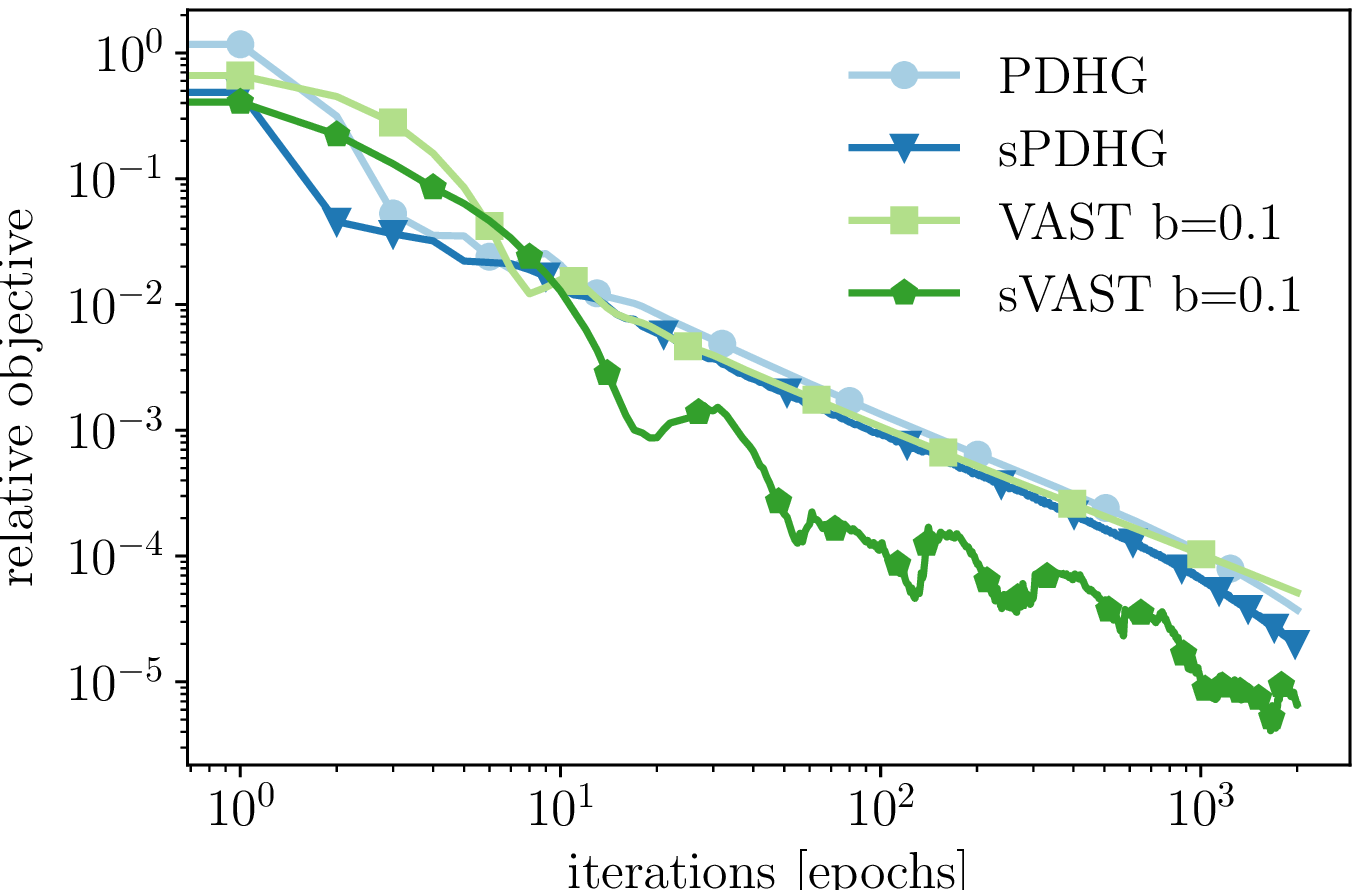}
    \caption{Relative objective $\frac{F(x_{k}) - F(x^*)}{F(x_0) - F(x^*)}$.}%
    \label{fig:denoising-objfun}
  \end{subfigure}
    \caption{TV denoising. Plots.}%
  \label{fig:denoising-plots}
\end{figure}

In the stochastic setting we can see in Figure~\ref{fig:denoising-plots} that, while sPDHG provides some benefit over its deterministic counterpart, the stochastic version of our method, although significantly increasing the variance, provides great benefit, at least for the objective function values.

Furthermore, Figure~\ref{fig:denoising-reconstruction}, shows the reconstructions of sPDHG and our method which are, despite the different objective function values, quite comparable.

\begin{figure}
  \centering
  \begin{subfigure}[b]{0.3\linewidth}
    \centering
    \includegraphics[width=\linewidth]{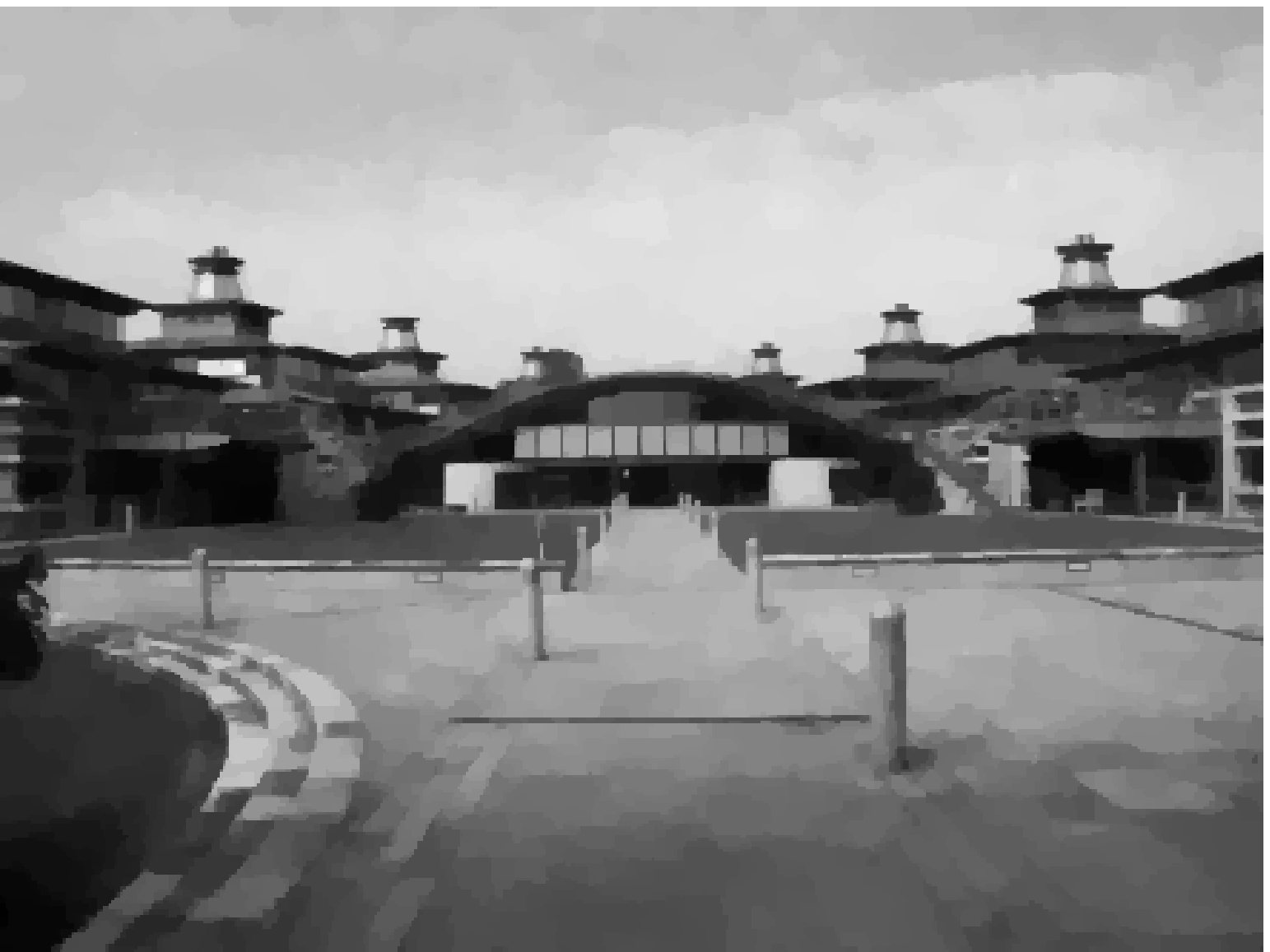}
    \caption{sVAST}
  \end{subfigure}
  \begin{subfigure}[b]{0.3\linewidth}
    \centering
    \includegraphics[width=\linewidth]{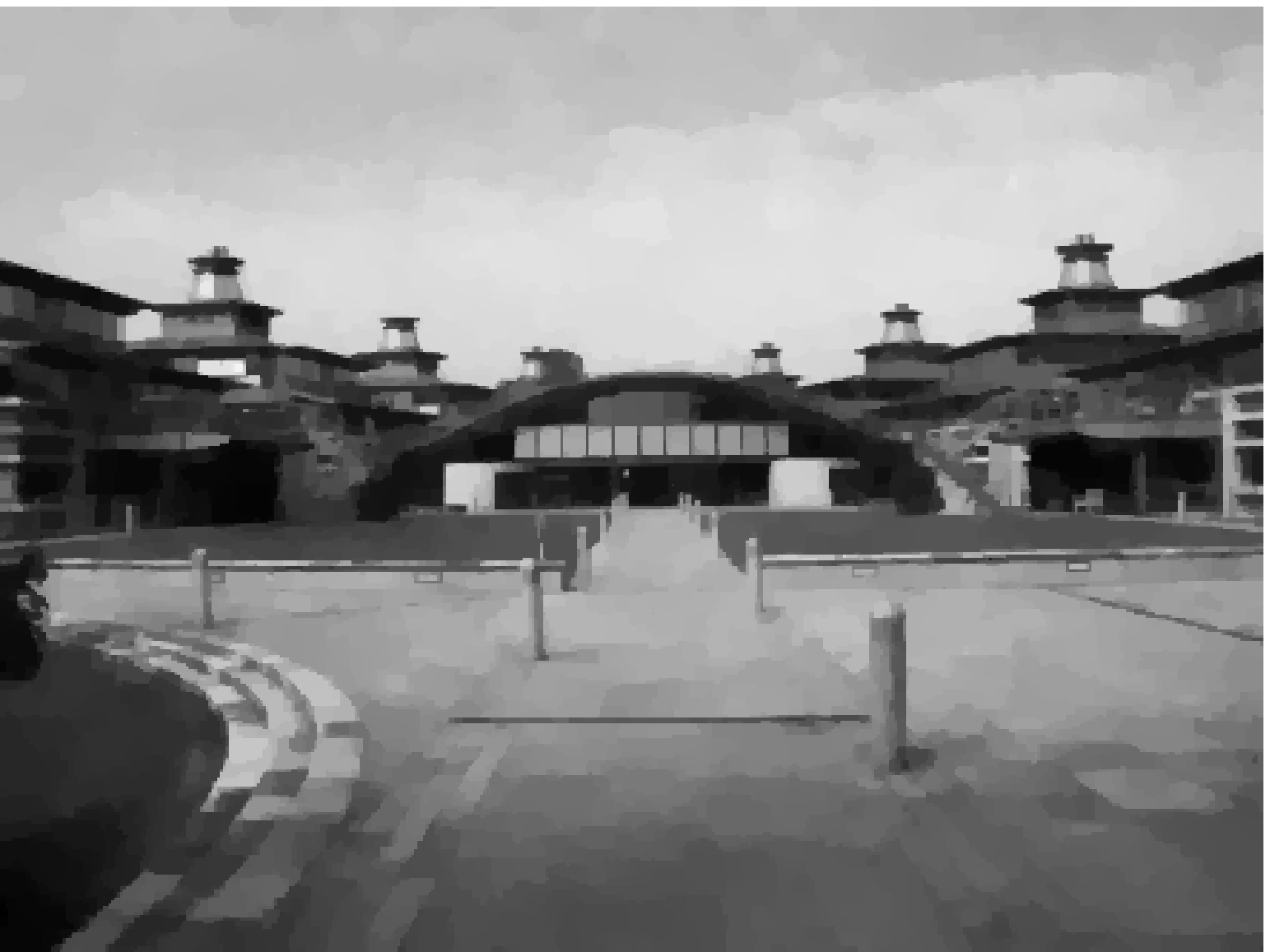}
    \caption{sPDHG}
  \end{subfigure}
  \caption{TV Denoising. A comparison of the reconstruction for the stochastic variable smoothing method and the stochastic PDHG.}%
  \label{fig:denoising-reconstruction}
\end{figure}

\subsection{Total Variation Deblurring}%
\label{sub:total_variation_deblurring}
For this example we want to reconstruct an image from a blurred and noisy image. We assume to know the blurring operator $C:  \R^{m \times n} \rightarrow  \R^{m \times n}$. This is done by solving
\begin{equation}
  \label{eq:deblurring-problem}
  \min_{x \in \R^{m \times n}{}}  \alpha \lVert C x - b  \rVert_2 + \lVert D_1 x \rVert_1 + \lVert D_2 x, \rVert_1,
\end{equation}
for $\alpha >0$ as regularization parameter, in the following setting: $f=0, g_1 = \alpha \lVert \cdot - b \rVert_2, g_2=g_3 = \lVert \cdot \rVert_1, K_1=C, K_2 =  D_1, K_2=D_2$.

Figure~\ref{fig:images-used-deblurring} shows the images used to set up the optimization problem~\eqref{eq:deblurring-problem}, in particular Subfigure~\ref{fig:deblurring-data} which corresponds to $b$ in said problem.

In Figure~\ref{fig:deblurring-plots} we see that while PDGH performs better in the deterministic setting, in particular in the later iteration, the stochastic variable smoothing method provides a significant improvement where sPDHG method seems not to converge. It is interesting to note that in this setting even the deterministic version of our algorithm exhibits a slightly chaotic behaviour. Although neither of the two methods is monotone in the primal objective function PDHG seems here much more stable.

\begin{figure}
  \centering
  \begin{subfigure}[b]{0.3\linewidth}
    \centering
    \includegraphics[width=\linewidth]{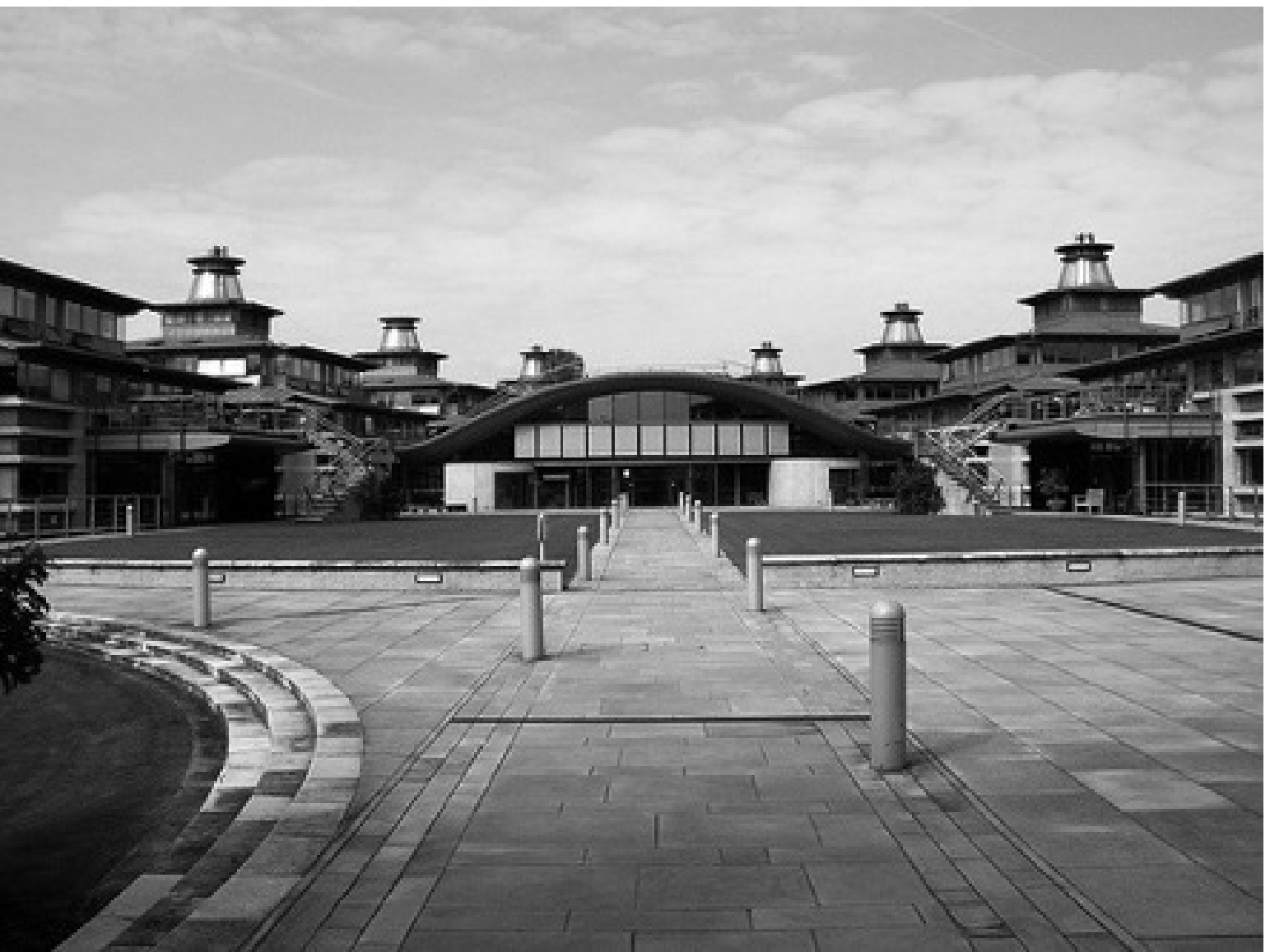}
    \caption{Groundtruth}
  \end{subfigure}
  \begin{subfigure}[b]{0.3\linewidth}
    \centering
    \includegraphics[width=\linewidth]{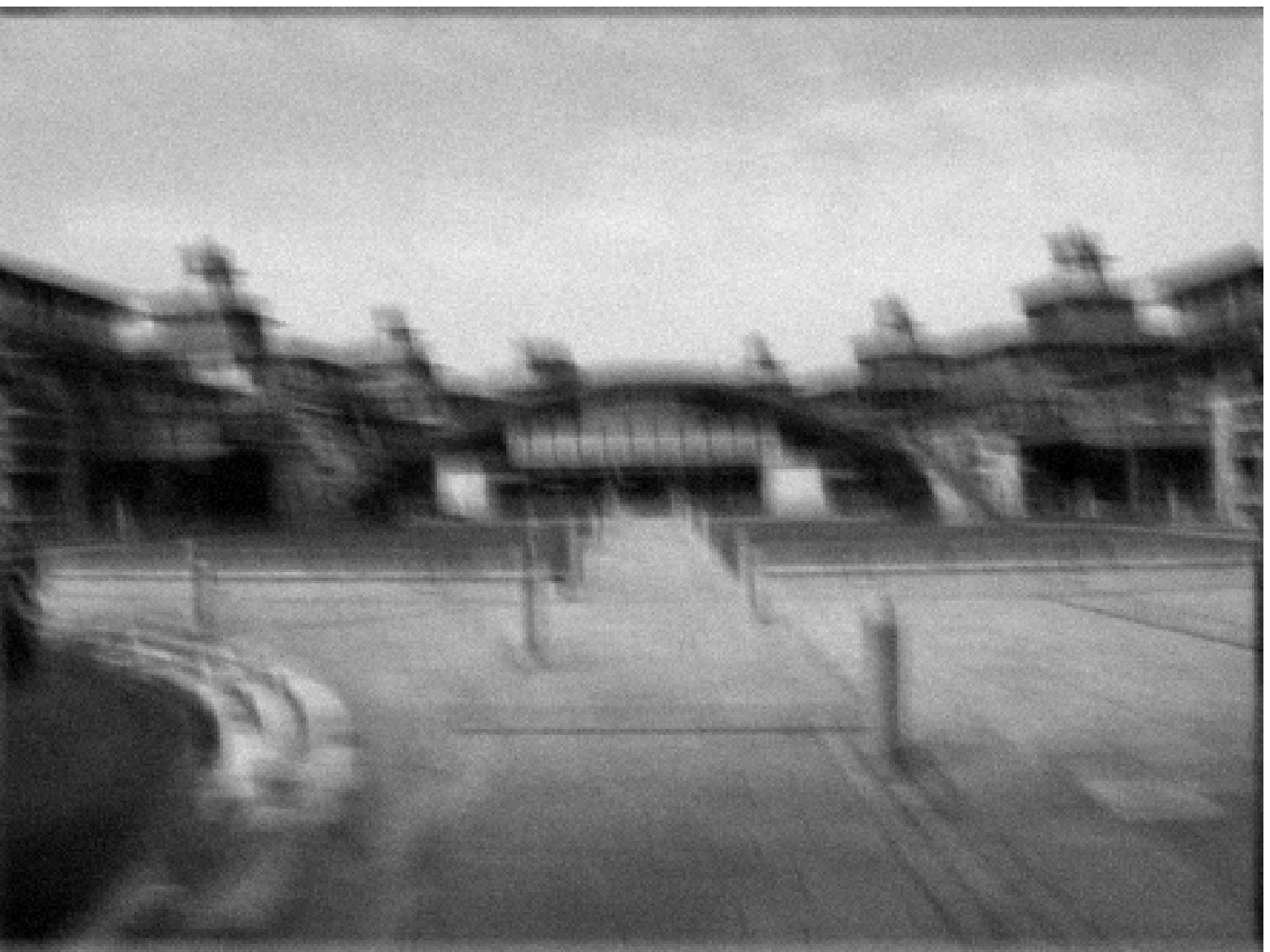}
    \caption{Data}%
    \label{fig:deblurring-data}
  \end{subfigure}
  \begin{subfigure}[b]{0.3\linewidth}
    \centering
    \includegraphics[width=\linewidth]{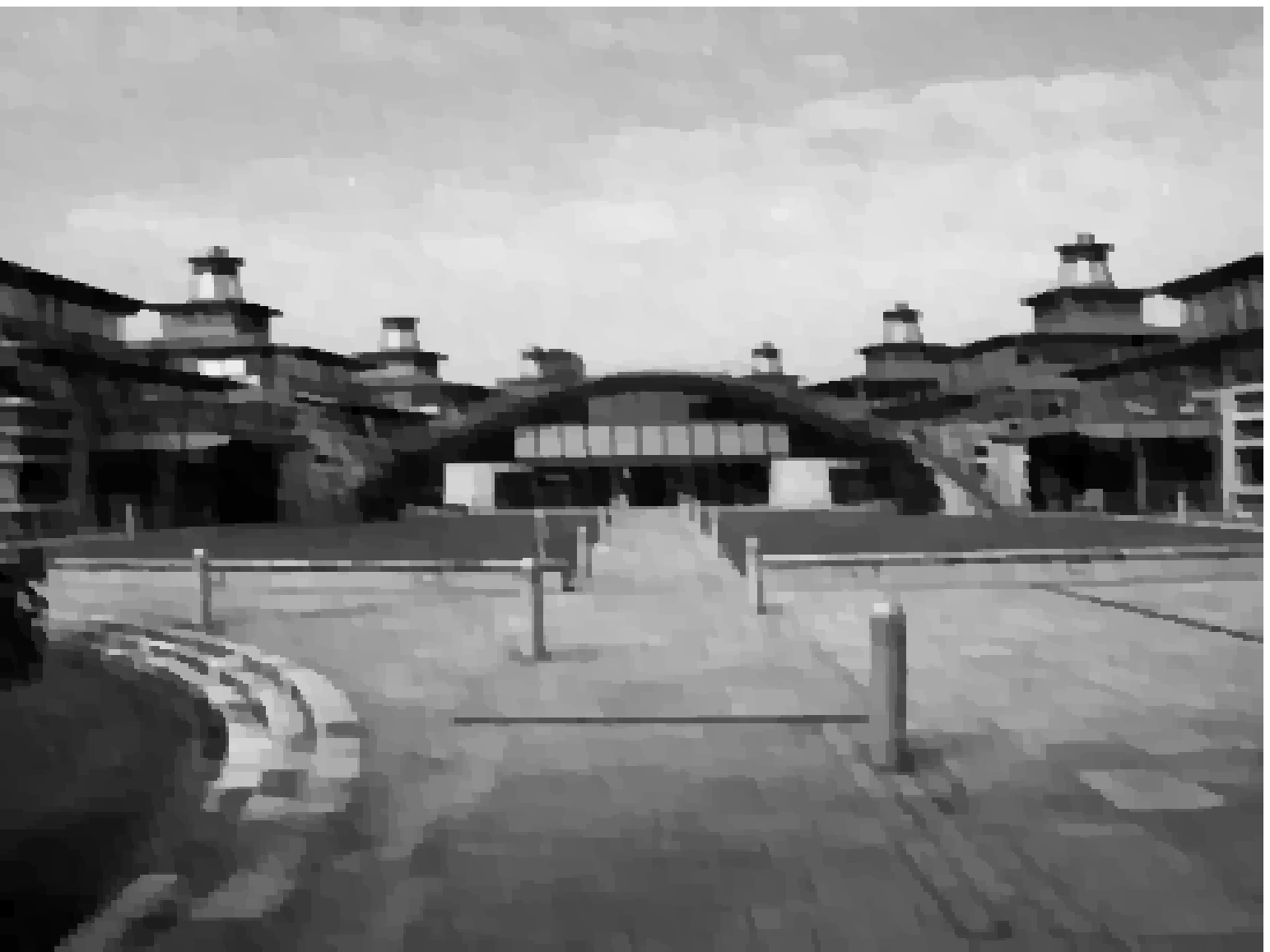}
    \caption{Approximate solution}
  \end{subfigure}
  \caption{TV Deblurring.The approximate solution is computed by running PDHG for 3000 iterations.}%
  \label{fig:images-used-deblurring}
\end{figure}

\begin{figure}[]%
  \label{fig:deblurring-funval}
  \centering
  \begin{subfigure}[b]{0.49\linewidth}
    \centering
    \includegraphics[width=\linewidth]{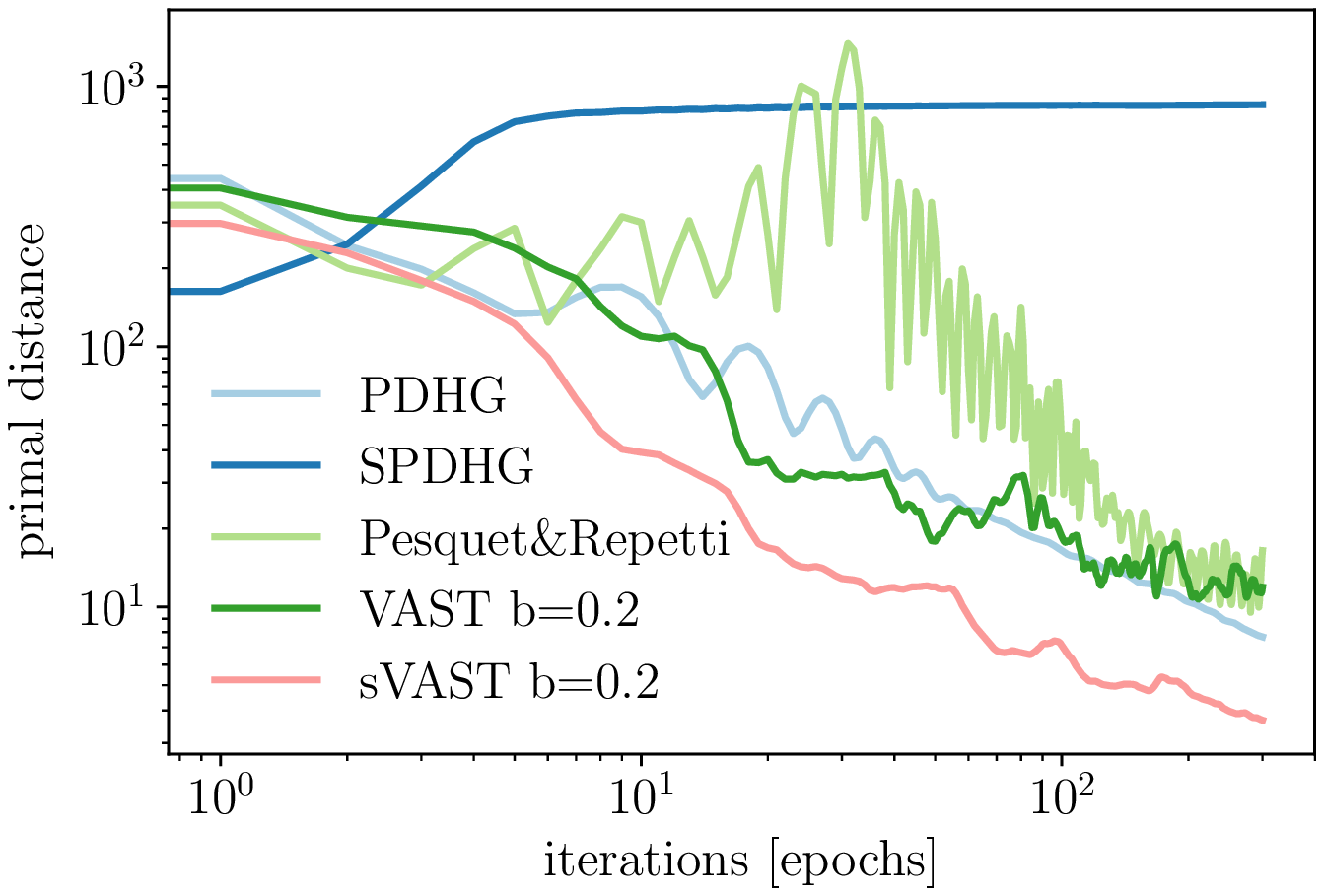}
    \caption{Distance to the solution.}%
    \label{fig:deblurring-dist}
  \end{subfigure}
  \begin{subfigure}[b]{0.49\linewidth}
    \centering
    \includegraphics[width=\linewidth]{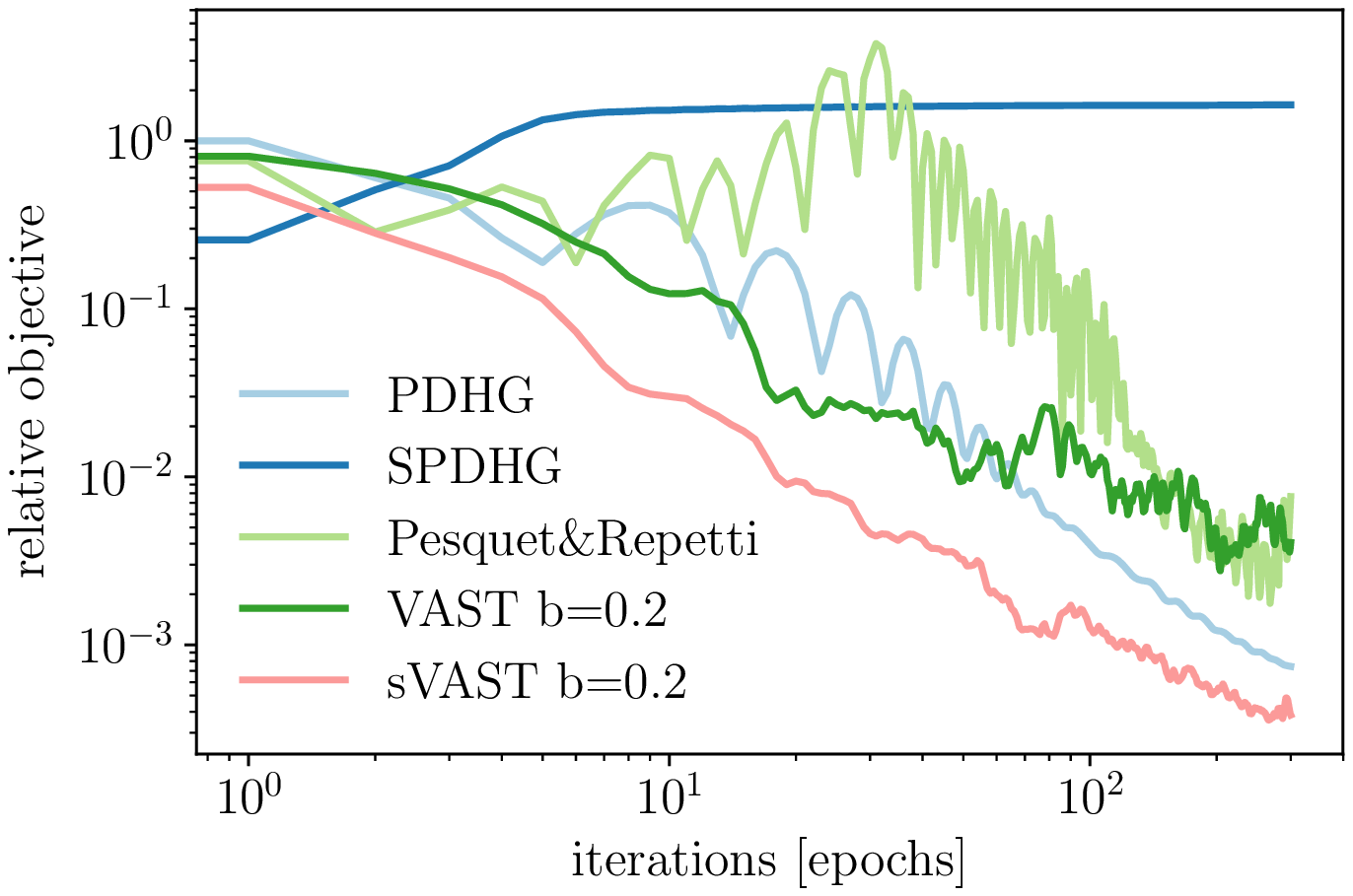}
    \caption{Relative objective $\frac{F(x_{k}) - F(x^*)}{F(x_0) - F(x^*)}$.}%
    \label{fig:deblurring-obfun}
  \end{subfigure}
  \caption{TV deblurring. Plots.}%
  \label{fig:deblurring-plots}
\end{figure}

\end{document}